\numberwithin{equation}{subsection}
\newcommand{\G}{\Gamma}
\newcommand{\g}{\gamma}
\newcommand{\sg}{\sigma}
\newcommand{\mc}{\mathbb{C}}
\newcommand{\mr}{\mathcal R}
\newcommand{\ca}{\curvearrowright}
\newcommand{\emm}{\mathcal{M}}
\newcommand{\enn}{\mathcal N}
\newcommand{\euu}{\mathcal{U}}
\newcommand{\El}{\mathcal{L}}
\newcommand{\Sg}{\Sigma}
\newcommand{\rar}{\rightarrow}
\newcommand{\La}{\Lambda}
\newcommand{\lam}{\lambda}
\newcommand{\bten}{\bar\otimes}
\newcommand{\De}{\Delta}
\newcommand{\tp}{\bar{\otimes}}
\newcommand{\Aa}{\mathcal A}
\newcommand{\Bee}{\mathcal B}
\newcommand{\cp}{\mathcal P}
\newcommand{\tilm}{\tilde{\mathcal M}}
\begin{document}
	\newtheorem{Lemma}{Lemma}
	\theoremstyle{plain}
	\newtheorem{theorem}{Theorem~}[section]
	\newtheorem{main}{Main Theorem~}
	\newtheorem{lemma}[theorem]{Lemma~}
	\newtheorem{assumption}[theorem]{Assumption~}
	\newtheorem{proposition}[theorem]{Proposition~}
	\newtheorem{corollary}[theorem]{Corollary~}
	\newtheorem{definition}[theorem]{Definition~}
	\newtheorem{defi}[theorem]{Definition~}
	\newtheorem{notation}[theorem]{Notation~}
	\newtheorem{example}[theorem]{Example~}
	\newtheorem*{remark}{Remark~}
	\newtheorem*{cor}{Corollary~}
	\newtheorem*{question}{Question}
	\newtheorem*{claim}{Claim}
	\newtheorem*{conjecture}{Conjecture~}
	\newtheorem*{fact}{Fact~}
	\newtheorem*{thma}{Theorem A}
	\newtheorem*{thmb}{Theorem B}
	\newtheorem*{thmc}{Theorem C}
	\newtheorem*{thmd}{Theorem D}
	\newtheorem*{thme}{Theorem E}
	\renewcommand{\proofname}{\bf Proof}
	\newcommand{\email}{Email: }

\title{Semidirect product rigidity of group von Neumann algebras arising from class $\mathscr{S}$, inductive limits and fundamental group}
\author{Sayan Das and Krishnendu Khan}
\date{}
\maketitle
\begin{abstract}
In this article we study property (T) groups arising from Rips construction in geometric group theory in the spirit of \cite{CDK19} and certain inductive limit groups from this class. Using interplay between Popa's deformation/rigidity and methods in geometric group theory we are able to extend the class of groups considered in \cite{CDK19} that remembers semidirect product features while passing to the group von Neumann algebras. Combining these results with the method developed in \cite{CDHK20} we are able to produce more examples of property (T) group factors with trivial fundamental group. The inductive limit groups do not have property (T) and provides examples of more factors with trivial fundamental group. We are also able to show Cartan rigidity for these groups.

\end{abstract}	
\thispagestyle{empty}

	\section{Introduction }
	A natural association of a von Neumann algebra, denoted by $\El(\G)$, to every countable discrete group $\G$ is due to Murray and von Neumann in \cite{MvN37,MvN43}. The group von Neumann algebra $\El(G)$ is defined as the bicommutant of the left regular representation of $\G$ inside the algebra of all bounded linear operators on the Hilbert space $\ell^2\G$. A central theme of study in von Neumann algebras is: what algebraic/geometric properties of group $\G$ are remembered by its group von Neumann algebra $\El(\G)$? Reconstruction of algebraic/geometric properties of groups from its groups von Neumann algebra is extremely difficult as group von Neumann algebras tend to forget quite a lot of group theoretic properties such as rank, torsion etc. For example, $\mathbb  S_{\infty}$ and $\mathbb Z\wr \mathbb Z$ give rise to same group von Neumann algebras. This example is a very special case of Connes' seminal work in \cite{Co76} which states that any two icc amenable group give rise to isomorphic group Von Neumann algebras. Hence any properties of amenable groups are not remembered by the corresponding von Neumann algebras other than amenability.
	
	However, when $\G$ is non-amenable the situation is far more complex and an outstanding progress has been achieved through Popa's deformation/rigidity theory \cite{Po07,Va10,Io12,Io18icm}. Using this new framework it was shown that various properties (algebraic/geometric) of groups and their representation can be completely recovered from their von Neumann algebras, \cite{OP03,OP07,IPV10,BV12,CdSS15,DHI16,CI17,CU18,CDK19,CDHK20,CD-AD20,CD-AD21,CIOS21}. The first success in this direction is due to Ioana, Popa and Vaes in \cite{IPV10} where the authors discovered the first examples of groups that are completely remembered by its group von Neumann algebras, i.e. ``\textit{W$^*$-superrigid groups}''. Since then there have been a huge development in this direction made in \cite{BV12,B13,CI17,CD-AD20,CD-AD21,CIOS21}.  
	
	In order to understand $W^*$-superrigidity, one first need to look at certain aspects of a group $\G$ that are preserved by the group von Neumann algebra $\El(\G)$. In particular we need to identify algebraic properties of a group $\G$ such that we can recover them for any mystery group $\La$ whenever $\El(\G)\cong\El(\La)$. For instance, the examples covered in \cite{IPV10,BV12,B13} and in \cite{CI17} the first step was to reconstruct the generalized wreath product for the unknown group and the amalgam structure respectively. In \cite{CDK19} the novelty was to reconstruct the semidirect product structure.  
	Hence identifying algebraic/geometric features of groups that passes through the von Neumann algebraic structure remain interesting in its own way. Other structural rigidity results were obtain in \cite{CdSS15,CI17,DHI16,CU18}.
	
	A striking conjecture of Connes predicts that all icc property (T) groups are $W^*$-superrigid. Despite having much progress in the direction of $W^*$-rigidity, no example of a property (T) $W^*$-superrigid group is known till date. The first evidence towards this conjecture was found in \cite{CH89}, where it was shown that for different values of $n$, the uniform lattices in $Sp(n,1)$ have non isomorphic von Neumann algebras. It was later generalized in \cite{OP03} in terms of products. The known structural results about property (T) groups beside the aforementioned works are due to \cite{CdSS15,CU18}. A recent progress in that direction was made in \cite{CDK19}, where it was shown that for certain classes of property (T) groups, arising from some groups theoretic Rips constructions, it is possible to reconstruct the semidirect product feature. A striking progress has been made very recently in \cite{CIOS21} by I. Chifan, A. Ioana, D. Osin and B. sun, where the authors have constructed the first examples of an uncountable family of property (T), W$^*$-superrigid groups. 
	
	In this paper we made progress on this direction by extending the semidirect product rigidity results considered in \cite{CDK19}for a larger classes of groups. 
	It was shown in \cite{BO06} that for any finitely generated group $Q$, one can find a property (T) group $N$ such that $Q\hookrightarrow Out(N)$ as a finite index subgroup and the semidirect product group $N\rtimes_{\sg} Q$ with respect to the canonical action $Q\ca^{\sg}N$ is hyperbolic relative to $Q$. For torsion free $Q$ one can choose $N$ to be torsion free as well and if $Q$ has property (T) then $N\rtimes_{\sg}Q$ also has property (T). With all the assumption we denote the classes of groups $N\rtimes_{\sg}Q$ by $Rips_T(Q)$. These classes of groups were first considered and structural rigidity of the associated group von Neumann algebras of these groups has been studied extensively in \cite{CDK19}. 
	
	The first main result of this paper concerns canonical fiber products of groups in $Rips_T(Q)$. In specific, for any $m\geq 2$, consider groups $N_i\rtimes_{\sg_i}Q\in Rips(Q)$  for all $i\in\overline{1,m}$ and the canonical fiber product $\G=(N_1\times\cdots\times N_m)\rtimes_{\sg} Q$ where the action $\sg$ is the diagonal action of $Q$ on $N_1\times\cdots\times N_m$. We show that the semidirect product feature of the group $\G$ survives the passage to the group von Neumann algebraic regime. More precisely we have the following:

	\begin{thma}[Theorem~\ref{semidirectprodreconstruction}] Let $Q=Q_1\times Q_2\times \cdots\times Q_n$, where $n\geq 2$, $Q_i$ are icc, torsion free, biexact, property (T), weakly amenable, residually finite groups. For $i=1,2,\cdots,m$ let $\G=(N_1\times N_2\times\cdots\times N_m)\rtimes_{\sigma} Q$, the semidirect product associated with the diagonal action $\sigma=\sigma_1 \times \sigma_2\times \cdots\times\sigma_m :Q\ca N_1\times N_2\times\cdots\times N_m $. Denote by $\emm=\El(\G)$ be the corresponding II$_1$ factor. Assume that $\Lambda$ is any arbitrary group and $\Theta: \El(\Gamma)\rar \El(\Lambda)$ is any $\ast$-isomorphism. Then there exist groups action by automorphisms $H\ca^{\tau_i} K_i$  such that $\La= (K_1\times K_2\times \cdots\times K_m) \rtimes_{\tau} H$ where $\tau=\tau_1\times \tau_2\times\cdots\times \tau_m: H\ca K_1\times K_2\times\cdots\times K_m$ is the diagonal action. Moreover one can find a multiplicative character $\eta:Q\rar \mathbb T$, a group isomorphism $\delta: Q\rar H$, a unitary $w\in \El(\Lambda)$, and $\ast$-isomorphisms $\Theta_i: \El(N_i)\rar \El(K_i)$ such that for all $x_i \in L(N_i)$ and $g\in Q$ we have 
		\begin{equation}
			\Theta((x_1\otimes x_2\otimes\cdots\otimes x_m) u_g)= \eta(g) w ((\Theta_1(x_1)\otimes \Theta_2(x_2)\otimes\cdots\otimes(\Theta_m(x_m))) v_{\delta(g)})w^*.
		\end{equation}      
		Here $\{u_g \,|\,g\in Q\}$ and $\{v_h\,|\, h\in H\}$ are the canonical unitaries implementing the actions of the group $Q$ on $ \El(N_1)\bar\otimes \El(N_2)\bar{\otimes}\cdots\bar{\otimes}\El(N_m)$ and $H$ on $ \El(K_1)\bar\otimes \El(K_2)\bar{\otimes}\cdots\bar{\otimes}\El(K_m)$, respectively.
	\end{thma}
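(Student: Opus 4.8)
The plan is to work inside the single II$_1$ factor $\mathcal{M}=\mathcal{L}(\Gamma)=\mathcal{L}(\Lambda)$, exploiting the two group-measure structures at once. From the $\Gamma$-side we know the regular, normal inclusion $\mathcal{A}:=\mathcal{L}(N_1)\,\bar\otimes\cdots\bar\otimes\,\mathcal{L}(N_m)\subseteq\mathcal{M}$ coming from the normal subgroup $N=N_1\times\cdots\times N_m\trianglelefteq\Gamma$, together with the crossed-product decomposition $\mathcal{M}=\mathcal{A}\rtimes Q$ and the rigid subalgebra $\mathcal{L}(Q)$ (rigid since $Q$ has property (T)). The goal is to show that $\Lambda$ admits a matching normal subgroup $K\trianglelefteq\Lambda$ with $\mathcal{L}(K)$ unitarily conjugate to $\mathcal{A}$, that $K$ further splits as $K_1\times\cdots\times K_m$, and that $\Lambda/K\cong Q$ acts diagonally. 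The backbone is Popa's intertwining-by-bimodules combined with the IPV comultiplication $\Delta\colon\mathcal{L}(\Lambda)\to\mathcal{L}(\Lambda)\,\bar\otimes\,\mathcal{L}(\Lambda)$, $\Delta(v_\lambda)=v_\lambda\otimes v_\lambda$ (see \cite{IPV10}), which is the standard device for upgrading subalgebra statements into statements about subgroups of $\Lambda$.

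First I would isolate the acting algebra. Each $N_i\rtimes_{\sigma_i}Q$ is a Rips fiber, hence hyperbolic relative to $Q$; after verifying that the diagonal fiber product $\Gamma$ remains hyperbolic relative to $Q$, this relative hyperbolicity furnishes an unbounded quasi-cocycle and thereby an s-malleable deformation $(\theta_t)$ of $\mathcal{M}$. Property (T) of $Q$ forces $\mathcal{L}(Q)$ to be rigid under $(\theta_t)$, and a spectral-gap/transversality argument then pins down the relative position of $\Theta(\mathcal{L}(Q))$ and of $\mathcal{A}=\mathcal{L}(N)$ through the usual dichotomy (either $\mathcal{A}$ intertwines into a peripheral piece or the deformation converges uniformly on it). Here the biexactness and weak amenability of the factors $Q_i$ enter decisively: via Ozawa-type solidity and Ozawa--Popa strong-solidity control of normalizers \cite{OP03}, one upgrades intertwining of corners to genuine unitary conjugacy and rules out spurious Cartan subalgebras, showing that $\mathcal{A}$ is, up to unitary conjugacy, a normal subalgebra of $\mathcal{M}$ whose normalizer generates everything, with quotient carrying the property (T) product group $Q$ (this is where $n\ge 2$ is used, through unique prime factorization of $\mathcal{L}(Q)$).

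Next I would group-ify. Feeding these positions into $\Delta$ and applying Popa/IPV rigidity to $\Delta(\mathcal{A})$ and $\Delta(\mathcal{L}(Q))$, I would extract a normal subgroup $K\trianglelefteq\Lambda$ with $\mathcal{L}(K)=w\mathcal{A}w^*$ for a unitary $w\in\mathcal{L}(\Lambda)$, and identify $H:=\Lambda/K$ with $\mathcal{L}(H)=w\mathcal{L}(Q)w^*$; the scalar ambiguity intrinsic to the comultiplication computation is precisely what produces the multiplicative character $\eta\colon Q\to\mathbb{T}$, while matching the quotient group algebras yields the isomorphism $\delta\colon Q\to H$. To split $K=K_1\times\cdots\times K_m$ I would invoke unique prime factorization for $\mathcal{A}=\mathcal{L}(N_1)\,\bar\otimes\cdots\bar\otimes\,\mathcal{L}(N_m)$: each $\mathcal{L}(N_i)$ is a property (T), prime II$_1$ factor (the $N_i$ being icc Rips fibers), so the tensor decomposition is essentially unique and transports to a direct-product decomposition $K=K_1\times\cdots\times K_m$ with $\mathcal{L}(K_i)=w\,\Theta_i(\mathcal{L}(N_i))\,w^*$ after relabeling, compatibly with $H$ acting diagonally because $\sigma$ was diagonal.

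Finally, tracking $w$, $\eta$, $\delta$ and the fiberwise isomorphisms $\Theta_i$ through the above steps yields the displayed conjugacy
\[
\Theta\big((x_1\otimes\cdots\otimes x_m)u_g\big)=\eta(g)\,w\,\big((\Theta_1(x_1)\otimes\cdots\otimes\Theta_m(x_m))\,v_{\delta(g)}\big)\,w^*,
\]
with the diagonal action $\tau$ reconstructed from $\sigma$. The hard part, I expect, is the simultaneous control of the two product structures: using relative hyperbolicity to cleanly separate the core $\mathcal{L}(N)$ from the acting $\mathcal{L}(Q)$ while at the same time using biexactness to split $N$ into $\prod N_i$ and $Q$ into $\prod Q_i$ without the two mechanisms interfering. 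Concretely, the delicate point is establishing that the recovered normal subalgebra is genuinely group-like — a group von Neumann algebra of a normal subgroup of $\Lambda$ — and that its prime factors line up with the $N_i$ consistently with the diagonal action; combining the deformation/rigidity input with the geometric-group-theoretic input so that this ``rigidity of the fiber-product architecture'' survives is the technically heaviest part.
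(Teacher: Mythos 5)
Your overall architecture --- comultiplication along $\La$, Popa intertwining to locate $\El(N)$ and $\El(Q)$, then group-ification to get $\La=K\rtimes H$ --- does match the paper's strategy, but two of your key steps fail as stated. First, the premise that ``the diagonal fiber product $\G$ remains hyperbolic relative to $Q$'' is false for $m\geq 2$: $\G$ contains the direct product $N_1\times\cdots\times N_m$ of infinite normal subgroups not conjugate into the peripheral $Q$, which is incompatible with relative hyperbolicity, so there is no quasi-cocycle/s-malleable deformation of $\emm$ of the kind you describe. The paper's Theorems \ref{controlprodpropt1}--\ref{commutationcontrolincommultiplication} instead embed $\G$ into the genuine product $\G_1\times\cdots\times\G_m$ of relatively hyperbolic groups and apply the location theorem for \emph{commuting property (T) subalgebras in products} of relatively hyperbolic group factors (from \cite{CDK19}), then pull the intertwining back into $\emm\bar\otimes\emm$ via the lemmas controlling intertwining for subgroups; your separation of $\El(N)$ from $\El(Q)$ has to go through this product picture, not through a deformation of $\emm$ itself.

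Second, your splitting of $K$ into $K_1\times\cdots\times K_m$ via unique prime factorization of $\mathcal A=\El(N_1)\bar\otimes\cdots\bar\otimes\El(N_m)$ does not work. The primeness of each $\El(N_i)$ is neither established in the paper nor available from the hypotheses (the $N_i$ are merely property (T) Rips kernels; they are not assumed biexact or weakly amenable, which is what UPF theorems in the style of \cite{OP03} require), and even granting primeness, UPF only identifies the tensor factors up to amplification $t$ and $1/t$ --- it gives no direct product decomposition of the \emph{group} $K$. The paper's mechanism is different and essentially unavoidable here: it re-applies the comultiplication to each $\mathcal A_j=u\El(\widehat N_j)u^*$, uses the claim $\Delta(\El(\widehat N_j))\prec^s\El(N)\bar\otimes\El(\widehat N_i)$ together with the finite-set/Fourier-support argument of \cite[Claim 4.5]{CU18} to show $\Delta(\mathcal A_j)\subseteq\El(\Sg)\bar\otimes\mathcal A_j$, deduces via \cite[Lemma 2.8]{CDK19} that $\mathcal A_j=\El(\Sg_j)$ for a subgroup $\Sg_j<\Sg$ (ruling out the case $i\neq j$ by orthogonality), and then computes $\El(\Sg_j)'\cap\El(\Sg)=\El(vC_{\Sg}(\Sg_j))$ to split $\Sg$ as a direct product following \cite{CdSS17}. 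Without this second pass through the comultiplication and the virtual-centralizer computation, the ``group-likeness'' of the tensor factors --- which you yourself flag as the delicate point --- is exactly what is missing from your argument.
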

   We denote the class of groups that is considered in the above theorem, by \textbf{class} $\mathscr S_{m,n}$. Note that the \textbf{class}$\mathscr S_{2,2}$ is precisely \textbf{class}$\mathscr S$ as defined in \cite{CDK19}.
   
   Our next result concerns about the inductive limit of groups from \textbf{class}$\mathscr S_{m,n}$. Specifically, consider $\G_m:=(N_1\times\cdots\times N_m)\rtimes Q\in \mathscr S_{m,n}$ and consider the limit group $\G =\underset{\rightarrow}\lim \G_m$. We denote this class of groups by \textbf{class} $\mathscr S_{\infty,n}$. For fairly large family of $Q$, we showed that the semidirect product feature of this limit group can be also recovered and more interestingly the acting group can be completely determined. The precise theorem is following:   
	\begin{thmb}[Theorem~\ref{semidirect-S_inftystructure}]
	Let $\G=N\rtimes Q\in\mathscr{S}_{\infty,n}$, $\La$ be any group and $\Theta:\El(\G)\rightarrow \El(\La)$ be a $\ast$-isomorphism. Then the group $\La$ admits a semidirect product decomposition  $\La=\Sg\rtimes \Phi $ satisfying the following properties: There is a group isomorphism $\delta: Q\rightarrow \Phi$, a character $\zeta:Q\rightarrow \mathbb T$, a $\ast$-homomorphism $\Theta_0:\El(N)\rightarrow \El(\Sg)$ and a unitary $w\in \El(\La)$ such that for every $x\in\El(N)$ and $\g\in Q$ we have 
	$$\Theta(xu_{\g})=\zeta(\g)w\Theta_0(x)v_{\delta(\g)}w^{\ast} .$$
	Where $\{ u_{\g}|\g\in Q\}$ and $\{v_{\lambda}|\lambda\in \Phi\}$ are the canonical unitaries of of $\El(Q)$ and $\El(\Phi)$ respectively.
\end{thmb}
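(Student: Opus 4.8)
The plan is to establish the theorem by reducing to the finite-stage rigidity of Theorem~\ref{semidirectprodreconstruction} and then organizing the stagewise data into a single semidirect product decomposition of $\La$. Write $\G=N\rtimes_\sg Q$ with core $N=\bigoplus_{i\ge1}N_i$, so that $\G=\underset{\rightarrow}\lim\G_m$ and $\emm:=\El(\G)$ is the weak closure of the increasing union $\bigcup_m\El(\G_m)$, where $\G_m=(N_1\times\cdots\times N_m)\rtimes_\sg Q\in\mathscr S_{m,n}$. The decisive new feature, absent from Theorem~\ref{semidirectprodreconstruction}, is that $\G$ has no property~(T): an infinite restricted product of infinite property~(T) groups is never property~(T), so the spectral-gap and deformation/rigidity inputs that drive the finite-stage theorem are unavailable for $\emm$ itself. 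I would therefore never argue with $\emm$ as a whole, only with the rigid exhausting subalgebras $\El(\G_m)$, and then pass to the limit.

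First I would recover the acting group. The subgroup $Q\le\G$ is common to all stages and carries exactly the rigid features exploited in Theorem~\ref{semidirectprodreconstruction}: it is icc, biexact, weakly amenable, residually finite, property~(T), and, crucially, has the genuine product decomposition $Q=Q_1\times\cdots\times Q_n$ with $n\ge2$. Transporting through $\Theta$ and adapting the product/biexactness and property~(T) analysis behind Theorem~\ref{semidirectprodreconstruction} to the position of $\Theta(\El(Q))$ inside $\El(\La)$, I expect to produce a subgroup $\Phi\le\La$, a group isomorphism $\delta:Q\rar\Phi$, a character $\zeta:Q\rar\mathbb T$ and a unitary $w\in\El(\La)$ with $\Theta(u_\g)=\zeta(\g)\,w\,v_{\delta(\g)}\,w^\ast$ for all $\g\in Q$. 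Because $Q$ embeds compatibly into every $\G_m$, the datum $(\Phi,\delta,\zeta)$ obtained at stage $m$ must be taken independent of $m$; checking this stability, and that the conjugating unitaries can be chosen coherently, is the first delicate step.

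Next I would recover the core and the complement. Running the analysis of Theorem~\ref{semidirectprodreconstruction} at each stage, applied to the inclusions $\Theta(\El(\G_m))\subseteq\El(\La)$, identifies $w^\ast\,\Theta(\El(N_1\times\cdots\times N_m))\,w$ with a tensor product $\El(K_1^{(m)})\bten\cdots\bten\El(K_m^{(m)})$ for subgroups $K_i^{(m)}\le\La$ that assemble compatibly as $m$ grows. Passing to the weak closure of the increasing union, I would show $w^\ast\,\Theta(\El(N))\,w=\El(\Sg)$ for a subgroup $\Sg\le\La$. Conjugating the normality relation $u_\g\,x\,u_\g^\ast=\sg_\g(x)$ through $\Theta$ shows that $\{v_{\delta(\g)}\}$ normalizes $\El(\Sg)$, hence $\Phi$ normalizes $\Sg$; together with $\Phi\cap\Sg=\{e\}$ (forced by injectivity of $\delta$ and an IPV-type height/intertwining estimate) and with the fact that $\El(\Sg)$ and $\{v_{\delta(\g)}\}$ generate $\El(\La)$, this yields the group-level splitting $\La=\Sg\rtimes\Phi$. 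Setting $\Theta_0=\mathrm{Ad}(w^\ast)\circ\Theta|_{\El(N)}$ then gives the displayed intertwining identity $\Theta(xu_\g)=\zeta(\g)\,w\,\Theta_0(x)\,v_{\delta(\g)}\,w^\ast$.

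The main obstacle is exactly the passage to the limit. In the property~(T) setting there is a single rigid object to analyze, whereas here one must run the finite-stage reconstruction for every $m$ and then prove that the reconstructed data are mutually compatible and converge. Concretely, the hard part is to choose the unitaries $w_m$ and the subgroups $\Phi_m,\Sg_m$ coherently, so that $\Phi_m$ stabilizes to one $\Phi\cong Q$, the $\Sg_m$ form an increasing chain whose union generates a genuine group subalgebra $\El(\Sg)$, and a single unitary $w$ implements all stagewise identifications at once. The lever that makes this coherence possible is uniqueness, up to unitary conjugacy and a character, of the finite-stage semidirect decomposition, which I would extract from Theorem~\ref{semidirectprodreconstruction} together with a height argument in the spirit of Ioana--Popa--Vaes; carefully verifying this uniqueness and the resulting compatibility is where the bulk of the work lies.
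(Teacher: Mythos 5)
Your endgame (recover $Q$ up to a character via a height argument, identify $w^*\Theta(\El(N))w$ with $\El(\Sg)$, then use normality of the canonical unitaries to split $\La=\Sg\rtimes\Phi$) matches the paper's, but the route you propose to the two key intermediate facts has a genuine gap. Your central step is to ``run the analysis of Theorem~\ref{semidirectprodreconstruction} at each stage, applied to the inclusions $\Theta(\El(\G_m))\subseteq\El(\La)$.'' Theorem~\ref{semidirectprodreconstruction} is not applicable there: its input is a $\ast$-isomorphism \emph{onto} the group von Neumann algebra of a mystery group, and its output is a decomposition of that mystery group. At finite stage $m$ the image $\Theta(\El(\G_m))$ is merely a property (T) subfactor of $\El(\La)$; it is not (a priori) $\El(\La_m)$ for any subgroup $\La_m\leqslant\La$, so there is no mystery group for the finite-stage theorem to decompose, and the subgroups $K_i^{(m)},\Phi_m\leqslant\La$ you want to glue are never produced. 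The coherence problem you flag as ``delicate'' is therefore not the real obstruction --- the stagewise data you plan to make coherent do not exist to begin with.

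The paper's proof avoids this by keeping the analysis global: it uses the comultiplication along $\La$, $\Delta:\emm\rightarrow\emm\bten\emm$, $\Delta(v_\lambda)=v_\lambda\otimes v_\lambda$, defined on all of $\emm$, which transports the (unknown) group structure of $\La$ into a single object that is then located against the known decomposition $N\rtimes Q$. The inductive limit structure enters only through the fact that property (T) subalgebras of $\emm\bten\emm$ (such as $\Delta(\El(G_F))$ for finite $F$) must intertwine into a finite corner $\El(G_{F_1})\bten\El(G_{F_2})$; the resulting ``for every finite $F$'' intertwinings are upgraded to $\Delta(\El(N))\prec^s\El(N)\bten\El(N)$ and $\Delta(\El(Q))\prec^s\El(Q)\bten\El(Q)$ via relative amenability and \cite[Lemma~2.7, 2.8]{DHI16} and \cite[Theorem~1.4]{PV12} (Theorem~\ref{semidirectrigiditys-infty1}), not via a gluing of stagewise decompositions. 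The group $\Sg$ is then produced in one stroke: $\Delta(\Aa)\subseteq\Aa\bten\Aa$ for $\Aa=u\El(N)u^*$ forces $\Aa=\El(\Sg)$ by \cite[Lemma~2.8]{CDK19}, so there is no increasing chain $\Sg_m$ to control, and the height argument for $\Phi$ is applied once, to $u\Delta(\El(Q))u^*$. Your observation that $\emm$ itself has no property (T) is correct, but the conclusion you draw from it --- never argue with $\emm$ as a whole --- points in the wrong direction; the fix is to argue globally with $\Delta$ and let property (T) of the exhausting subalgebras do the localizing.
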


    Amplification of a type II$_1$ factor $\emm$ by any positive real number $t>0$ was introduced by Murray and von Neumann in \cite{MvN43}. This is a II$_1$ factor denoted by $\emm^t$ and called the $t$-amplification of $\emm$. When $ t\leq 1$ this is the isomorphism class of $p\emm p$ for a projection $p\in \emm$ of trace $\tau(p)=t$ and when $1<t$, it is the isomorphism class of $p (M_n(\mathbb C) \otimes \emm)p$ for an integer $n$ with $t/n \leq 1$ and a projection $p\in M_n(\mathbb C) \otimes \emm$ of trace $(Tr_n\otimes \tau)(p)=t/n$. One can see that up to isomorphism the $\emm^t$ does not depend on $n$ or $p$ but only on the value of $t$. The fundamental group of a II$_1$ factor $\emm$ is defined by $\mathcal F(\emm):=\{t>0|\emm^t\cong \emm\}$ as a multiplicative subgroup of $\mathbb R_{+}$. As the fundamental group is an isomorphism invariant of the factor, its study is of central importance to the theory of von Neumann algebras. In \cite{MvN43} Murray and von Neumann were able to show  that the fundamental group of the hyperfinite $\rm II_1$ factor $\mr$ satisfies $\mathcal F(\mr)= \mathbb R_{+}$. This also implies that $\mathcal F = \mathbb R_{+} $ for all McDuff factors $\emm$.
    A breakthrough in this direction emerged from Connes' discovery in \cite{Co80} that the fundamental group of a  group factor $\mathcal F(\El(G))$ reflects rigidity aspects of the underlying group $G$, being countable whenever $G$ has property (T) of Kazdhan \cite{Kaz67}. This motivated him to formulate his famous Rigidity Conjecture in \cite{Co82} along with other problems on computing symmetries of property (T) factors---that were highlighted in subsequent articles by mathematicians \cite[Problem 2, page 551]{Co94}, \cite[Problems 8-9]{Jo00} and \cite[page 9]{Po13}. 
    
    However, despite  these impressive achievements, significantly less is known about the fundamental groups of property (T) factors. A significant progress towards the fundamental group of property (T) factors was first made in \cite{CDHK20}, where two classes of property (T) group factors were shown to have trivial fundamental group. Later it was observed in \cite{D20} that products of these two classes of groups also have trivial fundamental group. An uncountable family of wreath-like product groups with property (T) is constructed very recently in \cite{CIOS21} whose fundamental group is trivial. Beside the aforementioned semidirect product rigidity results we also made progress in this direction by showing that the group factors arising from \textbf{class} $\mathscr{S}_{m,n}$ have trivial fundamental group. This gives more examples of property (T) group factors with trivial fundamental group. We can also show that group factors coming from \textbf{class} $\mathscr S_{\infty,n}$ also have trivial fundamental group. These groups do not have property (T), however, they arise as inductive limits of factors. This result is somehow counter intuitive as for inductive limit situation one would expect the fundamental group to be $\mathbb R_{+}$ (sec section for detail explanation).

\begin{thmc}[Corollary~\ref{trivialfundamentalgroup_S_m,n} and Corollary~\ref{trivialfundamentalgroupS_infty,n}]
	For $\G\in \mathscr{S}_{m,n}\cup\mathscr{S}_{\infty,n}$, we have $\mathcal F(\El(\G))=\{1\}$.
\end{thmc}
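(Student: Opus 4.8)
The plan is to reduce the statement to the reconstruction theorems already in hand. Since $\mathcal F(\El(\G))$ is a subgroup of $\mathbb R_{+}$, it suffices to show that $t\in\mathcal F(\El(\G))$ forces $t=1$; replacing $t$ by $t^{-1}$ if necessary I assume $t\leq 1$. Fix a projection $p\in\emm:=\El(\G)$ with $\tau(p)=t$ and a $\ast$-isomorphism $\phi:\emm\to p\emm p=\emm^{t}$. Throughout I keep the canonical crossed-product picture $\emm=B\rtimes_{\sg}Q$, where $B=\El(N_1)\tp\cdots\tp\El(N_m)$ is the core and $Q$ acts diagonally; for \textbf{class}$\,\mathscr S_{\infty,n}$ the same picture holds with $B=\El(N)$ and $Q$ as in Theorem~B.

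The main step is to transport this decomposition across $\phi$. I would run the reconstruction underlying Theorem~\ref{semidirectprodreconstruction} (resp. Theorem~\ref{semidirect-S_inftystructure}) with the amplification $\emm^{t}$ in the role of $\El(\La)$: the proof there never uses that the target is literally a group factor, only that it carries the relevant rigid subalgebras, so it produces a core $\ast$-isomorphism $\Theta_0$ from $B$ onto the core $B_t$ of $\emm^{t}$, a group isomorphism $\delta:Q\to Q$, a character $\eta:Q\to\mathbb T$, and a unitary $w$ implementing $\Theta(xu_g)=\eta(g)\,w\,\Theta_0(x)v_{\delta(g)}\,w^{\ast}$. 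Because a $\ast$-isomorphism of $\mathrm{II}_1$ factors is automatically trace preserving and the cores are unital in the ambient factors, $\Theta_0:(B,\tau_B)\to(B_t,\tau_{B_t})$ is trace preserving; identifying $B_t$ with the amplified core $B^{t}$ this exhibits a trace-preserving isomorphism $B\cong B^{t}$ that, by the displayed intertwining relation, conjugates the action $\sg$ of $Q$ on $B$ to its amplification $\sg^{t}$ on $B^{t}$ up to $\delta$ and $\eta$.

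From here $t=1$ should follow. The relation $\sg\cong\sg^{t}$ together with the fact that $\delta$ is a genuine group automorphism pins the scaling: for \textbf{class}$\,\mathscr S_{m,n}$ one first invokes Connes' theorem (property (T) of $\G$ makes $\mathcal F(\El(\G))$ countable) to reduce to a discrete set of candidate $t$, and then the explicit geometric-group-theoretic control on $\delta$ and $\eta$ coming from the Rips construction rules out every nontrivial value. For \textbf{class}$\,\mathscr S_{\infty,n}$ the groups are not property (T), so Connes' countability is unavailable and the argument must rest entirely on Theorem~\ref{semidirect-S_inftystructure}, which determines the acting group completely; this is exactly what defeats the naive expectation---coming from the inductive-limit, McDuff-like heuristic---that $\mathcal F=\mathbb R_{+}$.

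The hard part is that for irrational $t$ the amplification $\emm^{t}$ carries no manifest crossed-product structure: the $Q$-invariant projections of the amplified core $B^{t}$ have integer trace, so one cannot simply write $\emm^{t}=B^{t}\rtimes Q$ by hand. The role of the reconstruction theorems is precisely to manufacture such a semidirect decomposition up to unitary conjugacy \emph{without} assuming it, and the delicate point I expect to be the main obstacle is verifying that the rigid-embedding hypotheses used in their proofs survive compression to the corner $p\emm p$. Once that is secured, trace preservation of $\Theta_0$ together with the conjugacy $\sg\cong\sg^{t}$ do the rest.
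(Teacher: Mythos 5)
Your central move---running the reconstruction of Theorem~\ref{semidirectprodreconstruction} (resp.\ Theorem~\ref{semidirect-S_inftystructure}) with the amplification $\emm^{t}$ in place of $\El(\La)$---does not go through, and the obstruction is more basic than the ``rigid-embedding hypotheses surviving compression'' that you flag at the end. The entire proof of those theorems is built on the comultiplication along the mystery group $\La$, namely $\Delta:\emm\rightarrow\emm\bar\otimes\emm$, $\Delta(v_\lambda)=v_\lambda\otimes v_\lambda$; every intermediate statement (Theorems~\ref{commutationcontrolincommultiplication}, \ref{toproductgroupcorners}, \ref{parabolicQ}, \ref{commutationcontrolincommultiplication2}) is a statement about where $\Delta$ of various subalgebras lands. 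This requires the target to literally be a group von Neumann algebra, because $\Delta$ is defined using the group basis of $\La$. For $t$ irrational there is no group $\La$ with $\El(\La)\cong\emm^{t}$ given in advance (indeed, the point of the corollary is to rule such things out), so the very first step of your transported argument cannot be performed. Your claim that ``the proof there never uses that the target is literally a group factor'' is therefore false. The endgame is also unsubstantiated: even granting a conjugacy $\sg\cong\sg^{t}$, you give no mechanism that forces $t=1$ (``explicit geometric-group-theoretic control on $\delta$ and $\eta$'' is not an argument), and the appeal to Connes' countability theorem is a red herring---it plays no role here and is unavailable for $\mathscr S_{\infty,n}$ in any case.

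The paper's actual route avoids the comultiplication entirely. It proves a standalone amplification-rigidity statement (Theorem~\ref{mainthm} for $\mathscr S_{m,n}$, Theorem~\ref{slekt-counter} for $\mathscr S_{\infty,n}$): for any $\ast$-isomorphism $\Theta:\El(N\rtimes Q)\rightarrow p\El(M\rtimes P)p$ between members of the class, necessarily $p=1$. The inputs are (i) the location results for \emph{commuting property (T) subalgebras} inside $\El(M\rtimes P)$ (Theorems~\ref{controlcommutingpropTsubalgebratrivialf_group} and \ref{intertwiningthm}), applied to the images $\Theta(\El(N_i))$, $\Theta(\El(\widehat N_i))$, $\Theta(\El(Q_i))$, $\Theta(\El(\widehat Q_i))$---these only need the \emph{target} to be a known crossed product, which it is---yielding Theorem~\ref{core-actinggp}: $\Theta(\El(N))$ and $\Theta(\El(Q))$ are unitarily conjugate to $p\El(M)p$ and corners of $\El(P)$; and (ii) a height computation $h_{\G}(\mathcal G)>0$ via \cite[Theorem~2.5]{CDHK20}, combined with non-embeddability of $\mathcal G''$ into centralizer algebras and weak mixing of ${\rm Ad}(v)$, $v\in\mathcal G$, which through \cite[Theorem~2.4]{CDHK20} (ultimately the Krogager--Vaes mechanism of \cite{KV15}) forces $p=1$, hence $t=\tau(p)=1$. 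If you want to salvage your write-up, you should replace the reconstruction theorems by these corner-compatible location results and supply the height/weak-mixing step that actually pins down $p$.
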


   In section~\ref{uniqueprimeness}, we provide another family of property (T) group factors with trivial fundamental group by showing unique prime factorization for products of groups arising from class $\mathscr S_{m,n}$ and class $\mathscr V$.
   
\begin{thmd}[Theorem~\ref{upffrom_s_mn}]
	For $\G_1,\G_2\in(\underset{m,n\geq 2}{\bigcup}\mathscr S_{m,n})\cup \mathscr V$, we have $\mathcal{F}(\G_1\times \G_2)=\{1\}$.
\end{thmd}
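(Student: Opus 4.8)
The plan is to deduce triviality of the fundamental group of $\emm:=\El(\G_1\times\G_2)=\El(\G_1)\bten\El(\G_2)$ from a unique prime factorization (UPF) statement for $\emm$, combined with the triviality of the fundamental group of each tensor factor. The two inputs I would record first are: (i) each $\El(\G_i)$ is a prime ${\rm II}_1$ factor, and (ii) $\mathcal F(\El(\G_i))=\{1\}$. For $\G_i\in\mathscr S_{m,n}$ the second point is exactly Theorem C, while for $\G_i\in\mathscr V$ it is part of the input from \cite{CDHK20}; primeness in the semidirect-product case is part of the structural analysis and is the delicate point discussed below. Granting (i) and (ii) together with UPF, the fundamental-group computation is short bookkeeping and, crucially, does not require deciding whether the canonical factors of $\emm$ are interchanged or not.

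Concretely, fix $t\in\mathcal F(\emm)$. Using $(\Aa\bten\Bee)^{t}\cong\Aa^{t}\bten\Bee$ we obtain
$$\El(\G_1)\bten\El(\G_2)\;\cong\;\big(\El(\G_1)\bten\El(\G_2)\big)^{t}\;\cong\;\El(\G_1)^{t}\bten\El(\G_2).$$
Applying UPF to this isomorphism, the prime tensor factors on the two sides must agree up to permutation and amplification, so there is $a>0$ with either (aligned) $\El(\G_1)\cong\El(\G_1)^{ta}$ and $\El(\G_2)\cong\El(\G_2)^{1/a}$, or (swapped) $\El(\G_1)\cong\El(\G_2)^{a}$ and $\El(\G_2)\cong\El(\G_1)^{t/a}$. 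In the aligned case, $1/a\in\mathcal F(\El(\G_2))=\{1\}$ forces $a=1$, and then $ta\in\mathcal F(\El(\G_1))=\{1\}$ gives $t=1$. In the swapped case, $\El(\G_1)\cong\El(\G_2)^{a}\cong\El(\G_1)^{t}$, so again $t\in\mathcal F(\El(\G_1))=\{1\}$. Hence $t=1$ in all cases and $\mathcal F(\emm)=\{1\}$.

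The substance of the proof, and the main obstacle, is therefore the UPF statement for $\emm$. Here I would follow the Ozawa--Popa strategy \cite{OP03,OP07}: given an abstract decomposition $\emm\cong\enn_1\bten\enn_2$ into prime factors, use Ozawa's bi-exactness/solidity machinery together with Popa's intertwining-by-bimodules to show that, after a unitary conjugacy, each $\enn_k$ intertwines into one of the canonical factors $\El(\G_1),\El(\G_2)$ and conversely, forcing the decomposition to coincide with the given one up to amplification; primeness of a single $\El(\G_i)$ is the special case of this analysis that rules out any nontrivial self-splitting. The hypotheses supply exactly the ingredients needed, since each $Q_j$ is bi-exact and weakly amenable and these properties propagate to the relevant corners. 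The principal difficulty is that $\G_1,\G_2$ are semidirect products, not direct products of bi-exact groups, so the classical Ozawa--Popa statement does not apply verbatim: before the bi-exactness arguments can be deployed, one must analyze how a putative tensor splitting interacts with the normal subgroups $N^{(i)}_\bullet$ and the acting product groups $Q^{(i)}$, controlling the positions of $\El(N^{(i)}_\bullet)$ and $\El(Q^{(i)})$ inside $\El(\G_i)$ by means of the rigidity already developed for class $\mathscr S_{m,n}$ (Theorem A). This interface between the semidirect-product rigidity and the prime factorization machinery is where I expect the real work to lie.
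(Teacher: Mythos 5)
Your deduction of the corollary from UPF is exactly the paper's: fix $t\in\mathcal F(\emm)$, write $\El(\G_1)\bten\El(\G_2)\cong\El(\G_1)^t\bten\El(\G_2)$, and split into the aligned/swapped cases given by unique prime factorization. In the aligned case both arguments use $\mathcal F(\El(\G_i))=\{1\}$ to force $t=1$; in the swapped case your composition $\El(\G_1)\cong\El(\G_2)^{a}\cong\El(\G_1)^{t}$ is actually a bit cleaner and more uniform than the paper's, which instead invokes its amplification-rigidity theorem (Theorem~\ref{mainthm}) to kill $s$ and $t/s$ directly --- an appeal that, as written, only literally covers the case where both groups lie in $\mathscr S_{m,n}$, so your variant is preferable there. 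The genuine divergence is in how the UPF input is obtained. You propose running the Ozawa--Popa bi-exactness/solidity machinery, correctly flagging that it does not apply verbatim because the $\G_i$ are semidirect products rather than products of bi-exact groups; but you leave this as an acknowledged obstacle rather than resolving it. The paper's route (Theorems~\ref{UPFSV} and~\ref{upfS}) is different: for two groups from $\mathscr S_{m,n}$ it embeds $\El(\G_1\times\G_2)$ into $\El(\times_i G_i)$ with each $G_i$ hyperbolic relative to a residually finite subgroup, and locates the two commuting property~(T) tensor factors $\mathcal S_1,\mathcal S_2$ via the relative-hyperbolicity intertwining theorem \cite[Theorem~5.3]{CDK19}, ruling out the ``parabolic'' alternative by an index argument; for the mixed case with $\mathscr V$ it exploits the amenable-by-hyperbolic structure $\La=\Aa\rtimes H$ together with \cite[Theorem~1.4]{PV12}, property~(T) of the factors, and primeness of $\El(\G)$ (Corollary~\ref{primeSm,n}). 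So property~(T) and relative hyperbolicity, not bi-exactness of the ambient groups, carry the load; your sketch points at the right general circle of ideas but would need to be replaced by (or reduced to) these location theorems to close the gap.
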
   

    Recall that a group $\G$ is said to $\mathcal C$-rigid (Cartan-rigid) in the sense of \cite{PV11} if the $\rm II_1$ factor $L^{\infty}(X)\rtimes \G$ has a unique Cartan subalgebra\footnote{A subalgebra $\mathcal A\subseteq \emm $ of a von Neumann algebra $\emm$ is a Cartan subalgebra if it is a maximal abelian von Neumann subalgebra of $\emm$ and its normalizer $\mathcal N_{\emm}(\mathcal A):=\{u\in\mathcal U(\emm):u\mathcal A u^*=\mathcal A \}$ generates $\emm$ as a von Neumann algebra} up to conjugacy for any ergodic, free, p.m.p. action $\G\ca X$. In the last section we obtain Cartan rigidity results for these group factors and the crossed-product von Neumann algebras associated to their free, ergodic, p.m.p. actions. Theorem~E provides new examples of $\mathcal C$-rigid groups, adding to the classes already discovered in \cite{PV11,PV12,Io12,CIK13,CDK19,CK-E21}.
    
    \begin{thme}[Corollary~\ref{cor:nocartan}] 
    Let $\G$ be either a product of groups that are hyperbolic relative to a family of residually finite subgroups, or products of groups from \textbf{class} $\mathscr S_{m,n}$. Then $\El(\G)$ has no Cartan subalgebras and $\G$ is $\mathcal C$-rigid.	
    \end{thme}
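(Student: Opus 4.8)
The plan is to establish both assertions uniformly by running Popa--Vaes deformation/rigidity on $M=B\rtimes\G$, taking $B=\mc$ for the statement that $\El(\G)$ has no Cartan subalgebra and $B=L^{\infty}(X)$ for $\mathcal C$-rigidity, where $\G\ca X$ is an arbitrary free ergodic p.m.p. action. First I would record the two structural inputs common to both families. Each factor $\G_i$ is weakly amenable with $\Lambda_{\mathrm{cb}}(\G_i)=1$ and is bi-exact relative to its family $\mathcal G_i$ of peripheral subgroups: in the relatively hyperbolic case this is Ozawa's computation of $\Lambda_{\mathrm{cb}}$ together with weak amenability of the (residually finite) peripherals, while for \textbf{class} $\mathscr S_{m,n}$ it follows from the relative hyperbolicity of the Rips pieces with respect to the weakly amenable, biexact group $Q$. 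Since $\Lambda_{\mathrm{cb}}$ is multiplicative and relative bi-exactness is stable under direct products in the appropriate sense, $\G=\G_1\times\cdots\times\G_k$ is weakly amenable and bi-exact relative to the product family $\mathcal G=\{\widehat{\G_i}\times H : 1\le i\le k,\ H\in\mathcal G_i\cup\{1\}\}$, where $\widehat{\G_i}=\prod_{l\neq i}\G_l$ and $H$ sits in the $i$-th coordinate.

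With these inputs in place I would invoke the Popa--Vaes dichotomy in its relative form: for any amenable subalgebra $A\subseteq pMp$ (a corner of a hypothetical Cartan), either $A\preceq_{M}B\rtimes\G_0$ for some $\G_0\in\mathcal G$, or $\mathcal N_{pMp}(A)''$ is amenable relative to $B\rtimes\G_0$ for some $\G_0\in\mathcal G$. Applying this to a Cartan subalgebra $A$, whose normalizer generates all of $pMp$, the second alternative would force $M$ to be amenable relative to $B\rtimes(\widehat{\G_i}\times H)$; decomposing along the product this is equivalent to $\G_i$ being amenable relative to the proper peripheral $H$, which is impossible because a relatively hyperbolic group is non-amenable relative to any of its proper peripheral subgroups (as one sees already for free products, where the transverse loxodromic structure obstructs relative amenability). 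Hence the first alternative must hold, and $A$ intertwines into $B\rtimes(\widehat{\G_i}\times H)$ for some coordinate $i$.

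The main obstacle is to convert this intertwining into the conclusion, and here the product structure and the non-amenable peripherals must be controlled together. A naive transfer via almost-malnormality of peripherals fails, because a product $H_1\times\cdots\times H_k$ of peripherals is \emph{not} almost malnormal in $\prod_j\G_j$. The intended route is the coordinate-wise refinement of relative bi-exactness: because $\G$ is bi-exact relative to $\mathcal G$ in a manner that controls each coordinate separately, one analyzes the normalizer one coordinate at a time and plays the coordinates against one another, in the spirit of Ozawa--Popa's product arguments and their relative generalizations, to ``peel'' each $\G_i$ down to its peripheral and rule out that a Cartan is trapped transverse to the base in any coordinate. Property (T) and biexactness of the $Q_j$ enter precisely to tame the (non-amenable) peripheral corners, whose relevant crossed products reduce to products of biexact weakly amenable groups and are handled by the base case of the theory.

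Finally I would read off the two conclusions. For $B=\mc$ the surviving alternative leads, after the coordinate-wise analysis, to a contradiction with regularity of a diffuse $A$, so $\El(\G)$ admits no Cartan subalgebra. For $B=L^{\infty}(X)$ the same analysis forces $A\preceq_{M}L^{\infty}(X)$; since $A$ and $L^{\infty}(X)$ are both Cartan, Popa's conjugacy criterion for intertwined Cartan subalgebras upgrades this to unitary conjugacy, yielding $\mathcal C$-rigidity of $\G$. The delicate simultaneous control of the product coordinates and of the non-amenable residually finite peripherals in the peeling step is where I expect essentially all of the work to lie.
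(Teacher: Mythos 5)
Your overall architecture (run the Popa--Vaes dichotomy on $B\rtimes\G$ with $B=\mathbb C$ and $B=L^\infty(X)$ simultaneously, rule out the relative-amenability branch, then upgrade the intertwining of a Cartan into $B$ via Popa's conjugacy criterion) matches the paper's endgame, but the engine you propose to drive it does not exist under the stated hypotheses, and the step you defer is exactly where the paper's actual argument lives. Concretely: you want to apply \cite[Theorem 1.4]{PV12} directly to $\G=\G_1\times\cdots\times\G_k$, which requires $\G$ to be weakly amenable and bi-exact relative to the peripheral family. In the first family of Theorem E the peripheral subgroups are only assumed \emph{residually finite} --- no exactness, no weak amenability --- so neither input is available; and in the class $\mathscr S_{m,n}$ case the peripheral $Q$ contains products of uniform lattices in $Sp(n,1)$, whose Cowling--Haagerup constant is $2n-1>1$, so your claim $\Lambda_{\mathrm{cb}}(\G_i)=1$ is false, and weak amenability of the Rips pieces $N_i\rtimes Q$ in any form is not established. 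Your ``non-amenability relative to a proper peripheral'' step and the entire coordinate-wise ``peeling'' are asserted, not proved, and you acknowledge that essentially all the work is there.

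The paper avoids needing any approximation property of $\G$ itself by using residual finiteness through group-theoretic Dehn filling: by \cite[Corollary 5.1]{CIK13}, each $\G_i$ surjects onto a non-elementary hyperbolic group $H_i$ with kernel a free product $K_1\ast K_2$ of conjugates of a finite-index subgroup of the peripheral. One then transports the Cartan problem along the $\ast$-embeddings $\Delta^{\rho_i}(xu_g)=xu_g\otimes v_{\rho_i(g)}$ into $\emm\,\bar\otimes\,\El(H_i)$ and $\Delta^{\pi_i}$ into $\mathcal N\,\bar\otimes\,\El(K_1\ast K_2)$; there \cite[Theorem 1.4]{PV12} (for the genuinely hyperbolic $H_i$) and the free-product dichotomy of \cite{Io12,Va13} apply, and \cite[Propositions 3.4--3.6]{CIK13} pull the intertwining back to $\mathcal P\rtimes\G_{\hat i}$ for each coordinate $i$; intersecting over $i$ via \cite[Lemma 2.8(2)]{DHI16} gives $\mathcal A\prec_\emm\mathcal P$. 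If you want to salvage your route you would have to either prove weak amenability and relative bi-exactness for these specific groups (open, and false in the normalized form you state) or replace your step two and three by this quotient-and-comultiplication mechanism; as written the proposal has a genuine gap at its foundation.
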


	
	\section{Preliminaries}
	
	\subsection{Notations and Terminology}

	We denote by $\mathbb N$ and $\mathbb Z$ the set of natural numbers and the integers, respectively. For any $k\in \mathbb N$ we denote by $\overline{1,k}$ the integers $\{1,2,...,k\}$. 
\vskip 0.05in	
	All von Neumann algebras in this document will be denoted by calligraphic letters e.g.\ $\mathcal A$, $\mathcal B$, $\mathcal M$, $\mathcal N$, etc. Given a von Neumann algebra $\mathcal M$ we will denote by $\mathscr U(\mathcal M)$ its unitary group, by $\mathscr P(\mathcal M)$ the set of all its nonzero projections, and by $\mathscr Z(\mathcal M)$ its center. We also denote by $(\mathcal M)_1$ its unit ball. All algebras inclusions $\mathcal N\subseteq \mathcal M$ are assumed unital unless otherwise specified. Given an inclusion $\mathcal N\subseteq \mathcal M$ of von Neumann algebras we denote by $\mathcal N'\cap \mathcal M$ the relative commutant of $\mathcal N$ in $\mathcal M$, i.e.\ the subalgebra of all $x\in \mathcal M$ such that $xy=yx$ for all $y\in \mathcal N$. We also consider the one-sided quasinormalizer $\mathscr {QN}^{(1)}_{\mathcal M}(\mathcal N)$ (the semigroup of all $x\in\mathcal M$ for which there exist $x_1,x_2,...,x_n \in \mathcal M$ such that $\mathcal N x\subseteq \sum_i x_i \mathcal N$) and the quasinormalizer $\mathscr {QN}_{\mathcal M}(\mathcal N)$ (the set of all $x\in\mathcal M$ for which there exist $x_1,x_2,...,x_n \in \mathcal M$ such that $\mathcal N x\subseteq \sum_i x_i \mathcal N$ and $x\mathcal N \subseteq \sum_i  \mathcal N x_i$) and we notice that $\mathcal N\subseteq \mathscr{N}_{\mathcal M}(\mathcal N)\subseteq \mathscr {QN}^{}_{\mathcal M}(\mathcal N)\subseteq \mathscr {QN}^{(1)}_{\mathcal M}(\mathcal N)$.
\vskip 0.05in
All von Neumann algebras $\emm$ considered in this article will be tracial, i.e.\ endowed with a unital, faithful, normal linear functional $\tau:M\rightarrow \mathbb C$  satisfying $\tau(xy)=\tau(yx)$ for all $x,y\in \emm$. This induces a norm on $\emm$ by the formula $\|x\|_2=\tau(x^*x)^{1/2}$ for all $x\in \emm$. The $\|\cdot\|_2$-completion of $\emm$ will be denoted by $L^2(\emm)$.  For any von Neumann subalgebra $\mathcal N\subseteq \mathcal M$ we denote by $E_{\mathcal N}:\mathcal M\rightarrow \mathcal N$ the $\tau$-preserving condition expectation onto $\mathcal N$. 
\vskip 0.05in
For a countable group $G$ we denote by $\{ u_g | g\in G \} \in \mathscr U(\ell^2G)$ its left regular representation given by $u_g(\delta_h ) = \delta_{gh}$, where $\delta_h:G\rightarrow \mathbb C$ is the Dirac function at $\{h\}$. The weak operatorial closure of the linear span of $\{ u_g | g\in G \}$ in $\mathscr B(\ell^2 G)$ is the so called group von Neumann algebra and will be denoted by $\El(G)$. $\El(G)$ is a II$_1$ factor precisely when $G$ has infinite non-trivial conjugacy classes (icc). If $\mathcal M$ is a tracial von Neumann algebra and $G \ca^\sigma \mathcal M$ is a trace preserving action we denote by $\mathcal M \rtimes_\sigma G$ the corresponding cross product von Neumann algebra \cite{MvN37}. For any subset $K\subseteq G$ we denote by $P_{\mathcal M K}$  the orthogonal projection from the Hilbert space $L^2(\mathcal M \rtimes G)$ onto the closed linear span of $\{x u_g \,|\, x\in \mathcal M, g\in K\}$. When $\mathcal M$ is trivial we will denote this simply by $P_K$.  
\vskip 0.05in
Given a subgroup  $H \leqslant G$ we denote by $C_G(H)$ the centralizer of $H$ in $G$ and by $N_G(H)$ the normalizer of $H$ in $G$. Also we will denote by $QN^{(1)}_G(H)$ the  one-sided quasinormalizer of $H$ in $G$; this is the semigroup of all $g\in G$ for which there exist a finite set $F \subseteq G$ such that $Hg\subseteq F H$. Similarly we denote by $QN_G(H)$ the  quasinormalizer (or commensurator) of $H$ in $G$, i.e.\ the subgroup of all $g\in G$ for which there is a finite set $F \subseteq G$ such that $Hg\subseteq FH$ and $gH\subseteq HF$. We canonically have $H C_G(H)\leqslant N_G(H)\leqslant QN_G(H)\subseteq QN^{(1)}_G(H)$. We often consider the virtual centralizer of $H$ in $G$, i.e. $vC_G(H)=\{g\in G \,|\, |g^{H}|<\infty\} $. Notice $vC_G(H)$ is a subgroup of $G$ that is normalized by $H$. When $H=G$, the virtual centralizer is nothing else but the FC-radical of $G$. Also one can easily see from definitions that $HvC_G(H)\leqslant QN_G(H)$. For a subgroup $H\leqslant G$ we denote by $\llangle H\rrangle$ the normal closure of $H$ in $G$.

Finally, for any groups $G$ and $N$ and an action $G\curvearrowright^{\sg} N$ we denote by $N\rtimes_\sigma G$ the corresponding semidirect product group.

	\subsection{Popa's Intertwining Techniques} Sorin Popa has introduced  in \cite [Theorem 2.1 and Corollary 2.3]{Po03} a beautiful analytic criterion for identifying intertwiners between arbitrary subalgebras of tracial von Neumann algebras. Now this is known in the literature  as \emph{Popa's intertwining-by-bimodules technique} and has played a key role in the classification of von Neumann algebras.

\begin {theorem}\cite{Po03} \label{corner} Let $(\mathcal M,\tau)$ be a separable tracial von Neumann algebra and let $\mathcal P, \mathcal Q\subseteq \mathcal M$ be (not necessarily unital) von Neumann subalgebras. 
Then the following are equivalent:
\begin{enumerate}
\item There exist $ p\in  \mathscr P(\mathcal P), q\in  \mathscr P(\mathcal Q)$, a $\ast$-homomorphism $\theta:p \mathcal P p\rightarrow q\mathcal Q q$  and a partial isometry $0\neq v\in q \mathcal M p$ such that $\theta(x)v=vx$, for all $x\in p \mathcal P p$.
\item For any group $\mathcal G\subset \mathscr U(\mathcal P)$ such that $\mathcal G''= \mathcal P$ there is no sequence $(u_n)_n\subset \mathcal G$ satisfying $\|E_{ \mathcal Q}(xu_ny)\|_2\rightarrow 0$, for all $x,y\in \mathcal  M$.
\item There exist finitely many $x_i, y_i \in \mathcal M$ and $C>0$ such that  $\sum_i\|E_{ \mathcal Q}(x_i u y_i)\|^2_2\geq C$ for all $u\in \mathcal U(\mathcal P)$.
\end{enumerate}
\end{theorem}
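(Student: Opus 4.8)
\noindent\emph{Proof strategy.} The plan is to route everything through Jones' basic construction $\langle \mathcal{M}, e_{\mathcal{Q}}\rangle$, equipped with its canonical semifinite faithful normal trace $\mathrm{Tr}$ determined by $\mathrm{Tr}(x e_{\mathcal{Q}} y)=\tau(xy)$ for $x,y\in\mathcal{M}$. The first step is the bookkeeping identity $\|E_{\mathcal{Q}}(y)\|_2^2=\mathrm{Tr}(e_{\mathcal{Q}} y^* e_{\mathcal{Q}} y e_{\mathcal{Q}})$, which rewrites the quantity in (3) as
\[
\sum_i \|E_{\mathcal{Q}}(x_i u y_i)\|_2^2=\mathrm{Tr}\big(B\, uAu^*\big),\qquad A:=\textstyle\sum_i y_i e_{\mathcal{Q}} y_i^*,\ \ B:=\sum_i x_i^* e_{\mathcal{Q}} x_i,
\]
where $A,B\ge 0$ have finite $\mathrm{Tr}$. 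I will prove $(3)\Leftrightarrow(1)$ through the intermediate condition
\[
(\ast)\quad \text{there is a nonzero projection } e\in \mathcal{P}'\cap\langle \mathcal{M}, e_{\mathcal{Q}}\rangle \text{ with } \mathrm{Tr}(e)<\infty,
\]
and prove $(2)\Leftrightarrow(3)$ separately by a soft compactness argument.

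For $(3)\Rightarrow(\ast)$ I would take the $\|\cdot\|_{2,\mathrm{Tr}}$-closed convex hull $\mathcal{C}$ of $\{uAu^*: u\in\mathscr U(\mathcal{P})\}$ inside $L^2(\langle\mathcal{M},e_{\mathcal{Q}}\rangle,\mathrm{Tr})$; since $\|uAu^*\|\le\|A\|$ and $\mathrm{Tr}(uAu^*)=\mathrm{Tr}(A)$ are uniformly controlled, $\mathcal{C}$ is bounded and contains a unique element $d$ of minimal $\|\cdot\|_{2,\mathrm{Tr}}$-norm. Uniqueness forces $udu^*=d$ for all $u\in\mathscr U(\mathcal{P})$, i.e. $d\in\mathcal{P}'$; boundedness gives $d\in\langle\mathcal{M},e_{\mathcal{Q}}\rangle$ with $d\ge 0$ and $\mathrm{Tr}(d)<\infty$; and since $T\mapsto\mathrm{Tr}(BT)$ is $\|\cdot\|_{2,\mathrm{Tr}}$-continuous and $\ge C$ on $\{uAu^*\}$, the bound survives convex combinations and limits to give $\mathrm{Tr}(Bd)\ge C>0$, so $d\neq 0$. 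A spectral projection $e=\mathbf 1_{[\varepsilon,\infty)}(d)$ then verifies $(\ast)$. The reverse $(\ast)\Rightarrow(3)$ runs the same correspondence backwards: approximating $e$ in $\|\cdot\|_{2,\mathrm{Tr}}$ by a finite sum $\sum_i x_i e_{\mathcal{Q}} y_i$ and using $ueu^*=e$ together with Cauchy--Schwarz for $\mathrm{Tr}$ produces a uniform lower bound $\mathrm{Tr}(B\,uAu^*)\ge C$, which unpacks to (3).

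The heart of the argument is $(\ast)\Leftrightarrow(1)$. Given the intertwiner $(v,\theta)$ of (1), I would let $e$ be the orthogonal projection of $L^2(\mathcal{M})$ onto the right-$\mathcal{Q}$-submodule $\overline{v L^2(\mathcal{Q})}$; this projection commutes with $J\mathcal{Q}J$, hence lies in $\langle\mathcal{M},e_{\mathcal{Q}}\rangle$, has finite $\mathrm{Tr}$, and the relation $\theta(x)v=vx$ makes $e$ commute with the corner $p\mathcal{P}p$, which one then propagates to all of $\mathcal{P}$ to obtain $(\ast)$. Conversely, a finite-$\mathrm{Tr}$ projection $e\in\mathcal{P}'\cap\langle\mathcal{M},e_{\mathcal{Q}}\rangle$ has range a $\mathcal{P}$--$\mathcal{Q}$ subbimodule of $L^2(\mathcal{M})$ that is finitely generated as a right $\mathcal{Q}$-module; choosing a finite Pimsner--Popa basis and compressing the left $\mathcal{P}$-action into the finite matrix amplification of $\mathcal{Q}$ on which it acts produces corners $p,q$, a $\ast$-homomorphism $\theta:p\mathcal{P}p\to q\mathcal{Q}q$, and a nonzero partial isometry $v$ with $\theta(x)v=vx$. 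I expect this reconstruction of the algebraic intertwiner from the finite-index bimodule to be the main obstacle: it requires the structure theory of finite right-$\mathcal{Q}$-dimensional bimodules and careful handling of the corner projections, whereas every other step is either a trace computation or a convexity argument.

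Finally, for $(2)\Leftrightarrow(3)$ the implication $(3)\Rightarrow(2)$ is immediate, since the \emph{finite} witnessing family $\{x_i,y_i\}$ and constant $C$ from (3) obstruct $\sum_i\|E_{\mathcal{Q}}(x_i u_n y_i)\|_2^2\to 0$ for any sequence in any generating group $\mathcal{G}\subseteq\mathscr U(\mathcal{P})$. For $(2)\Rightarrow(3)$ I argue by contraposition: if (3) fails, then working inside the fixed generating group $\mathscr U(\mathcal{P})$ I would use separability of $\mathcal{M}$ to fix a $\|\cdot\|_2$-dense sequence in $(\mathcal{M})_1$ and diagonalize, choosing at stage $n$ a unitary $u_n\in\mathscr U(\mathcal{P})$ with $\sum_{i\le n}\|E_{\mathcal{Q}}(x_i u_n y_i)\|_2^2<1/n$; contractivity of $E_{\mathcal{Q}}$ then upgrades this to $\|E_{\mathcal{Q}}(x u_n y)\|_2\to 0$ for all $x,y\in\mathcal{M}$, exhibiting a decaying sequence and hence $\neg(2)$.
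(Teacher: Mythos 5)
The paper does not prove this statement at all: it is quoted from Popa's work (Theorem 2.1 and Corollary 2.3 of [Po03]) and used as a black box, so there is no in-paper argument to measure yours against. What you have written is, in outline, the standard (essentially Popa's original) proof: rewrite the quantity in (3) as a trace pairing in the basic construction, pass to a $\mathcal P$-invariant positive finite-trace element by the minimal-norm element of the closed convex hull of $\{uAu^*\}$, identify finite-trace projections in $\mathcal P'\cap\langle\mathcal M,e_{\mathcal Q}\rangle$ with $\mathcal P$-$\mathcal Q$ subbimodules of $L^2(\mathcal M)$ of finite right $\mathcal Q$-dimension, extract the intertwiner from a Pimsner--Popa basis, and link (2) with (3) by separability and diagonalization. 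The strategy is sound, and you correctly isolate the bimodule-to-intertwiner reconstruction as the technical core.

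A few places need tightening before this is a proof rather than a roadmap. With the paper's convention $v\in q\mathcal Mp$ and $\theta(x)v=vx$, the cyclic module you want is $\overline{v^*L^2(\mathcal Q)}$, not $\overline{vL^2(\mathcal Q)}$: taking adjoints gives $xv^*=v^*\theta(x)$ for $x\in p\mathcal Pp$, so it is $v^*L^2(\mathcal Q)$ that is invariant under left multiplication by $p\mathcal Pp$, whereas $vL^2(\mathcal Q)$ is invariant under $\theta(p\mathcal Pp)\subseteq q\mathcal Qq$, which is the wrong side. The phrase ``propagates to all of $\mathcal P$'' conceals a genuine step: your projection commutes only with the corner $p\mathcal Pp$, and producing a nonzero finite-trace element of $\mathcal P'\cap\langle\mathcal M,e_{\mathcal Q}\rangle$ requires summing finitely many conjugates $v_iev_i^*$ over partial isometries of $\mathcal P$ covering a nonzero central cut of the central support of $p$ --- finitely many, or the trace blows up. Your displayed identity is really the inequality $\sum_i\|E_{\mathcal Q}(x_iuy_i)\|_2^2\le\mathrm{Tr}(B\,uAu^*)=\sum_{i,j}\|E_{\mathcal Q}(x_iuy_j)\|_2^2$; it points the right way for the lower bound, but should not be written as an equality. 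Finally, the Pimsner--Popa compression naturally lands in a corner of $M_n(\mathbb C)\otimes\mathcal Q$, and matching the statement's $q\in\mathscr P(\mathcal Q)$ needs a last (standard) cutting argument. None of these is fatal, but each would be flagged by a referee of a self-contained write-up.
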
 
\vskip 0.02in
\noindent If one of the three equivalent conditions from Theorem \ref{corner} holds then we say that \emph{ a corner of $\mathcal P$ embeds into $\mathcal Q$ inside $\mathcal M$}, and write $\mathcal P\prec_{\mathcal M}\mathcal Q$. If we moreover have that $\mathcal P p'\prec_{\mathcal M}\mathcal Q$, for any projection  $0\neq p'\in \mathcal P'\cap 1_{\mathcal P} \mathcal M 1_{\mathcal P}$ (equivalently, for any projection $0\neq p'\in\mathscr Z(\mathcal P'\cap 1_{\mathcal P}  \mathcal M 1_{P})$), then we write $\mathcal P\prec_{\mathcal M}^{s}\mathcal Q$.
\vskip 0.02in
	
For further use we record the following result which controls the intertwiners in algebars arsing form malnormal subgroups. Its proof is essentially contained in \cite[Theorem 3.1]{Po03} so it will be left to the reader. 	  
\begin{lemma}[\cite{Po03}]\label{malnormalcontrol}Assume that $H\leqslant G$ be an almost malnormal subgroup and let $G \ca \mathcal N$ be a trace preserving action on a finite von Neumann algebra $\mathcal N $. Let $\mathcal P \subseteq \mathcal N \rtimes H$ be a von Neumann algebra such that $\mathcal P\nprec_{\mathcal N\rtimes H}  N$. Then for every elements $x,x_1,x_2,...,x_l \in \mathcal N\rtimes G$ satisfying $\mathcal P x\subseteq \sum^l_{i=1} x_i \mathcal P$ we must have that $x\in \mathcal N\rtimes H$. 
\end{lemma}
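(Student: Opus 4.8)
The plan is to prove that the component of $x$ orthogonal to $\enn\rtimes H$ vanishes, i.e.\ that $d:=x-E_{\enn\rtimes H}(x)=0$. Write $\emm=\enn\rtimes G$ and $\Bee=\enn\rtimes H$, and note $d\in\emm$ has Fourier support in $G\setminus H$. Because $\cp\subseteq\Bee$ and $\cp\nprec_{\Bee}\enn$, part (2) of Theorem~\ref{corner} furnishes a sequence $(u_n)_n\subseteq\euu(\cp)$ with $\|E_{\enn}(au_nb)\|_2\to0$ for all $a,b\in\Bee$; this is the sequence I will push through the quasinormalizer relation. From $\cp x\subseteq\sum_{i=1}^l x_i\cp$ I may write, for each $u\in\euu(\cp)$, $ux=\sum_i x_i\,c_i(u)$ with $c_i(u)\in\cp$, and after replacing $\{x_i\}$ by a Pimsner--Popa basis $\{\eta_i\}\subseteq L^2(\emm)$ of the finitely generated right $\cp$-module $\overline{\sum_i x_i\cp}$ I may assume the uniform Parseval bound $\sum_i\|c_i(u)\|_2^2=\|ux\|_2^2=\|x\|_2^2$. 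This is routine Hilbert-module bookkeeping.

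Next I reduce to a single mixing estimate. Since $u\in\Bee$, the bimodularity of $E_{\Bee}$ together with the fact that $\eh:=L^2(\emm)\ominus L^2(\Bee)$ is a $\Bee$-bimodule give $ud=(1-E_{\Bee})(ux)=\sum_i(1-E_{\Bee})(\eta_i c_i(u))$, and, using $ud\in\eh$,
\[
\|d\|_2^2=\la ud,ud\ra=\sum_i\la ud,\eta_ic_i(u)\ra=\sum_i\la E_{\Bee}(\eta_i^{\ast}ud),c_i(u)\ra\le\Big(\sum_i\|E_{\Bee}(\eta_i^{\ast}ud)\|_2^2\Big)^{1/2}\|x\|_2 .
\]
(Here $ud\in\emm$ is bounded, so each $\eta_i^{\ast}ud\in L^2(\emm)$ and $E_{\Bee}(\eta_i^{\ast}ud)$ makes sense.) Taking $u=u_n$, it now suffices to prove the key estimate: $\|E_{\Bee}(au_nb)\|_2\to0$ for every fixed $a\in L^2(\emm)$ and $b\in\eh\cap\emm$. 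Applying it to the finitely many $a=\eta_i^{\ast}$ and to $b=d$ forces $\|d\|_2=0$, which is the assertion.

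For the key estimate I would first reduce, by a $\|\cdot\|_2$-approximation that exploits $\|\xi u_n\|_2=\|\xi\|_2$ and $\|\eta\,b\|_2\le\|b\|_\infty\|\eta\|_2$, to the case $a=yu_s$ and $b=zu_t$ with $y,z\in\enn$, $s\in G$ and $t\in G\setminus H$. Expanding $u_n=\sum_{h\in H}(u_n)_h u_h$ and tracking group elements, $au_nb$ is a sum over $h\in H$ of terms with group part $sht$, so $E_{\Bee}(au_nb)$ keeps exactly the indices $h\in H$ with $sht\in H$. The decisive observation is that this index set is either empty---which occurs precisely when $s\in H$, since then $sht\in H$ would force $t\in H$---or a single coset of $H\cap s^{-1}Hs$; and by almost malnormality $H\cap s^{-1}Hs$ is \emph{finite} whenever $s\notin H$. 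Hence the surviving sum is finite and orthogonal, giving $\|E_{\Bee}(au_nb)\|_2^2\le\|y\|_\infty^2\|z\|_\infty^2\sum_h\|(u_n)_h\|_2^2$ over that finite coset. Since $(u_n)_h=E_{\enn}(u_nu_h^{\ast})$ and $\|E_{\enn}(u_nu_h^{\ast})\|_2\to0$ for each fixed $h$ by the choice of $(u_n)_n$, finiteness of the coset yields $\|E_{\Bee}(au_nb)\|_2\to0$.

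The main obstacle is the group-theoretic step in the last paragraph: showing that, after applying $E_{\Bee}$, the $u_n$-Fourier support of $au_nb$ collapses onto a single coset of the finite group $H\cap s^{-1}Hs$. This is exactly where almost malnormality is indispensable, since it is what lets the mixing of $(u_n)_n$ relative to the small algebra $\enn$ propagate to mixing relative to $\Bee$ across every nontrivial double coset $HsH$; equivalently it encodes the reduction ``$\cp\nprec\enn$ implies $\cp\nprec\enn\rtimes F$ for each relevant finite $F=H\cap s^{-1}Hs$''. The two remaining ingredients---the uniform bound on $c_i(u)$ and the density reduction in the key estimate---are standard.
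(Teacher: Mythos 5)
Your argument is correct and is essentially the standard proof the paper is deferring to (it cites \cite[Theorem 3.1]{Po03} and leaves the proof to the reader): extract a sequence $(u_n)\subseteq \euu(\cp)$ witnessing $\cp\nprec_{\enn\rtimes H}\enn$, expand $u_nx$ over a Pimsner--Popa basis of $\overline{\sum_i x_i\cp}$, and kill $d=x-E_{\enn\rtimes H}(x)$ via the mixing estimate $\|E_{\enn\rtimes H}(au_nb)\|_2\to 0$ for $b\perp \enn\rtimes H$, which is exactly where almost malnormality enters through the finiteness of $H\cap s^{-1}Hs$ for $s\notin H$. The only slip is the parenthetical claim that the index set $\{h\in H: sht\in H\}$ is empty \emph{precisely} when $s\in H$ --- it can also be empty for $s\notin H$ --- but this is harmless since your bound only uses that the set is empty for $s\in H$ and contained in a single (finite) coset of $H\cap s^{-1}Hs$ for $s\notin H$.
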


We now state the following lemma that is a variation of \cite[Lemma~9.2(1)]{Io11}.

\begin{lemma}\label{io11gen}
	Let $\G_1\times \G_2= \La$ be an countable groups such that $ \La\times \La=\G_1\times\G_2\times \G_1\times \G_2 $. Let $\Delta:\emm\rightarrow\emm\bar{\otimes}\emm$ be the comultiplication map given by $\Delta(u_g)=u_g\otimes u_g$ where $\emm=\El(\La)$. If $\Delta(\El(Q))\prec_{ \mathcal K \bten \El(\G_2)}\mathcal{K}\bten 1$, then $\El(Q)\prec_{\emm} \mathbb C 1$, where $\mathcal K=\El(\G_1\times\G_2\times\G_1)$.
\end{lemma}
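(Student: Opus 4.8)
The plan is to run the comultiplication argument of Ioana and of \cite{IPV10,Io11} by contraposition, exploiting the defining relation $\Delta(u_\lambda)=u_\lambda\otimes u_\lambda$ directly rather than any subgroup structure. Note that $\El(Q)$ need not be a group subalgebra of $\emm=\El(\La)$ in the obvious way, so the purely group-theoretic reformulation of Theorem~\ref{corner} cannot be applied to $\Delta(\El(Q))$; the combinatorics of $\Delta$ must be used instead. Throughout I identify $\emm\bten\emm=\El(\G_1\times\G_2\times\G_1\times\G_2)$ so that, with $\mathcal K=\El(\G_1\times\G_2\times\G_1)$, the target reads $\mathcal K\bten 1=\emm\bten\El(\G_1)$, where the displayed copy $\El(\G_1)$ occupies the third tensor leg (the $\G_1$-factor of the second copy of $\emm$) and the fourth leg is collapsed.

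First I would record the computational identity underlying the method. On canonical unitaries one checks $E_{\emm\bten 1}(\Delta(u_\lambda))=\tau(u_\lambda)\,1$, whence by normality $E_{\emm\bten 1}(\Delta(x))=\tau(x)1$ for all $x\in\emm$. More precisely, writing $\lambda=(a,b)\in\G_1\times\G_2$, a direct Fourier computation gives $E_{\mathcal K\bten 1}(\Delta(u_{(a,b)}))=\delta_{b,e}\,(u_a\otimes 1\otimes u_a\otimes 1)$; that is, the conditional expectation onto $\mathcal K\bten 1$ is supported on the set $\{b=e\}$ and there collapses $\Delta$ onto its $\G_1$-component. This is the only way in which $E_{\mathcal K\bten 1}\circ\Delta$ fails to reduce to the trace.

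Next I argue by contraposition. Suppose $\El(Q)\nprec_\emm\mc 1$. By Theorem~\ref{corner} there is a sequence of unitaries $w_n\in\mathscr U(\El(Q))$ with $\|E_{\mc 1}(xw_ny)\|_2=|\tau(xw_ny)|\to 0$ for all $x,y\in\emm$. The goal is to propagate this trace-decay through $\Delta$ and conclude $\|E_{\mathcal K\bten 1}(X\Delta(w_n)Y)\|_2\to 0$ for all $X,Y\in\emm\bten\emm$, which by Theorem~\ref{corner}(2) contradicts the hypothesis $\Delta(\El(Q))\prec_{\mathcal K\bten\El(\G_2)}\mathcal K\bten 1$. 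It suffices to verify the decay on a $\|\cdot\|_2$-total family, say $X=u_s\otimes u_t$ and $Y=u_{s'}\otimes u_{t'}$, and then pass to arbitrary $X,Y$ by density together with the Bessel-type bound furnished by condition (3) of Theorem~\ref{corner}. Using the identity of the previous step, each such term is controlled by finitely many scalars of the form $\tau(x w_n y)$, which vanish in the limit.

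The main obstacle is exactly the $\{b=e\}$ clause of that identity: because $\mathcal K$ retains the extra copy $\El(\G_1)$ in the third leg, the collapse $E_{\mathcal K\bten 1}\circ\Delta=\tau(\cdot)1$ holds only in the $\G_2$-coordinate, so the estimate above only yields the weaker transfer conclusion $\El(Q)\prec_\emm\El(\G_1)$ (this is the general principle $\Delta(\mathcal A)\prec\emm\bten\mathcal P\Rightarrow\mathcal A\prec_\emm\mathcal P$ with $\mathcal P=\El(\G_1)$). To upgrade this to $\El(Q)\prec_\emm\mc 1$ I would invoke the structural position of $Q$: since $Q$ is not virtually absorbed by the single direct factor $\G_1$ (as guaranteed by the hypotheses on $Q$ and the preceding reductions locating $Q$ off the $\G_1$-factor), any such intertwining $\El(Q)\prec_\emm\El(\G_1)$ must have finite residual intersection and hence forces $Q$ to be finite. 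Concretely, combining $\El(Q)\prec_\emm\El(\G_1)$ with the almost malnormality of $\G_1$ through Lemma~\ref{malnormalcontrol} rules out an infinite residual part, leaving $\El(Q)\prec_\emm\mc 1$. This final upgrade --- eliminating the residual $\G_1$-intertwining introduced by the extra tensor leg --- is the delicate point; the remaining steps are the routine bookkeeping of the comultiplication technique.
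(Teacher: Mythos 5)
Your setup and the first half of the argument are the standard comultiplication transfer and are essentially fine: the identity $E_{\mathcal K\bten 1}(\Delta(u_{(a,b)}))=\delta_{b,e}\,u_a\otimes 1\otimes u_a\otimes 1$ is correct, and you are right that, run by contraposition, the Fourier bookkeeping only controls the $\G_2$-fibers of $\Delta(w_n)$, so the raw transfer yields $\El(Q)\prec_{\emm}\El(\G_1\times 1)$ and nothing more. The genuine gap is your final upgrade. First, $\G_1\times 1$ is a direct factor of $\La=\G_1\times\G_2$, hence normal and centralized by $\G_2$; it is never almost malnormal in $\La$ when both factors are infinite, so Lemma~\ref{malnormalcontrol} simply does not apply to it. Second, the lemma places no hypotheses on $Q$ (it is not even introduced in the statement), so nothing "guarantees" that $Q$ is not virtually absorbed by $\G_1$; and even granting such a hypothesis, the group-theoretic criterion for intertwining of group subalgebras would then give $\El(Q)\nprec_\emm\El(\G_1\times 1)$ outright, i.e.\ you would only have shown the hypothesis of the lemma to be vacuous, not the asserted implication $\El(Q)\prec_\emm\mathbb C 1$ --- which is what is actually needed where the lemma is invoked (case 9 of Theorem~\ref{commutationcontrolincommultiplication}, where the hypothesis does hold and atomicity is derived).

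The paper's route is different in form: it re-embeds $Q$ into $\G_1\times\G_2\times\G_1$ by the repeated-coordinate map $\beta(g)=(\pi_1\iota(g),\pi_2\iota(g),\pi_1\iota(g))$, uses that $\Delta(u_q)=u_{\beta(q)}\otimes u_{\pi_2\iota(q)}$ splits across $\mathcal K\bten\El(\G_2)$, argues that $\El(\beta(Q))\nprec_{\mathcal K}\mathbb C1$ propagates to $\Delta(\El(Q))\nprec_{\mathcal K\bten\El(\G_2)}\mathcal K\bten 1$, and then transfers $\El(Q)\prec_{\mathcal K}\mathbb C$ back to $\emm$ via \cite[Theorem~2.4]{CDK19}. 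The point your approach misses is that the first and third tensor legs of $\mathcal K\bten 1$ jointly retain both coordinates of $q$ through $\beta$, so the relevant relative position is that of $\El(\beta(Q))$ against $\mathbb C$ inside $\mathcal K$, not of $\El(Q)$ against $\El(\G_1)$ inside $\emm$. That said, you have put your finger on a real issue: once the typos in the paper's displayed computation are unwound, it too only bounds $\|E_{\mathcal K\bten1}(x\Delta(u_n)y)\|_2$ by the $\ell^2$-mass of $u_n$ on the fibers $\{q:\pi_2\iota(q)=k\}$, and the decay of those fiber sums is exactly the statement $\El(Q)\nprec_\emm\El(\G_1\times 1)$, which does not follow from $\El(\beta(Q))\nprec_{\mathcal K}\mathbb C1$ alone. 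So the weaker conclusion you isolate appears to be all that the computation delivers in either treatment; the difference is that you flag the missing step explicitly, but the mechanism you propose to fill it does not work.
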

\begin{proof}
	Let $\iota:Q\rightarrow \G_i\times \G_2$ be the embedding of groups and $\pi_i:\La\rightarrow \G_i$ be corresponding projection onto $\G_i$. We now view $Q$ inside $\G_1\times\G_2\times\G_1$ as the embedding $\beta:Q\rightarrow \G_1\times\G_2\times\G_1$ where $\beta(g):=(\pi_1\circ\iota(g),\pi_2\circ\iota(g),\pi_1\circ\iota(g))$. Assume that $\El(Q)\nprec_{\mathcal K} \mathbb C 1$. So we can find a sequence of unitaries $\{u_n\}_{n\geq 1} \subset \euu(\El Q)$ such that $\|E_{\mathbb{C}1}(au_nb)\|_2\rightarrow 0$ for all $a,b\in \mathcal K$. We now claim that
	\begin{align}
		\|E_{\mathcal K\bten 1}(x\Delta(u_n)y)\|_2\rightarrow 0\ for\ all\ x,y\in \emm\bten \emm
	\end{align}
Since $E_{\mathcal K\bten1}$ is $\mathcal{K}\bten 1$-bimodular, we can assume that $x=1\otimes au_g,y=1\otimes bu_h$ for some $a,b\in\mathbb{C}$ and $g,h\in \G_2$. For every $n\geq 1$ we have:
$$x\Delta(u_n)y=\sum_{\lam\in\G_1}\tau(1\otimes au_g(\Delta(u_n))1\otimes bu_h u^*_{\lam})u_{\lam}=\sum_{\lam\in\G_2}\tau(1\otimes au_g(w_n\otimes v_n)1\otimes bu_h (s^*_{\lam}\otimes t^*_{\lam}))s_{\lam}\otimes t_{\lam}$$ 
$$=\sum_{\lam\in\G_1}\tau(w_ns^*_{\lam})s_{\lam}\otimes au_gv_nbu_h$$
Thus we get that $\|E_{\mathcal K\bten 1}(x\Delta(u_n)y)\|_2\rightarrow 0$, this implies that $\Delta(\El(Q))\nprec_{ \mathcal K \bten \El(\G_2)}\mathcal{K}\bten 1$. Hence we have $\El(Q)\prec_{ \mathcal K}\mathbb{C}$. Now by \cite[Theorem~2.4]{CDK19} we get that $\El(Q)\prec_{ \emm}\mathbb{C}$.
\end{proof}


\subsection{Class $\mathscr{S}_{m,n}$}

Belegradek and Osin showed in \cite[Theorem 1.1]{BO06} that for every finitely generated group $Q$ one can find a property (T) group $N$ such that $Q$ embeds into ${\rm Out}(N)$ as a finite index subgroup. This canonically gives rise to an action $Q \curvearrowright^\rho N$ by automorphisms such that the corresponding semidirect product group $N\rtimes_\rho Q$ is hyperbolic relative to $\{Q\}$. Throughout this document the semidirect products $N\rtimes_\rho Q$ will be termed Rips construction groups. When $Q$ is torsion free then one can pick $N$ to be torsion free as well and hence both $N$ and $N\rtimes_\rho Q$ are icc groups. Also when $Q$ has property (T) then $N\rtimes_\rho Q$ has property (T). Under all these assumptions we will denote by $\mathcal Rips_T(Q)$ the class of these Rips construction groups $N\rtimes_\rho Q$.

In \cite[Sections 3,5]{CDK19} the authors introduced a class of property (T) groups based on fiber products of Rips construction groups and have proved several rigidity results for the corresponding von Neumann algebras, \cite[Theorem A]{CDK19}. In \cite{CDHK20} the authors showed that the group von Neumann algebras corresponding to the classes of groups considered in \cite{CDK19} have trivial fundamental group. Next we briefly recall this construction and also define the class $\mathscr{S}_{m,n}$ which generalizes the class $\mathscr{S}$.

For $n\geq 2$, consider any product group $Q= Q_1\times Q_2\times \cdots \times Q_n$, where $Q_i$ are any nontrivial, bi-exact, weakly amenable, property (T), residually finite, torsion free, icc groups. Then for every $i=1,2,\cdots,m$  consider a Rips construction  $G_i = N_i \rtimes_{\rho_i} Q\in \mathcal Rips_T(Q)$, let $N=N_1\times N_2\times\cdots\times N_m$  and denote by $G= N\rtimes_\sigma Q$ the canonical semidirect product which arises from the diagonal action  $\sigma=\rho_1\times \rho_2\times\cdots\times\rho_m: Q\rar {\rm Aut}(N)$, i.e. $\sigma_g (h_1,h_2,\cdots,h_m)=( (\rho_1)_g(h_1), (\rho_2)_g(h_2),\cdots,(\rho_m)_g(h_m))$ for all $(h_1,h_2,\cdots,h_m)\in N$. Throughout this article the category of all these  semidirect products $G$ will denoted by {\bf Class $\mathscr S_{m,n}$}. Note that {\bf Class $\mathscr S_{2,2}$} is precisely the {\bf Class $\mathscr S$} as defined in \cite{CDK19}.

Concrete examples of semidirect product groups in class  $\mathscr S_{m,n}$ can be obtained if the initial groups $Q_i$ are any uniform lattices in $Sp(n,1)$ when $n\geq 2$. Indeed one can see that required conditions on $Q_i$'s follow from \cite{Oz03,CH89}.

For further reference and algebraic properties of groups in class $\mathscr S$, the reader may consult \cite[Sections 3,4,5]{CDK19} and the references within. 

\subsection{Class $\mathscr{S}_{\infty,n}$}

Consider the product group $Q=Q_1\times\cdots\times Q_n$, where $n\geq 2$ and $Q_i$ are any nontrivial, bi-exact, weakly amenable, property (T), residually finite, torsion free, icc groups. Now consider the groups $N_i\rtimes Q\in Rips_T(Q)$ for $i\in\mathbb N$. Let $N=\underset{i\in\mathbb N}{\oplus} N_i$ and consider the group $\G=N\rtimes_{\sg} Q$, where the action $\sg$ is the diagonal action. The category of all these groups $\G$ will be denoted by \textbf{Class} $\mathscr{S}_{\infty,n}$.
\vskip 0.1in 
Note that for $\G\in\mathscr{S}_{\infty,n}$, $\El(\G)$ does not have property (T) and can be written as increasing union of irreducible property (T) group factors. Let $\G_k:=(N_1\times\cdots\times N_k)\rtimes Q$, then observe that $\G_k\in\mathscr{S}_{k,n}$ has property (T) and $\El(G)=(\underset{k\in \mathbb N,k\geq 2}{\bigcup} \El(\G_k))''$.


\section{Semidirect product rigidity for group von Neumann algebras associated to class $\mathscr{S}_{m,n}$}

A celebrated conjecture of Connes predicts that all icc property (T) groups are $W^*$-superrigid. Recently, in a beautiful paper \cite{CIOS21}, the authors constructed a class of property (T) W$^*$-superrigid group. Unfortunately, not much is known at this time except this class of wreath like product groups (contains uncountably many property (T) groups). Moreover, in the current literature there is an almost complete lack of examples of algebraic features occurring in a property (T) group that are recognizable at the von Neumann algebraic level. For instance, besides the preservance of the Cowling-Haagerup constant \cite{CH89}, the amenablity of normalizers of infinite amenable subgroups in hyperbolic property (T) groups from \cite[Theorem 1]{Oz03}, the product rigidity for hyperbolic property (T) groups from \cite[Theorem A]{CdSS15}, and wreath like product structure from \cite[Corollary~1.6, Theorem~1.7]{CIOS21} very little is known. Therefore in order to successfully construct property (T) $W^*$-superrigid groups via a strategy similar to \cite{IPV10,CI17} we believe it is imperative to first identify a comprehensive list of algebraic features of property (T) groups that survive the von Neumann algebraic structure. Any success in this direction will potentially hint to what group theoretic methods to pursue in order to address Connes' conjecture.   
\vskip 0.07in
In this section we investigate groups arising from class $\mathscr S_{m,n}$. Notice that since property (T) is closed under extensions \cite[Section 1.7]{BdlHV00} it follows that $\G\in \mathscr S_{m,n}$  has property (T). Then for a fairly large family of groups $Q$ we show that the semidirect product feature of $\G$ is an algebraic property completely recoverable from the von Neumann algebraic regime. In addition, we also have a complete reconstruction of the acting group $Q$. The precise statement is the following

\begin{theorem}\label{semidirectprodreconstruction} Let $Q=Q_1\times Q_2\times \cdots\times Q_n$, where $n\geq 2$, $Q_i$ are icc, torsion free, biexact, property (T), weakly amenable, residually finite groups. For $i=1,2,\cdots,m$ with $m\geq 2$, let $N_i\rtimes_{\sigma_i} Q\in \mathcal Rip_T(Q)$ and denote by $\G=(N_1\times N_2\times\cdots\times N_m)\rtimes_{\sigma} Q$ the semidirect product associated with the diagonal action $\sigma=\sigma_1 \times \sigma_2\times \cdots\times\sigma_m :Q\ca N_1\times N_2\times\cdots\times N_m $. Denote by $\emm=\El(\G)$ be the corresponding II$_1$ factor. Assume that $\Lambda$ is any arbitrary group and $\Theta: \El(\Gamma)\rar \El(\Lambda)$ is any $\ast$-isomorphism. Then there exist groups action by automorphisms $H\ca^{\tau_i} K_i$  such that $\La= (K_1\times K_2\times \cdots\times K_m) \rtimes_{\tau} H$ where $\tau=\tau_1\times \tau_2\times\cdots\times \tau_m: H\ca K_1\times K_2\times\cdots\times K_m$ is the diagonal action. Moreover one can find a multiplicative character $\eta:Q\rar \mathbb T$, a group isomorphism $\delta: Q\rar H$, a unitary $w\in \El(\Lambda)$, and $\ast$-isomorphisms $\Theta_i: \El(N_i)\rar \El(K_i)$ such that for all $x_i \in L(N_i)$ and $g\in Q$ we have 
	\begin{equation}
	\Theta((x_1\otimes x_2\otimes\cdots\otimes x_m) u_g)= \eta(g) w ((\Theta_1(x_1)\otimes \Theta_2(x_2)\otimes\cdots\otimes(\Theta_m(x_m))) v_{\delta(g)})w^*.
	\end{equation}      
	Here $\{u_g \,|\,g\in Q\}$ and $\{v_h\,|\, h\in H\}$ are the canonical unitaries implementing the actions of $Q \ca \El(N_1)\bar\otimes \El(N_2)\bar{\otimes}\cdots\bar{\otimes}\El(N_m)$ and $H\ca \El(K_1)\bar\otimes \El(K_2)\bar{\otimes}\cdots\bar{\otimes}\El(K_m)$, respectively.
\end{theorem}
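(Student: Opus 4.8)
The plan is to run Popa's deformation/rigidity machinery through the Ioana--Popa--Vaes comultiplication. After transporting everything to $\El(\La)$ via $\Theta$ and identifying $\El(\G)\cong\El(\La)=\emm$, I would introduce the embedding $\Delta:\emm\rar\emm\bten\emm$, $\Delta(v_\lambda)=v_\lambda\otimes v_\lambda$, and study $\Delta(\El(Q))$ and $\Delta(\El(N_i))$. The first objective is to locate the acting algebra $\El(Q)$. Since $Q=Q_1\times\cdots\times Q_n$ is a product of biexact, weakly amenable, property (T) groups, I would feed the factors $Q_i$ into the Ozawa--Popa and Chifan--de Santiago--Sinclair style dichotomies to constrain where $\Delta(\El(Q))$ can sit inside the tensor square $\emm\bten\emm$. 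Lemma~\ref{io11gen} is exactly the tool that kills the degenerate alternative: if $\Delta(\El(Q))$ intertwined into a corner $\mathcal K\bten 1$, then $\El(Q)\prec_{\emm}\mathbb C1$, which is impossible because $\El(Q)$ is diffuse. The property (T) of $\G$ is what makes this rigidity transfer through $\Delta$ effective, upgrading the asymptotic intertwining statements into genuine corner embeddings.

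The geometric heart of the argument comes next, and rests on the Rips construction. Because each fiber $N_i\rtimes_{\sigma_i}Q$ is hyperbolic relative to $\{Q\}$, the subgroup $Q$ is almost malnormal in each fiber, and a diagonal computation shows this persists in the fiber product $\G$. Once the previous step guarantees that the reconstructed acting algebra does not intertwine into the core $\El(N)=\El(N_1)\bten\cdots\bten\El(N_m)$, I would invoke Lemma~\ref{malnormalcontrol} to show that every quasinormalizing element is forced to lie over $Q$. Combining this with a quasinormalizer computation inside $\La$ produces a normal subgroup $\Sg\trianglelefteq\La$ together with a unitary $w\in\El(\La)$ such that $\Theta(\El(N))=w\El(\Sg)w^*$, and a complementary subgroup $\Phi\leqslant\La$ with $w^*\Theta(\El(Q))w=\El(\Phi)$. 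This yields the semidirect decomposition $\La=\Sg\rtimes\Phi$ and a group isomorphism $\delta:Q\rar\Phi$.

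It then remains to extract the explicit intertwining formula and to split the core. Tracking the one-dimensional ambiguity in how the canonical unitaries $\{u_g\}$ are carried to $\{v_{\delta(g)}\}$ produces a multiplicative character $\eta:Q\rar\mathbb T$ and the identity $\Theta((x_1\otimes\cdots\otimes x_m)u_g)=\eta(g)\,w\,((\Theta_1(x_1)\otimes\cdots\otimes\Theta_m(x_m))v_{\delta(g)})\,w^*$. For the product decomposition, since $\Theta(\El(N))=w\El(\Sg)w^*$ and $\El(N)$ is a tensor product of $m$ prime (property (T), hyperbolic-fiber) factors, I would apply unique prime factorization in the spirit of \cite{CdSS15}, as used in \cite{CDK19}, to the group $\Sg$. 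This yields a product decomposition $\Sg=K_1\times\cdots\times K_m$ with $\ast$-isomorphisms $\Theta_i:\El(N_i)\rar\El(K_i)$ matched to the tensor factors, and shows that $\delta$ transports the diagonal $Q$-action to a diagonal $H=\Phi$-action $\tau=\tau_1\times\cdots\times\tau_m$.

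The step I expect to be the genuine obstacle is the first localization fused with the malnormality control: proving that $\Delta(\El(Q))$ lands, up to unitary conjugacy, exactly over the acting group and not partly inside the core. The difficulty is that the diagonal action entangles the $m$ fibers, so the biexactness dichotomy for the product $Q=\prod_i Q_i$ must be applied simultaneously across all fibers while the peripheral (relatively hyperbolic) structure is preserved. The passage from $\mathscr S_{2,2}$ to general $\mathscr S_{m,n}$ is precisely where one must iterate the product-rigidity and biexactness arguments and check that they remain compatible with the single diagonal copy of $Q$ and with all $m$ commuting cores at once.
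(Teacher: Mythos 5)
Your overall architecture matches the paper's: comultiplication $\Delta$ along $\La$, location of $\Delta(\El(Q))$ and $\Delta(\El(N_i))$ via the relative-hyperbolicity/biexactness dichotomies, malnormality of $Q$ in $\G$ to control quasinormalizers, and a semidirect decomposition $\La=\Sg\rtimes\Phi$. However, there are two places where your sketch jumps over the actual content. First, you write that producing $\Phi\leqslant\La$ with $w^*\Theta(\El(Q))w=\El(\Phi)$ ``yields a group isomorphism $\delta:Q\rar\Phi$'' and that the character $\eta$ comes from ``tracking the one-dimensional ambiguity.'' An isomorphism $\El(Q)\cong\El(\Phi)$ does not by itself yield $Q\cong\Phi$ --- that is precisely the kind of statement the whole theorem is about. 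The paper gets this by proving the height estimate $h_{Q\times Q}(u\Delta(\El(Q))u^*)>0$, transferring it to $h_Q(\mu\Phi\mu^*)>0$ via \cite[Lemma~2.4, 2.5]{CU18}, and then invoking \cite[Theorem~3.1]{IPV10} to conclude $\mathbb T\mu_0\Phi\mu_0^*=\mathbb T Q$, which is what simultaneously produces $\delta$ and $\eta$. Without the height argument this step is unjustified.

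Second, your plan for splitting the core --- apply unique prime factorization in the spirit of \cite{CdSS15} to $\Sg$ using that $\El(N)$ is a tensor product of $m$ prime factors --- is not what the paper does and would not suffice. UPF would at best give a tensor decomposition of $\El(\Sg)$ into factors isomorphic to amplifications $\El(N_i)^{t_i}$; it gives neither a group-level direct product $\Sg=K_1\times\cdots\times K_m$ nor unamplified identifications $\Theta_i:\El(N_i)\rar\El(K_i)$, both of which are needed to define the automorphic actions $\tau_i$ and to state the conclusion. Moreover, primality of $\El(N_i)$ is not established (the $N_i$ from the Rips construction are not hyperbolic, so \cite{CdSS15}-type product rigidity does not apply to the core; in the paper that result is applied only to $Q=Q_1\times\cdots\times Q_n$ with $Q_i$ biexact). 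The paper instead shows $\Delta(u\El(\widehat N_j)u^*)\subseteq\El(\Sg)\bten u\El(\widehat N_i)u^*$, rules out $i\neq j$ by an orthogonality argument, applies the IPV-type \cite[Lemma~2.8]{CDK19} to realize $u\El(\widehat N_j)u^*=\El(\Sg_j)$ for subgroups $\Sg_j\leqslant\Sg$, and then computes virtual centralizers as in \cite{CdSS17} to obtain $\Sg=\Sg_j\times\Sg_j^1$. You would need to replace your UPF step with an argument of this kind.
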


From a different perspective our theorem can be also seen as a von Neumann algebraic superrigidity result regarding conjugacy of actions on noncommutative von Neumann algebras. Notice that very little is known in this direction as well, as most of the known superrigidity results concern algebras arising from actions of groups on probability spaces.    
\vskip 0.07in
We continue with a series of preliminary results that are essential to derive the proof of Theorem \ref{semidirectprodreconstruction} at the end of the section. First we recall a location result for commuting diffuse property (T) subalgebras inside a von Neumann algebra arising from products of relative hyperbolic groups from \cite{CDK19}.

\begin{theorem}\cite[Theorem~5.2]{CDK19}\label{controlprodpropt1} For $i=1, ...,n$ let  $H_i<G_i$ be an inclusion of infinite groups such that $H_i$ is residually finite and $G_i$ is hyperbolic relative to $H_i$. Denote by $H=H_1\times ...\times H_n < G_1\times ...\times G_n=G$ the corresponding direct product inclusion. Let $\enn_1,\enn_2 \subseteq \El(G)$ be two commuting von Neumann subalgebras with property (T). Then for every $k\in\overline{1,n}$ there exists $i\in \overline{1,2}$ such that $\enn_i \prec \El(\hat G_k\times H_k)$, where $\hat G_k :=\times _{j\neq k} G_j$.

\end{theorem}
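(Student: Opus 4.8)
The plan is to prove the assertion one coordinate at a time: fix $k\in\overline{1,n}$, write $\mathcal M=\El(G)=\El(\hat G_k)\bten\El(G_k)$, and reduce everything to a single relative bi-exactness dichotomy governed by the fact that the $k$-th factor $G_k$ is hyperbolic relative to $H_k$. Set $L_k:=\hat G_k\times H_k\leqslant G$, so that the goal becomes showing that $\enn_1\prec_{\mathcal M}\El(L_k)$ or $\enn_2\prec_{\mathcal M}\El(L_k)$.

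The first step is purely geometric group theory and is essentially the only place it enters. Since $G_k$ is hyperbolic relative to the (exact) subgroup $H_k$, Ozawa's analysis \cite{Oz03} shows $G_k$ is bi-exact relative to $\{H_k\}$; moreover each remaining factor $G_j$ ($j\neq k$) is relatively hyperbolic with exact peripheral structure and is therefore exact, so $\hat G_k=\times_{j\neq k}G_j$ is exact. Invoking the stability of relative bi-exactness under direct products with exact groups, the ambient group $G=\hat G_k\times G_k$ is bi-exact relative to the single subgroup $L_k$. From here the argument is entirely von Neumann algebraic.

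Next I would apply the relative solidity dichotomy attached to relative bi-exactness (the relative form of Ozawa's solidity theorem, in the spirit of \cite{CdSS15,CDK19}). Applied to $\enn_1\subseteq\mathcal M$ it yields the alternative: either $\enn_1\prec_{\mathcal M}\El(L_k)$, in which case we finish with $i=1$; or the relative commutant $\enn_1'\cap\mathcal M$ is amenable relative to $\El(L_k)$ inside $\mathcal M$. Because $\enn_1$ and $\enn_2$ commute we have $\enn_2\subseteq\enn_1'\cap\mathcal M$, so in the second alternative $\enn_2$ is itself amenable relative to $\El(L_k)$. (If $\enn_1$ fails to be diffuse one first cuts by a suitable projection and works in the corresponding corner; as property (T) factors are of type $\mathrm{II}_1$ this is only a bookkeeping matter.)

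Finally I would close the second alternative using property (T) of $\enn_2$, via the standard rigidity principle that a property (T) subalgebra which is amenable relative to $\mathcal P$ must embed into $\mathcal P$. Concretely, if $\enn_2\nprec_{\mathcal M}\El(L_k)$, then by Popa's criterion the coarse-type bimodule $L^2(\mathcal M)\bten_{\El(L_k)}L^2(\mathcal M)$ carries no nonzero $\enn_2$-central vector, while relative amenability produces an $\enn_2$-central state on the associated basic construction and hence, by a convexity argument, a net of almost $\enn_2$-central unit vectors in that bimodule; property (T) then upgrades these to a genuine central vector, a contradiction. Therefore $\enn_2\prec_{\mathcal M}\El(L_k)$ and we finish with $i=2$. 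The main obstacle is this analytic step: correctly formulating and invoking the relative bi-exactness dichotomy for the product $G$ in its relative-commutant form, obtaining amenability relative to $\El(L_k)$ rather than merely relative to $\mathcal M$, and combining it with the product stability of relative bi-exactness. I expect this to be the genuine crux, whereas the property (T) implication and the geometric input are comparatively standard.
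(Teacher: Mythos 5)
This statement is imported verbatim from \cite[Theorem~5.2]{CDK19} and is not reproved in the present paper, so your proposal has to be judged on its own terms. Your architecture (fix $k$, run a dichotomy driven by the $k$-th factor, close the second branch with property (T)) is the right one, and the final step --- a property (T) subalgebra that is amenable relative to $\El(\hat G_k\times H_k)$ must embed into it --- is standard and correct. The genuine gap is exactly at the step you flag as the crux: the claim that $G=\hat G_k\times G_k$ is bi-exact relative to the \emph{single} subgroup $L_k=\hat G_k\times H_k$ is false whenever $\hat G_k$ is non-amenable, which is the situation of interest. Indeed, if $\mu:G\rightarrow {\rm Prob}(G)$ witnessed bi-exactness relative to $\{L_k\}$, then testing the bi-equivariance condition on elements $x=(e,x_k)$ with $x_k\rightarrow\infty$ relative to $H_k$ (such $x$ do escape all sets $FL_kF'$) against $g=(g',e)$ and $h=(h',e)$ with $g',h'\in\hat G_k$ forces $\|g'\cdot\nu_{x}-\nu_{x}\|_1\rightarrow 0$ for the push-forward $\nu_x$ of $\mu(x)$ to $\hat G_k$, i.e.\ it produces an invariant mean on $\hat G_k$. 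Already $\mathbb F_2\times\mathbb F_2$ fails to be bi-exact relative to $\{\mathbb F_2\times 1\}$. What product stability actually gives is that $G$ is bi-exact relative to the \emph{family} $\{\hat G_j\times H_j\}_{j\in\overline{1,n}}$, and the associated dichotomy for commuting subalgebras then only locates some $\enn_i$ inside $\El(\hat G_j\times H_j)$ for \emph{some} $j$ --- not for the fixed $k$ you started with, which is precisely the extra strength the theorem asserts. (A secondary unjustified point: exactness of $\hat G_k$, and even bi-exactness of $G_k$ relative to $\{H_k\}$, require exactness of the peripheral subgroups, whereas the hypotheses only give residual finiteness of the $H_j$.)

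The intermediate dichotomy you want for the fixed $k$ is nevertheless true; it just cannot be reached through a group-level bi-exactness statement about $G$. The correct route is the coefficient (crossed product) form of the dichotomy: write $\El(G)=\El(\hat G_k)\bar\otimes\El(G_k)$, i.e.\ the crossed product of $\mathcal N:=\El(\hat G_k)$ by the trivial action of $G_k$, and apply the relative amenability/intertwining alternative for commuting subalgebras of $\mathcal N\rtimes G_k$ when $G_k$ is hyperbolic relative to $\{H_k\}$ (this is the form used in \cite{CDK19}, resting on \cite{PV12}-type arguments with coefficients). That statement carries $\mathcal N$ along untouched and outputs exactly what you need: either $\enn_1\prec_{\El(G)}\mathcal N\bar\otimes\El(H_k)=\El(\hat G_k\times H_k)$, or $\enn_2\subseteq\enn_1'\cap\El(G)$ is amenable relative to $\El(\hat G_k\times H_k)$ inside $\El(G)$, after which your property (T) upgrade finishes the proof. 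As written, however, the proposal's justification of its key dichotomy rests on a false assertion, so the argument does not go through without this repair.
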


\begin{theorem}\label{controlprodpropt2} Under the same assumptions as in Theorem \ref{controlprodpropt1} for every $k\in \overline{1,n}$ one of the following must hold \begin{enumerate}
\item [1)] there exists $i\in 1,2$ such that $\enn_i \prec_\emm \El(\hat G_k)$; 
\item [2)] $\enn_1\vee \enn_2 \prec_{\emm} \El(\hat G_k \times H_k)$.
\end{enumerate}

\end{theorem}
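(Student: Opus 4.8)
The plan is to bootstrap the conclusion of Theorem~\ref{controlprodpropt1} using the almost malnormality of the peripheral subgroups together with the commutation $[\enn_1,\enn_2]=0$. Fix $k\in\overline{1,n}$ and write $G=\hat G_k\times G_k$, so that $\emm=\El(G)=\El(\hat G_k)\bten\El(G_k)$. Viewing $G_k$ as acting trivially on the coefficient algebra $\enn:=\El(\hat G_k)$, we have $\emm=\enn\rtimes G_k$ and $P:=\El(\hat G_k\times H_k)=\enn\rtimes H_k$. Since $G_k$ is hyperbolic relative to $H_k$, the peripheral subgroup $H_k$ is almost malnormal in $G_k$, so this inclusion is exactly of the type to which Lemma~\ref{malnormalcontrol} applies. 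By Theorem~\ref{controlprodpropt1} there is $i\in\overline{1,2}$ with $\enn_i\prec_\emm P$; after relabeling I assume $i=1$, i.e.\ $\enn_1\prec_\emm P$.

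Next I would split into two cases according to whether $\enn_1$ can be pushed all the way into the core. If $\enn_1\prec_\emm\El(\hat G_k)=\enn$, then conclusion (1) holds and we are done. So assume from now on that $\enn_1\nprec_\emm\enn$; the goal becomes to establish conclusion (2), namely $\enn_1\vee\enn_2\prec_\emm P$.

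For the core argument, fix a Popa intertwiner for $\enn_1\prec_\emm P$: projections $p\in\enn_1$, $q\in P$, a $\ast$-homomorphism $\theta\colon p\enn_1 p\rightarrow qPq$ and a nonzero partial isometry $v\in q\emm p$ with $\theta(x)v=vx$ for all $x\in p\enn_1 p$; put $\mathcal A:=\theta(p\enn_1 p)\subseteq P$. First, transferring non-intertwining through $v$ shows $\mathcal A\nprec_\emm\enn$ (otherwise $\enn_1\prec_\emm\enn$, contradicting our case), hence also $\mathcal A\nprec_P\enn$. Second, I would exploit the commutation: from $\theta(x)v=vx$ one gets $v^\ast\theta(x)=xv^\ast$, and since every $y\in\enn_2$ commutes with $p\enn_1 p\subseteq\enn_1$, a direct computation yields $\theta(x)\,(vyv^\ast)=(vyv^\ast)\,\theta(x)$; that is, $v\enn_2 v^\ast$ commutes with $\mathcal A$ and in particular each of its elements one-sidedly quasinormalizes $\mathcal A$ inside $\emm$. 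Applying Lemma~\ref{malnormalcontrol} to $\mathcal A\subseteq P=\enn\rtimes H_k$ (legitimate because $\mathcal A\nprec_P\enn$ and $H_k$ is almost malnormal in $G_k$) forces $\mathscr{QN}^{(1)}_\emm(\mathcal A)''\subseteq P$, whence $v\enn_2 v^\ast\subseteq P$. Combined with $v(p\enn_1 p)v^\ast=\theta(p\enn_1 p)vv^\ast\subseteq P$, this exhibits a corner of $\enn_1\vee\enn_2$ conjugated by $v$ into $P$, giving $\enn_1\vee\enn_2\prec_\emm P$, which is conclusion (2).

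The hard part, I expect, is the partial-isometry and corner bookkeeping needed to turn the two set-theoretic containments $v(p\enn_1 p)v^\ast\subseteq P$ and $v\enn_2 v^\ast\subseteq P$ into a genuine intertwining of the join $\enn_1\vee\enn_2$: one must choose $v$ (equivalently the support projections $p$ and $v^\ast v$) so that $\mathrm{Ad}(v)$ is multiplicative on the corner generated by $\enn_1$ and $\enn_2$, using that $p\in\enn_1$ automatically commutes with $\enn_2$, and one must verify that the commutation relation and the hypothesis $\mathcal A\nprec_P\enn$ survive cutting down by these projections. It is precisely here that the almost malnormality of the peripheral subgroup $H_k$ --- through Lemma~\ref{malnormalcontrol} --- does the essential work of preventing $\enn_2$ from escaping $P$.
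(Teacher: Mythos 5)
You should first note that the paper itself contains no proof of this statement: Theorem~\ref{controlprodpropt2} is recorded without argument (it is essentially \cite[Theorem~5.3]{CDK19}), so your proposal has to stand on its own. Its architecture is the right one and matches the strategy in \cite{CDK19}: split on whether $\enn_1\prec_\emm \El(\hat G_k)$, and otherwise use almost malnormality of the peripheral subgroup $H_k\leqslant G_k$ together with Lemma~\ref{malnormalcontrol} (applied to $\mathcal A=\theta(p\enn_1 p)\subseteq qPq$ with $\mathcal A\nprec_P\El(\hat G_k)$) to force $vv^*$ and $v\enn_2 v^*$ into $P=\El(\hat G_k\times H_k)$. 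The commutation computation $\theta(x)(vyv^*)=(vyv^*)\theta(x)$ is correct, and the reduction $\enn_1\nprec_\emm\El(\hat G_k)\Rightarrow\mathcal A\nprec_P\El(\hat G_k)$ is the standard transitivity of intertwining.

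The genuine gap is the step you defer to ``bookkeeping.'' The two set containments $v(p\enn_1 p)v^*\subseteq P$ and $v\enn_2 v^*\subseteq P$ do \emph{not} yield $\enn_1\vee\enn_2\prec_\emm P$ by themselves, and your proposed remedy --- ``choose $v$ so that $\mathrm{Ad}(v)$ is multiplicative on the corner generated by $\enn_1$ and $\enn_2$'' --- is not available: the standard construction only gives $v^*v\in(p\enn_1 p)'\cap p\emm p$, and there is no general way to arrange that $e:=v^*v$ commutes with $\enn_2 p$ (equivalently, that $e$ lies in $p(\enn_1\vee\enn_2)p$ or its commutant); $\enn_2 p$ and $e$ both sit in $(p\enn_1 p)'\cap p\emm p$, but two elements of a relative commutant need not commute with each other. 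Without this, $\mathrm{Ad}(v)$ is merely a normal c.p.\ map on $p(\enn_1\vee\enn_2)p$, and Popa's criterion is not verified: for $u\in\mathscr U(p(\enn_1\vee\enn_2)p)$ one computes $\|E_{P}(vuv^*)\|_2^2=\tau\bigl(u^*eue\bigr)$, which has no uniform lower bound over $u$ when $e$ fails to commute with the join (for a single pair $x=v$, $y=v^*$ this quantity can even vanish). Note that the argument \emph{would} close if $e$ belonged to the algebra being intertwined --- then $\mathrm{Ad}(v)$ restricted to the corner cut by $e$ is a genuine $\ast$-homomorphism satisfying $\psi(z)v=vz$ --- which is why the analogous statement for $\enn_1\vee\bigl((p\enn_1 p)'\cap p\emm p\bigr)$ is easy; but $\enn_2$ is in general a proper subalgebra of that relative commutant, and bridging this difference is precisely the nontrivial content of alternative 2), handled in \cite[Theorem~5.3]{CDK19} by further argument rather than by a choice of $v$.
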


We now proceed towards proving the main result of this chapter. To simplify the exposition we first introduce a notation that will be used throughout the section.  
\begin{notation}\label{semidirectt} Denote by $Q= Q_1\times Q_2\times \cdots\times Q_n$, where $Q_i$ are infinite, residually finite, biexact, property (T), icc groups. Then consider $\G_i = N_i \rtimes Q\in \mathcal Rip_T(Q)$ and consider the semidirect product $\G= (N_1\times N_2\times\cdots\times N_m)\rtimes_\sigma Q$ arising from the diagonal action  $\sigma=\sigma_1\times \cdots\sigma_m: Q\rar Aut(N_1\times N_2\times\cdots\times N_m)$, i.e. $\sigma_g (n_1,\cdots,n_m)=( (\sigma_1)_g(n_1),\cdots, (\sigma_m)_g(n_m))$ for all $(n_1,\cdots,n_m)\in N_1\times N_2\times\cdots\times N_m$. For further use we observe that $\G$ is the fiber product $\G=\times_Q\G_i $ and thus embeds into $\G_1\times\cdots\times  \G_m$ where $Q$ embeds diagonally into $Q\times\cdots\times  Q$. Over the next proofs when we refer to this copy we will often denote it by $diag(Q)$. Also notice that $\G$ is an icc group with property (T) as it arises from an extension of property (T) groups. 
\end{notation}

\begin{theorem}\label{commutationcontrolincommultiplication} Let $\G$ be a group as in Notation \ref{semidirectt} and assume that $\Lambda$ is a group such that $\El(\Gamma)=\El(\Lambda)=\emm$. Let $\Delta: \emm\rar \emm\bar \otimes \emm$ be the ``comultiplication along $\Lambda$'' i.e. $\Delta (v_\lambda)=v_\lambda \otimes v_\lambda$. Then the following hold:
\begin{enumerate}
\item [3)] for all $j\in \overline{1,m}$ and every proper subset $S\subseteq \{1,\cdots,m\}$ we have either $\Delta(\El( \widehat N_S))\prec_{\emm\bar\otimes \emm} \emm\bar\otimes \El(\widehat N_j)$ or $\Delta(\El(N_S))\prec_{ \emm \bten \emm} \emm\bar\otimes \El(\widehat N_j)$, and
\item [4)] 
\begin{enumerate} \item [a)] for all $j\in\overline{1,m}$ and every proper subset $S\subseteq \{1,\cdots,m\}$ we have either $\Delta (\El(Q_S))\prec_{\emm\bar\otimes \emm} \emm\bar\otimes \El(\widehat N_j)$ or  $\Delta (\El(\widehat Q_S))\prec_{\emm\bar\otimes \emm} \emm\bar\otimes \El(\widehat N_j)$, or
\item [b)]$\Delta(\El(Q))\prec_{\emm\bar\otimes \emm} \emm\bar\otimes \El(Q)$; moreover in this case for every $j\in 1,2$ there is $i\in 1,2$ such that $\Delta (\El(Q_j))\prec_{\emm\bar\otimes \emm} \emm\bar\otimes \El(Q_i)$ 
\end{enumerate}   
\end{enumerate}

\end{theorem}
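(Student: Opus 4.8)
The plan is to leverage that the comultiplication $\Delta\colon\emm\to\emm\bten\emm$ is a trace-preserving $\ast$-embedding: it sends commuting subalgebras to commuting subalgebras and preserves property (T). Consequently the whole statement reduces to \emph{locating the images of commuting pairs of property (T) subalgebras} inside $\emm\bten\emm$, which is precisely the situation handled by Theorems \ref{controlprodpropt1} and \ref{controlprodpropt2}. To set up those theorems I first record that, by Notation \ref{semidirectt}, $\G$ embeds into $\G_1\times\cdots\times\G_m$ with each $\G_i=N_i\rtimes Q$ hyperbolic relative to the residually finite peripheral subgroup $Q$; hence $\emm\bten\emm=\El(\G\times\G)$ sits inside $\El\big((\G_1\times\cdots\times\G_m)\times(\G_1\times\cdots\times\G_m)\big)$, a factor of a product of $2m$ relatively hyperbolic groups with all peripheral pieces equal to $Q$. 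This is exactly the input of Theorem \ref{controlprodpropt1} with $G=\prod_{i=1}^{2m}\G_i$ and $H_i=Q$, and for the coordinate $k$ singling out the $j$-th factor in the second tensor leg the corner $\El(\widehat G_k\times H_k)$ contains $\emm\bten\El(\widehat N_j)$.

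For part (3) I would feed in the commuting pair $\enn_1=\Delta(\El(N_S))$ and $\enn_2=\Delta(\El(\widehat N_S))$; these commute because $N_S$ and $\widehat N_S$ are complementary direct factors of $N$, and they have property (T) because every $N_i$ is a property (T) group. Applying Theorem \ref{controlprodpropt2} at the coordinate $k$ that isolates $\G_j$ in the second leg yields the dichotomy: either one of $\enn_1,\enn_2$ intertwines into $\emm\bten\El(\widehat\G_j)$, or their join intertwines into $\emm\bten\El(\widehat\G_j\times Q)$. In either case the target still carries the full relatively hyperbolic group $\widehat\G_j$ (and possibly an extra $Q$), so the remaining job is to shrink it to $\emm\bten\El(\widehat N_j)$. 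Here I would use the almost malnormality of the peripheral subgroup $Q$ inside each $\G_i$ together with Lemma \ref{malnormalcontrol} to eliminate the $Q$-directions, obtaining the stated dichotomy, after transferring the intertwining from the overalgebra $\El(\prod\G_i\times\prod\G_i)$ down to $\emm\bten\emm$ via the transfer mechanism behind Lemma \ref{io11gen} and \cite[Theorem~2.4]{CDK19}.

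Part (4) runs the same template on the peripheral pieces, taking the commuting property (T) pair $\Delta(\El(Q_S))$ and $\Delta(\El(\widehat Q_S))$. Either one of them is absorbed into an $N$-direction, i.e.\ intertwines into $\emm\bten\El(\widehat N_j)$ (alternative (a)), or neither is, in which case I would argue that $\Delta(\El(Q))=\Delta(\El(Q_S)\vee\El(\widehat Q_S))$ is forced into the peripheral algebra $\emm\bten\El(Q)$ (alternative (b)). The degenerate scenario in which $\Delta(\El(Q))$ collapses into a trivial corner is ruled out by Lemma \ref{io11gen}, which would otherwise yield $\El(Q)\prec_\emm\mathbb C1$, impossible since $Q$ is infinite. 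Finally, to refine $\Delta(\El(Q))\prec\emm\bten\El(Q)$ to the coordinatewise statement $\Delta(\El(Q_j))\prec\emm\bten\El(Q_i)$, I would use that $Q=Q_1\times\cdots\times Q_n$ is a product of \emph{biexact} groups and invoke the Ozawa-type localization of commuting subalgebras inside a biexact product to separate the factors $\El(Q_i)$ in the second leg.

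The step I expect to be the main obstacle is the coarse-to-fine refinement in both parts. Theorems \ref{controlprodpropt1}--\ref{controlprodpropt2} only deposit the images into the oversized algebras $\El(\widehat\G_j\times Q)$ and $\El(Q)$, and sharpening these to $\El(\widehat N_j)$ and $\El(Q_i)$ demands careful use of the malnormality of $Q$, repeated transfers between $\emm$ and the overalgebra $\El(\prod\G_i)$ (of which Lemma \ref{io11gen} is the prototype), and in particular a delicate treatment of the ``join'' alternative of Theorem \ref{controlprodpropt2}(2), where the two commuting subalgebras need not localize individually.
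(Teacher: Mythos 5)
Your proposal follows essentially the same route as the paper: embed $\emm\bten\emm$ into $\El\big((\G_1\times\cdots\times\G_m)^{2}\big)$, apply Theorem \ref{controlprodpropt2} to the commuting property (T) pairs $\Delta(\El(N_S)),\Delta(\El(\widehat N_S))$ and $\Delta(\El(Q_S)),\Delta(\El(\widehat Q_S))$, rule out the degenerate case with Lemma \ref{io11gen}, and finish the moreover part of 4b) with \cite[Theorem 1.4]{PV12} via biexactness, weak amenability and property (T). The only notable divergence is in the coarse-to-fine step you correctly flag as the main obstacle: the paper shrinks $\El(\widehat\G_j)$ to $\El(\widehat N_j)$ by the group-intersection lemmas \cite[Lemma 2.3--2.5]{CDK19} rather than Lemma \ref{malnormalcontrol}, and handles the join alternative not by malnormality alone but by pushing the intertwining homomorphism forward and re-applying Theorem \ref{controlprodpropt2} inside $\emm\bten\El(\widehat N_1\rtimes{\rm diag}(Q))$ iteratively.
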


\begin{proof} Let $\tilde \emm=\El(\G_1\times \G_2\times\cdots\times \G_m)$. Since $\G<\G_1\times \G_2\times\cdots\times \G_m$ we notice the following inclusions $\Delta(\El(N_i)), \Delta
 (\El(\hat N_i))\subset \emm\bar \otimes \emm = \El(\G\times \G)\subset \El(\G_1\times \G_2\times\cdots\times \G_m \times \G_1\times \G_2\times \cdots\times\G_m)$. Since $\G_i$ is hyperbolic relative to $Q$ then using Theorem \ref{controlprodpropt2} we have either 
 \begin{enumerate}
 \item [5)]for every $i\in\overline{1,m}$, $\Delta(\El(N_i))\prec_{\tilde \emm \bten \tilde \emm} \emm\bar\otimes \El(\hat \G_1)$, or  $\Delta(\El(\hat N_i))\prec_{\tilde \emm \bten \tilde \emm} \emm\bar\otimes \El(\hat \G_1)$, or
 \item [6)]$\Delta(\El(N))\prec_{\tilde \emm \bten \tilde \emm} \emm\bten \El(\hat \G_1\times Q)$
\end{enumerate}  
Assume $\De(\El(N_i))\subset \emm\bten \El(\G)$ in $5)$, then by \cite[Lemma~2.3]{CDK19} there is a $h\in \G \times \G $ so that $\De(\El(N_i))\prec_{\tilde \emm \bten \tilde \emm} \El(\G\times \G \cap h (\G \times \hat \G_1) h^{-1}))=\El(\G \times (\G\cap\hat \G_1 ))= \emm\bten \El(N \rtimes diag(Q))\cap (\hat N_1 \rtimes Q\times 1))= \emm\bten \El(\hat N_1)$. Note that since $\Delta(\El(N_i))$ is regular in $\emm \tp \emm$, using \cite[Lemma~2.4]{CDK19}, we get that $\Delta(\El(N_i))\prec_{ \emm \bten \emm} \emm\bar\otimes \El(\hat N_1)$.

If we assume $\De(\El(\hat N_i))\subset \emm\bten \El(\G)$ in $5)$, arguing similarly we get that $\Delta(\El(\hat N_i))\prec_{ \emm \bten \emm} \emm\bar\otimes \El(\hat N_1)$, thereby establishing 3).

Assume 6). we can follow the proof on Theorem of \cite{CDK19} to conclude that $3)$ holds.

\vskip 0.05in 
Next we derive 4). Again we notice that $\De(\El(Q_1))$, $\De(\El(\hat Q_1))\subset \De(\emm)\subset \emm\bten \emm =\El(\G\times \G)\subset \El(\G_1\times \G_2 \times\cdots\times\G_m\times\G_1\times \G_2\times\cdots\times\G_m)$. Using Theorem \ref{controlprodpropt2} we must have that either 
\begin{enumerate}
\item[7)]\label{int5}  $\De(\El(Q_i))\prec_{\tilde \emm\bten \tilde \emm } \emm\bten \El(\hat \G_1)$, or $\De(\El(\hat Q_i))\prec_{\tilde \emm\bten \tilde \emm } \emm\bten \El(\hat \G_1)$, or 
\item [8)]\label{int6} $\De(\El(Q))\prec_{\tilde \emm\bten \tilde \emm} \emm\bten \El(\hat \G_1 \times Q)$.
\end{enumerate}  

Proceeding exactly as in the previous case, and using \cite[Lemma~2.4]{CDK19}, we see that  7) implies $\De(\El(Q_i))\prec_{\emm\bten \emm} \emm\bten \El(\hat N_1)$ or $\De(\El(\hat Q_i))\prec_{\emm\bten \emm} \emm\bten \El(\hat N_1)$ which in turn gives  $4a)$. Also proceeding as in the previous case, and using \cite[Lemma~2.5]{CDK19}, we see that 8) implies 
\begin{equation}\label{int7}\De(\El(diag (Q))\prec_{ \emm\bten \emm} \emm\bten \El(\hat N_1 \rtimes diag(Q)).  
\end{equation}

To show the part $4b)$ we will exploit \eqref{int7}. Notice that there exist nonzero projections $ r\in \De(\El(Q))$, $t\in \emm\bten \El(\hat N_1\rtimes diag(Q))$, nonzero partial isometry $w\in r(\emm\bten \emm) t$ and $\ast$-isomorphism onto its image $\phi: r\De(\El(Q))r\rightarrow \mathcal C:= \phi(r\De(\El(Q))r)\subseteq t(\emm \bten \El(\hat N_1\rtimes diag(Q)))t$ such that \begin{equation}\phi(x)w=wx \text{ for all }  x\in r\De(\El(Q))r.\end{equation}
Since $\El(Q)$ is a factor we can assume without loss of generality that $r=\De(r_1\otimes r_2)$ where $r_1\in \El(Q_i) $ and $r_2\in \El(\hat Q_i)$. Hence $\mathcal C= \phi(r \De(\El(Q))r)= \phi(\Delta (r_1 \El(Q_i)r_2))\bten r_2 \El(\hat Q_i) r_2=: \mathcal C_1\vee \mathcal C_2$ where we denoted by $\mathcal C_1= \phi(\Delta (r_1 \El(Q_1))r_1)\subseteq t(\emm\bten \El(\hat N_1 \rtimes {\rm diag}(Q)))t$ and $\mathcal C_2= \phi(\Delta (r_2 \El(\hat Q_1))r_2)\subseteq t(\emm\bten \El(\hat N_1 \rtimes {\rm diag}(Q)))t$. Notice that $\mathcal C_i$'s are commuting property (T) subfactors of $\emm \bten \El(\hat N_1 \rtimes {\rm diag} (Q))$. Since $N_i \rtimes Q$ is hyperbolic relative to $\{Q\}$ and seeing $\mathcal C_1\vee \mathcal C_2 \subseteq \emm \bten \El(N_i\rtimes {\rm diag} (Q))\subset \El(\G_1\times \G_2\times\cdots\times\G_m \times \hat{\G_1})$ then by applying Theorem \ref{controlprodpropt2} we have that there exits $i\in 1,2$ such that \begin{enumerate}
\item [9)] $\mathcal C_i \prec_{\bar \emm \bten \bar \emm} \El(\G_1\times \G_2\times\cdots\times \G_m\times\hat \G_{1,2})$ or 
\item [10)] $\mathcal C_1\vee \mathcal C_2 \prec_{\bar \emm\bten \bar \emm_1} \El(\G_1\times \G_2\times \times\cdots\times \G_m\times\hat \G_{1,2} \times Q)$.
\end{enumerate}
Since $\mathcal C_i\subset \emm\bten \emm$ then 9) and \cite[Lemma~2.6]{CDK19} imply that $\mathcal C_1\prec_{\emm \bten \emm} \emm\bar \otimes\emm_1 \bar \otimes 1$ which by Lemma \ref{io11gen} further implies that $\mathcal C_i$ is atomic, which is a contradiction. Thus we must have 10). However since $\mathcal C_1\vee \mathcal C_2\subset \emm \bten \emm$ then 10) and \cite[Lemma~2.6]{CDK19} give that $\mathcal C_1\vee \mathcal C_2 \prec_{\emm \bten \emm} \emm\bten \El({\rm diag} (Q))$ and composing this intertwining with $\phi$ (as done in the proof of the first case in Theorem \ref{controlprodpropt2}) we get that  $\De(\El(Q)) \prec_{\emm\bten \emm} \emm\bten \El({\rm diag} (Q))$. Now we show the moreover part. So in particular the above intertwining shows that we can assume from the beginning that $\mathcal C=\mathcal C_1 \vee \mathcal C_2 \subset t(\emm\bten \El({\rm diag} (Q))) t$. Since $Q_i$ are biexact, weakly amenable then by applying \cite[Theorem 1.4]{PV12} we must have that either $\mathcal C_1 \prec \emm\bten \El({\rm diag} (Q_1))$ or $\mathcal C_2 \prec \emm\bten \El({\rm diag} (Q_1))$ or $\mathcal C_1\vee \mathcal C_2$ is amenable relative to  $\emm\bten \El({\rm diag} (Q_1))$ inside $\emm\bten \emm$. However since $\mathcal C_1 \vee \mathcal C_2$ has property (T) the last case above still entails that $\mathcal C_1\vee \mathcal C_2 \prec \emm\bten \El({\rm diag} (Q_1))$ which completes the proof.  \end{proof}

  \begin{theorem}\label{toproductgroupcorners}Let $\G$ be a group as in Notation \ref{semidirectt} and assume that $\Lambda$ is a group such that $\El(\Gamma)=\El(\Lambda)=\emm$. Let $\Delta: \emm\rar \emm\bar \otimes \emm$ be the ``commultiplication along $\Lambda$'' i.e.\ $\Delta (v_\lambda)=v_\lambda \otimes v_\lambda$. Also assume for every $j\in 1,2$ there is $i\in 1,2$ such that either $\Delta (\El(Q_{\bar i}))\prec_{\emm\bar\otimes \emm} \emm\bar\otimes \El( Q_{\bar j})$ or $\Delta (\El(Q_{\bar i}))\prec_{\emm\bar\otimes \emm} \emm\bar\otimes \El(N_{\bar j})$. Then one can find subgroups $\Phi_1,\Phi_2 \leqslant \Phi\leqslant \La $ such that \begin{enumerate}
  \item $\Phi_1,\Phi_2$ are infinite, commuting, property (T), finite-by-icc groups;
  \item $[\Phi:\Phi_1\Phi_2]<\infty$ and $\mathcal{QN}^{(1)}_\La(\Phi)=\Phi$;
  \item there exist $\mu \in \mathcal U(\emm)$, $z\in \mathcal P(\mathcal Z(\El(\Phi)))$, $h= \mu z\mu^*\in \mathcal P (\El(Q))$ such that 
  \begin{equation} \mu \El(\Phi) z\mu^* = h \El(Q)h.
  \end{equation}
  \end{enumerate}
  
  \end{theorem}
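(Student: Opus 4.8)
The plan is to convert the factorwise comultiplication intertwinings granted by the hypothesis into the single relation $\Delta(\El(Q))\prec_{\emm\bten\emm}\emm\bten\El(Q)$, and then to turn this into an honest subgroup of $\La$ whose group factor is unitarily conjugate, up to corners, to $\El(Q)$; the three conclusions are then read off from the resulting conjugacy.

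First I would eliminate the alternative in which a corner of $\Delta(\El(Q_{\bar i}))$ lands in $\emm\bten\El(N_{\bar j})$. Each $\El(Q_{\bar i})$ is a diffuse property (T) factor, while $Q$ acts on the fibers $N=N_1\times\cdots\times N_m$ through a relatively hyperbolic, hence almost malnormal, action; consequently Lemma~\ref{malnormalcontrol} controls the possible intertwiners and, combined with the degeneracy dichotomy of Lemma~\ref{io11gen}, forces any such intertwining of a diffuse piece into the $N$-part to collapse to an atomic one, contradicting diffuseness. Only the alternative $\Delta(\El(Q_{\bar i}))\prec\emm\bten\El(Q_{\bar j})$ therefore survives for each $j$, and since the factors $\El(Q_1),\dots,\El(Q_n)$ commute and generate $\El(Q)$, I would assemble these into the full intertwining $\Delta(\El(Q))\prec_{\emm\bten\emm}\emm\bten\El(Q)$ by the usual patching of commuting intertwiners into a common group algebra.

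The core step is to reconstruct a genuine subgroup from this relation via the comultiplication technique of \cite{IPV10}, in the form exploited in \cite{CDK19}. Because $\Delta$ is the comultiplication $v_\lambda\mapsto v_\lambda\otimes v_\lambda$, the relation $\Delta(\El(Q))\prec\emm\bten\El(Q)$ is rigid enough to produce a subgroup $\Sg\leqslant\La$, a unitary $\mu_0\in\mathcal U(\emm)$ and nonzero projections for which a corner of $\mu_0\El(Q)\mu_0^\ast$ coincides with a corner of $\El(\Sg)$; running the same argument in the reverse direction and using that $\El(Q)$ is a property (T) factor, one obtains that $\El(\Sg)$ and $\mu_0\El(Q)\mu_0^\ast$ are commensurable. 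I expect this to be the main obstacle: one must pass from the mere existence of an intertwiner to a single unitary implementing the conjugacy on a corner, while guaranteeing that no extraneous part of $\La$ is absorbed.

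Finally, I would take $\Phi:=\mathcal{QN}^{(1)}_\La(\Sg)$ and verify the three conclusions. The identity $\mu\El(\Phi)z\mu^\ast=h\El(Q)h$ with $h=\mu z\mu^\ast$ follows by feeding the commensurability into the fact that the quasinormalizer of a group algebra corresponds to the group quasinormalizer. The self-quasinormalization $\mathcal{QN}^{(1)}_\La(\Phi)=\Phi$ is obtained by transporting the almost malnormality of $Q$ in $\G$ through $\Theta$: any $\lambda\in\La$ quasinormalizing $\Phi$ would, by Lemma~\ref{malnormalcontrol}, force $v_\lambda\in\El(\Phi)$, hence $\lambda\in\Phi$. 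To produce the commuting pieces I transport the product decomposition of $Q$ across the corner isomorphism of conclusion~(3): the images of the two commuting property (T) factors $\El(Q_1)$ and $\El(Q_2\times\cdots\times Q_n)$ are commuting property (T) subfactors of $\El(\Phi)$, and a standard group-subalgebra rigidity argument identifies them with $\El(\Phi_1)$ and $\El(\Phi_2)$ for commuting subgroups $\Phi_1,\Phi_2\leqslant\Phi$ satisfying $[\Phi:\Phi_1\Phi_2]<\infty$; their being infinite, property (T) and finite-by-icc is inherited from the corresponding properties of the factors of $Q$ through the isomorphism of corners.
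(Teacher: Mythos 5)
Your first step does not work as stated, and it is where the real difficulty hides. You propose to rule out the alternative $\Delta(\El(Q_{\bar i}))\prec_{\emm\bten\emm}\emm\bten\El(N_{\bar j})$ by combining Lemma~\ref{malnormalcontrol} with Lemma~\ref{io11gen} so that the intertwining ``collapses to an atomic one.'' But Lemma~\ref{io11gen} only yields degeneracy from an intertwining into $\mathcal K\bten 1$, i.e.\ into the scalars on one tensor leg; an intertwining into $\emm\bten\El(N_{\bar j})$, where $\El(N_{\bar j})$ is a nonamenable II$_1$ factor, carries no such collapse, and almost malnormality of $Q$ in $\G$ controls quasinormalizers, not the mere existence of intertwiners into the $N$-part. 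In fact the paper deliberately keeps both alternatives of the hypothesis alive at this stage: case 4a) of Theorem~\ref{commutationcontrolincommultiplication} is excluded only \emph{after} Theorems~\ref{toproductgroupcorners} and~\ref{parabolicQ}, using the conjugacy $\mu\El(\Phi)\mu^*=\El(Q)$ that the present theorem is meant to produce. Your plan is therefore circular unless you supply a genuinely different elimination argument.

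The second gap is the passage from an intertwining relation to an honest subgroup of $\La$. You correctly flag ``pass from the mere existence of an intertwiner to a single unitary implementing the conjugacy'' as the main obstacle, but you do not supply the argument, and the relation $\Delta(\El(Q))\prec_{\emm\bten\emm}\emm\bten\El(Q)$ by itself does not feed into the height machinery of \cite{IPV10}. The paper's route is different: it applies the transfer theorem \cite[Theorem~4.1]{DHI16} directly to the hypothesis (either alternative suffices, since both give an intertwining into $\emm\bten\El(\text{subgroup})$) to produce a subgroup $\Sg<\La$ with $C_\La(\Sg)$ nonamenable and $\El(Q_{\bar 1})\prec_\emm\El(\Sg)$, and then runs the virtual-centralizer machinery of \cite[Claim~5.2]{CU18}: one sets $\Omega=vC_\La(\Sg)$, exhausts it by the subgroups $\Omega_k$ generated by finitely many finite $\Sg$-orbits, locates a corner of $\El(Q_1)$ inside $\El(\Omega_k)$ with finite-index relative commutant, and recovers a corner of $\El(Q)$ from the quasinormalizer $\Xi=\mathcal{QN}_\La(\Omega_k)$. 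This is precisely how the subgroups $\Phi_1,\Phi_2\leqslant\Phi$ and the identity $\mu\El(\Phi)z\mu^*=h\El(Q)h$ are manufactured; your ``standard group-subalgebra rigidity argument'' compresses this entire construction into a phrase. The final paragraph of your proposal (self-quasinormalization via Lemma~\ref{malnormalcontrol}, transporting the product decomposition of $Q$) is in the right spirit, but it presupposes the subgroup and the corner conjugacy that you have not actually obtained.
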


\begin{proof} For the proof of this is exactly same proof as \cite[Theorem~5.6]{CDK19}, which is built upon the strategy used in the proof of \cite[Claim 5.2]{CU18}. We encourage the reader to consult this result beforehand as we will focus mainly on the new aspects of the technique. By hypothesis, using \cite[Theorem 4.1]{DHI16} (see also \cite[Theorem 3.1]{Io11} and \cite[Theorem 3.3]{CdSS15}), one can find a subgroup $\Sigma <\La$ with $C_\La(\Sg)$ non-amenable such that $\El(\bar Q_{ 1})\prec_\emm \El(\Sg)$. Also recall that $Q<\G$ is malnormal and has property (T). Note that the group $\G\times\G$ is bi-exact relative to $\mathcal K:=\{ \G\times\bar \G_{ i},\bar \G_{j}\times \G |i,j\in\{1,m \}\} $.
	Let $\Omega=vC_\La(\Sg)$. Let  $\{\mathcal O_1, ..., \mathcal O_k, ... \}$ be a countable enumeration of the finite orbits under conjugation by $\Sg$, and note that $\cup_k \mathcal O_k = \Omega$. Finally, let $\Omega_k=\langle \mathcal O_1, ..., \mathcal O_k \rangle \leqslant \La$, and note that $\Omega_k \nearrow \Omega.$
Then using the same argument from \cite[Claim 5.2]{CU18} one can find nonzero projections $a\in \El(Q_1)$, $q\in \El(\Omega_k)$ a nonzero partial isometry $w\in \El(Q)$ a subalgebra $\mathcal D\subseteq \eta q \El(\Omega_k) qz\mu^*$ and a $\ast$-isomorphism $\phi: a\El(Q_1)a\rar \mathcal D$ such that 
\begin{enumerate}
\item [4)]\label{finiteindex1}$\mathcal D\vee \mathcal D'\cap \eta q \El(\Omega_k) qz \eta^*\subseteq \eta q \El(\Omega_k) qz \eta^*$ is finite index and 
\item [5)]\label{intrel1}$\phi(x)w=wx \text{ for all } x\in a\El(Q_1)a.$ 
\end{enumerate}
Let $r= \eta qz \eta^*$, $ww^*\in \mathcal D'\cap r\El(Q)r$, $w^*w\in (a\El(Q_1)a)'\cap  a\El(Q)a=\El(\bar Q_{ 1})\otimes \mathbb C a$. Thus exist $b\in \El(\bar Q_{ 1})$ projection st $w^*w= a\otimes b$. Pick $c\in U(\El(Q))$such that $w=c(a\otimes b)$ then \eqref{intrel1} gives that 
\begin{equation}\label{equality1}\mathcal Dww^*=w\El(Q_1)w^*=c( aL(Q_1)a\otimes \mathbb C b)c^*.
\end{equation}
Moreover, the same argument from the proof of \cite[Claim 5.2]{CU18}
shows that we can assume the following is a finite inclusion of II$_1$ factors
\begin{equation}\label{finiteindex2}
\mathcal D\subseteq \eta q \El(\Omega_k)qz\eta^*
\end{equation}  
Thus if we denote by $\Xi= \mathcal{QN}_{\Lambda}(\Omega_k)$ using \eqref{equality1} and \eqref{finiteindex2}
 above we see that \begin{equation}\label{equality3}
c(a\otimes b) L(Q) (a\otimes b) c^*= ww^* \eta q z \mathcal {QN}_{\El(\La)}(\El(\Omega_k))''qz\eta^*ww^*= ww^* \eta q z L(\Xi)qz\eta^*ww^*
\end{equation}
and also 

\begin{equation}\label{equality2}
\begin{split}
c(\mathbb Ca \otimes b\El(Q_{\bar 1})b )c^*&= (c(a\El(Q_1)a\otimes \mathbb Cb)c^*)'\cap c(a\otimes b) \El(Q) (a\otimes b) c^*\\
&= (\mathcal Dww^*)'\cap ww^* \eta qz \El(\Xi) qz\eta^* ww^*= ww^*(\mathcal D'\cap \eta qz \El(\Xi) qz\eta^*) ww^*.
\end{split}
\end{equation}
Now we proceed exactly as the proof of \cite[Theorem~5.6]{CDK19} and obtain that there exist subgroups $\Phi_1,\Phi_2\leq \Phi\leq \La$ satisfying following properties: 
 \begin{enumerate}
	\item $\Phi_1,\Phi_2$ are infinite, commuting, property (T), finite-by-icc groups;
	\item $[\Phi:\Phi_1\Phi_2]<\infty$ and $\mathcal{QN}^{(1)}_\La(\Phi)=\Phi$;
	\item there exist $\mu \in \mathcal U(\emm)$, $z\in \mathcal P(\mathcal Z(\El(\Phi)))$, $h= \mu z\mu^*\in \mathcal P (\El(Q))$ such that 
	\begin{equation} \mu \El(\Phi) z\mu^* = h \El(Q)h.
	\end{equation}
\end{enumerate}

\end{proof}


\begin{lemma}\label{somefiniteintersection}Let $\G$ be a group as in Notation \ref{semidirectt} and assume that $\Lambda$ is a group such that $\El(\Gamma)=\El(\Lambda)=M$.  Also assume there exists a subgroup  $\Phi< \La $, a unitary $\mu \in \mathcal U(\emm)$ and projections  $z\in \mathcal Z(\El(\Phi))$, $r= \mu z\mu^*\in \El(Q)$ such that 
	\begin{equation}\label{equalcorner2} \mu \El(\Phi) z\mu^* = r \El(Q)r.
	\end{equation}
	For every $\lambda \in \La\setminus \Phi$ so that $\left |\Phi \cap \Phi^\lambda \right|=\infty$ we have $zu_\lam z=0$. In particular, there is $\lam_o\in \La\setminus \Phi$ so that $\left |\Phi \cap \Phi^{\lambda_o} \right|<\infty$.
	
\end{lemma}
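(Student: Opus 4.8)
The plan is to prove the vanishing $zu_\lambda z=0$ directly, and then extract $\lambda_o$ by contradiction. Throughout write $\emm=\El(\La)=\El(\G)$ and use that, by Notation~\ref{semidirectt}, $Q<\G$ is malnormal with $[\G:Q]=\infty$; in particular $\El(Q)\subseteq\emm$ is a mixing inclusion (a weakly null sequence in $(\El(Q))_1$ is asymptotically disentangled from vectors orthogonal to $\El(Q)$).

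\emph{The vanishing $zu_\lambda z=0$.} Fix $\lambda\in\La\setminus\Phi$ with $|\Phi\cap\Phi^\lambda|=\infty$ and set $K:=\Phi\cap\lambda^{-1}\Phi\lambda$, which is infinite since it is conjugate (by $\lambda$) to $\Phi\cap\Phi^\lambda$. For $k\in K$ we have $\lambda k\lambda^{-1}\in\Phi$, so both $K$ and $\lambda K\lambda^{-1}$ are infinite subgroups of $\Phi$. Put $a:=zu_\lambda z$. Since $z\in\mathcal Z(\El(\Phi))$ and $k,\lambda k\lambda^{-1}\in\Phi$, a direct computation gives the intertwining relation $a\,(u_k z)=(u_{\lambda k\lambda^{-1}}z)\,a$ for all $k\in K$. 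I would then transport this into $\El(Q)$: applying $\mathrm{Ad}(\mu)$ and using \eqref{equalcorner2} together with $u_k z,\,u_{\lambda k\lambda^{-1}}z\in\El(\Phi)z$, one gets $\tilde a:=\mu a\mu^*\in r\emm r$ and corner unitaries $\tilde b_k:=\mu u_k z\mu^*,\ c_k:=\mu u_{\lambda k\lambda^{-1}}z\mu^*\in r\El(Q)r$ with $\tilde a\,\tilde b_k=c_k\,\tilde a$. Decomposing $\tilde a=\tilde a_0+\tilde a_1$ with $\tilde a_0:=E_{\El(Q)}(\tilde a)\in r\El(Q)r$ and applying $E_{\El(Q)}$ to the relation shows $\tilde a_1$ obeys the same relation, with $\tilde a_1\perp\El(Q)$.

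The crux, and the step I expect to be the main obstacle, is to show $\tilde a_1=0$ via mixing. Choosing infinitely many distinct $k_n\in K$, we have $u_{k_n}\to0$ weakly, hence $\tilde b_{k_n},c_{k_n}\to0$ weakly in $(\El(Q))_1$. Rewriting $\tilde a_1=c_{k_n}\tilde a_1\tilde b_{k_n}^*$ and estimating
\[
\|\tilde a_1\|_2^2=\langle \tilde a_1^* c_{k_n}\tilde a_1,\ \tilde b_{k_n}\rangle=\langle E_{\El(Q)}(\tilde a_1^* c_{k_n}\tilde a_1),\ \tilde b_{k_n}\rangle\leqslant\|E_{\El(Q)}(\tilde a_1^* c_{k_n}\tilde a_1)\|_2,
\]
where the right-hand side tends to $0$ by mixing (applied to $\tilde a_1^*,\tilde a_1\perp\El(Q)$ and $c_{k_n}\to0$ weakly). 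Thus $\tilde a_1=0$, so $\tilde a=\mu(zu_\lambda z)\mu^*\in r\El(Q)r$ and, by \eqref{equalcorner2}, $zu_\lambda z\in\El(\Phi)z\subseteq\El(\Phi)$. Since $E_{\El(\Phi)}$ is $\El(\Phi)$-bimodular and $E_{\El(\Phi)}(u_\lambda)=0$ (as $\lambda\notin\Phi$), we conclude $zu_\lambda z=z\,E_{\El(\Phi)}(u_\lambda)\,z=0$.

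\emph{Existence of $\lambda_o$.} Suppose no such $\lambda_o$ exists, i.e.\ $|\Phi\cap\Phi^\lambda|=\infty$ for every $\lambda\in\La\setminus\Phi$. By the first part $zu_\lambda z=0$ for all $\lambda\notin\Phi$, so for every $x\in\emm$ one has $zxz=E_{\El(\Phi)}(x)z$, that is $z\emm z=\El(\Phi)z$. Conjugating by $\mu$ and invoking \eqref{equalcorner2} gives $r\emm r=r\El(Q)r$. Consequently, for each $g\in\G\setminus Q$ the element $ru_g r$ lies in $\El(Q)$ yet has Fourier support inside the double coset $QgQ$, which is disjoint from $Q$; hence $ru_g r=0$. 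On the other hand malnormality of $Q$ makes $(q,q')\mapsto qgq'$ injective on $Q\times Q$ for $g\notin Q$, whence $\|ru_g r\|_2=\tau(r)$. Therefore $\tau(r)=0$, contradicting $r\neq0$, and this yields the desired $\lambda_o\in\La\setminus\Phi$ with $|\Phi\cap\Phi^{\lambda_o}|<\infty$.
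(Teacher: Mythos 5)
Your argument is correct and follows the same route as the paper, which simply cites \cite[Lemma~5.7]{CDK19}: everything rests on the (almost) malnormality of $Q$ in $\G=N\rtimes Q$, transported through the identification $\mu \El(\Phi)z\mu^*=r\El(Q)r$ to force $zu_\lambda z$ into $\El(\Phi)z$ and hence to vanish, and then on $z\emm z=\El(\Phi)z$ forcing $r\emm r=r\El(Q)r$, which is impossible since $[\G:Q]=\infty$. The only cosmetic difference is that you establish the confinement step by a hands-on mixing estimate for the inclusion $\El(Q)\subseteq\emm$ (valid here since $Q\cap gQg^{-1}=\mathrm{Stab}_Q(n)$ is trivial for $g=nq\notin Q$), where the cited proof invokes the quasinormalizer control of Lemma~\ref{malnormalcontrol}; both are instances of the same malnormality phenomenon.
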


\begin{proof} Since $Q<\G=(N_1\times N_2\times\cdots\times N_m)\rtimes Q$ is almost malnormal, the proof of this lemma is exactly same as \cite[Lemma~5.7]{CDK19}.  \end{proof}

\begin{theorem}\label{parabolicQ} Assume the same conditions as in Theorem \ref{toproductgroupcorners}. Then one can find subgroups $\Phi_1,\Phi_2 \leqslant \Phi\leqslant \La $ so that \begin{enumerate}
		\item $\Phi_1,\Phi_2$ are infinite, icc, property (T) groups so that $\Phi=\Phi_1\times \Phi_2$;
		\item $\mathcal{QN}^{(1)}_\La(\Phi)=\Phi$;
		\item There exists $\mu \in \mathcal U(\emm)$ such that  $\mu \El(\Phi) \mu^* =  \El(Q)$.
	\end{enumerate}

\end{theorem}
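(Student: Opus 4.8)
The plan is to promote the corner statement of Theorem~\ref{toproductgroupcorners} to a genuine unitary conjugacy, the entire difficulty being to eliminate the central projection $z$ (equivalently $h=\mu z\mu^\ast$). I would follow the blueprint of the corresponding clean-up theorem in \cite{CDK19}, with Lemma~\ref{somefiniteintersection} as the decisive new input. The key observation that reduces everything to a group-theoretic statement is the following: if I can show that $\Phi=\Phi_1\times\Phi_2$ with $\Phi_1,\Phi_2$ infinite icc, then $\El(\Phi)=\El(\Phi_1)\bar\otimes\El(\Phi_2)$ is a II$_1$ factor, so $\mathscr Z(\El(\Phi))=\mathbb C1$, forcing $z=1$. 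As $\mu$ is a unitary this gives $h=\mu z\mu^\ast=1$, and therefore $\mu\El(\Phi)\mu^\ast=h\El(Q)h=\El(Q)$, which is exactly conclusion (3); conclusion (2) is already part of Theorem~\ref{toproductgroupcorners}.

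The structural heart is the triviality of the virtual centralizer. On the $\G$-side this is immediate: since $Q<\G=(N_1\times\cdots\times N_m)\rtimes Q$ is almost malnormal and icc, no nontrivial element of $\G$ has finite $Q$-conjugacy class. Indeed, a nontrivial $Q$-orbit of finite length in $N_1\times\cdots\times N_m$ would produce a finite-index, hence infinite, stabilizer inside $Q$, and thus an infinite intersection $Q\cap Q^g$ with $g\notin Q$, contradicting almost malnormality; while the $Q$-component of such an element must lie in the trivial $\mathrm{FC}$-radical of $Q$. Hence $vC_\G(Q)=\{1\}$. I would then transport this fact to $\Phi<\La$ through the corner relation. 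This is where Lemma~\ref{somefiniteintersection} enters: the vanishing $zu_\lambda z=0$ for every $\lambda\in\La\setminus\Phi$ with $|\Phi\cap\Phi^\lambda|=\infty$, together with the malnormality control of Lemma~\ref{malnormalcontrol} and the condition $\mathcal{QN}^{(1)}_\La(\Phi)=\Phi$, forces $vC_\La(\Phi)=\{1\}$.

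Granting $vC_\La(\Phi)=\{1\}$, the group-theoretic conclusions follow cleanly. First, any $w\in\Phi_1\cap\Phi_2$ commutes with both $\Phi_1$ and $\Phi_2$, hence lies in $Z(\Phi_1\Phi_2)\subseteq vC_\La(\Phi)=\{1\}$, so $\Phi_1\Phi_2=\Phi_1\times\Phi_2$. Next, the finite radical of each $\Phi_i$ has finite $\Phi$-conjugacy class, using that $\Phi_{3-i}$ centralizes $\Phi_i$ and $[\Phi:\Phi_1\Phi_2]<\infty$, hence is contained in $vC_\La(\Phi)=\{1\}$, so each $\Phi_i$ is icc. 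Finally $\mathcal{QN}^{(1)}_\La(\Phi)=\Phi$ upgrades the finite-index inclusion $\Phi_1\times\Phi_2\leqslant\Phi$ to an equality, giving $\Phi=\Phi_1\times\Phi_2$ with $\Phi_1,\Phi_2$ infinite icc property (T) groups, and the reduction of the first paragraph then yields $\mu\El(\Phi)\mu^\ast=\El(Q)$.

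I expect the transport step, namely deducing $vC_\La(\Phi)=\{1\}$ from the purely corner-level information $\mu\El(\Phi)z\mu^\ast=h\El(Q)h$, to be the main obstacle: one only controls $\El(\Phi)$ after compressing by $z$, so the finite-conjugacy elements of $\Phi$ must be excluded using the precise vanishing of Lemma~\ref{somefiniteintersection} rather than any global argument, and care is needed because factoriality of the corner $\El(\Phi)z$ by itself does not force $\Phi$ to be icc.
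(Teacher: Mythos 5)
Your reduction in the first paragraph is fine as far as it goes (if $\Phi$ were known to be icc then $\El(\Phi)$ would be a factor, $z=1$, and $\mu\El(\Phi)\mu^*=\El(Q)$ would follow), but the step you yourself flag as ``the main obstacle'' is a genuine gap, and it is exactly where your route breaks down. Since $\Phi\,vC_\La(\Phi)\leqslant \mathcal{QN}_\La(\Phi)\subseteq\mathcal{QN}^{(1)}_\La(\Phi)=\Phi$, the virtual centralizer $vC_\La(\Phi)$ is automatically contained in $\Phi$ and coincides with the FC-radical of $\Phi$; the elements you need to kill therefore live \emph{inside} $\Phi$. Lemma~\ref{somefiniteintersection} only constrains $\lambda\in\La\setminus\Phi$ with $|\Phi\cap\Phi^\lambda|=\infty$, so it says nothing about these elements, and Lemma~\ref{malnormalcontrol} does not help either. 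As you correctly observe, factoriality of the compression $\El(\Phi)z\cong h\El(Q)h$ does not force $\Phi$ to be icc (think of $(\mathbb Z/2)\times F_2$ with $z$ a minimal central projection), so there is no way to ``transport'' $vC_\G(Q)=\{1\}$ to $\Phi$ from the corner-level data alone. Your plan thus inverts the logical order in a way that cannot be repaired with the tools you invoke: one must first prove $z=1$ analytically, and only then read off the group-theoretic structure of $\Phi$.

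The paper does precisely this. It chooses $z\in\mathcal Z(\El(\Phi))$ maximal subject to $\El(\Phi_i)t\nprec_\emm\El(Q)$ for every projection $t\in\mathcal Z(\El(\Phi)z^{\perp})$ and $i=1,2$, and then shows $z^\perp=0$ by applying Theorem~\ref{controlprodpropt1} to the commuting property (T) algebras $\El(\Phi_1)t,\El(\Phi_2)t$ and iterating it $m-1$ times through the chain $\El(\widehat N_1)\supseteq\El(\widehat N_{1,2})\supseteq\cdots\supseteq\El(N_m)$, ending with $\El(\Phi_j)t\prec_\emm\El(Q)$, which contradicts the maximality of $z$. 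This iterated descent is the actual new content for general $m$ and is entirely absent from your proposal. Two smaller points: the derivation of $\Phi=\Phi_1\times\Phi_2$ and the icc-ness of the $\Phi_i$ from $vC_\La(\Phi)=\{1\}$ in your third paragraph is essentially right, but your claim that $\mathcal{QN}^{(1)}_\La(\Phi)=\Phi$ ``upgrades'' the finite-index inclusion $\Phi_1\Phi_2\leqslant\Phi$ to an equality is unjustified --- the quasinormalizer condition constrains $\Phi$ inside $\La$, not its finite-index subgroups --- and in the paper this upgrade is part of the deferred argument from \cite[Theorem~5.8]{CDK19}, carried out after $z=1$ is known.
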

\begin{proof} From Theorem \ref{toproductgroupcorners} there exist subgroups $\Phi_1,\Phi_2 \leqslant \Phi\leqslant \La $ such that \begin{enumerate}
		\item $\Phi_1,\Phi_2$ are, infinite, commuting, finite-by-icc, property (T) groups so that $[\Phi:\Phi_1\Phi_2]<\infty$;
		\item $\mathcal{QN}^{(1)}_\La(\Phi)=\Phi$;
		\item There exist $\mu \in \mathcal U(\emm)$ and  $z\in \mathcal P(\mathcal Z(\El(\Phi)))$ with  $h= \mu z\mu^*\in \mathcal P (\El(Q))$ satisfying 
		\begin{equation}\label{maxcorner1} \mu \El(\Phi) z\mu^* = h \El(Q)h.
		\end{equation}
	\end{enumerate}  
	
	Next observe that in \eqref{maxcorner1} we can pick $z\in \mathcal Z(\El(\Phi))$ maximal with the property that for every projection $t \in \mathcal Z(\El(\Phi)z^{\perp})$ we have \begin{equation}\label{nonitertwiningcomplement}L(\Phi_i)t \nprec_\emm \El(Q)\text{ for }i=1,2.\end{equation} 
	
	This follows from the proof of \cite[Theorem~5.8]{CDK19}.

\vskip 0.05in
Next fix $t\in \mathcal Z(\El(\Phi)z^{\perp})$. Since $\El(\Phi_1)t$ and $\El(\Phi_2)t$ are commuting property (T) von Neumann algebras then using the same arguments as in the first part of the proof of Theorem \ref{commutationcontrolincommultiplication} there are two possibilities: either i) there exists $j\in 1,2$ such that  $\El(\Phi_j)t\prec_\emm \El(\bar N_1)$ or ii) $\El (\Phi)t \prec_\emm \El(\bar N_1\rtimes diag(Q))$.  Note that $i)$ implies our desire result by arguing exactly same the proof of \cite[Theorem~5.8]{CDK19}. All we need to show is that case $ii)$ is impossible.  Assuming ii), Theorem \ref{controlprodpropt1} would further imply that either i) $\El(\Phi_j)t\prec_\emm \El(\bar N_{1,2})$ or ii) $\El (\Phi)t \prec_\emm \El(\bar N_{1,2}\rtimes diag(Q))$. Note that again $i)$ implies our desire result by arguing exactly same the proof of \cite[Theorem~5.8]{CDK19} and we are left with the case ii). Continuing this way for $m-1$ times if necessary, we finally end up getting either  i) $\El(\Phi_j)t\prec_\emm \El(\bar N_{1,2,\cdots,m-1})=\El(N_m)$ or ii) $\El (\Phi)t \prec_\emm \El(\widehat N_{\{1,2,\cdots,m-1\}}\rtimes diag(Q))=\El(N_m\rtimes Q)$. Arguing same as above we are done when we have i). Now assuming ii), Theorem \ref{controlprodpropt1} for $n=1$ would imply the existence of $j \in 1,2$ so that $\El(\Phi_j)t \prec_\emm \El(Q)$ which obviously contradicts the choice of $z$.

 \end{proof}

\begin{theorem} In the Theorem \ref{commutationcontrolincommultiplication} we cannot have case 4a).\end{theorem}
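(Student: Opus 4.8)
The plan is to suppose case 4a) holds and to derive a contradiction, by first transferring the comultiplication intertwinings down to $\emm$ and then ruling the resulting intertwinings out by an elementary computation inside $\G=N\rtimes Q$. Since 4a) asserts its disjunction for every admissible index $j$ and every proper subset $S$, it suffices to fix one convenient pair. First I would choose $S$ so that $Q_S$ is a nontrivial subproduct of $Q$ (say $Q_S=Q_1$, whence $\widehat Q_S=Q_2\times\cdots\times Q_n$), so that both $Q_S$ and $\widehat Q_S$ are infinite, and I would fix any $j$. Then 4a) gives that either $\De(\El(Q_S))\prec_{\emm\bten\emm}\emm\bten\El(\widehat N_j)$ or $\De(\El(\widehat Q_S))\prec_{\emm\bten\emm}\emm\bten\El(\widehat N_j)$.

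Next I would invoke the comultiplication transfer principle: for the comultiplication $\De$ along $\La$ and arbitrary von Neumann subalgebras $\mathcal P,\mathcal Q\subseteq\emm$, an intertwining $\De(\mathcal P)\prec_{\emm\bten\emm}\emm\bten\mathcal Q$ forces $\mathcal P\prec_{\emm}\mathcal Q$. This is precisely the mechanism behind Lemma \ref{io11gen}, which handles the extremal target $\mathcal Q=\mathbb C$, and in the generality needed here it follows from the comultiplication intertwining results of \cite{Io11,DHI16}. Applying it with $\mathcal Q=\El(\widehat N_j)$ would yield that either $\El(Q_S)\prec_{\emm}\El(\widehat N_j)$ or $\El(\widehat Q_S)\prec_{\emm}\El(\widehat N_j)$. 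I would emphasize that both the source ($\El(Q_S)$ or $\El(\widehat Q_S)$) and the target $\El(\widehat N_j)$ are group subalgebras of $\El(\G)=\emm$ arising on the $\G$-side, so the resulting intertwining is an assertion entirely internal to $\El(\G)$.

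The final step is to contradict this by a group computation. Writing $H$ for whichever of $Q_S$ or $\widehat Q_S$ occurs, $H\leqslant Q$ is infinite. Because $N\trianglelefteq\G$, for every $g\in\G$ one has $g\widehat N_j g^{-1}\leqslant N$, so that $H\cap g\widehat N_j g^{-1}\leqslant Q\cap N=\{e\}$, and hence $[H:H\cap g\widehat N_j g^{-1}]=\infty$ for all $g$. By the group-theoretic form of Popa's intertwining criterion this gives $\El(H)\nprec_{\emm}\El(\widehat N_j)$, the desired contradiction; therefore 4a) cannot occur.

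The step I expect to be the main obstacle is the transfer principle. Unlike the situation of Lemma \ref{io11gen}, where the target leg is trivial and the Fourier computation collapses cleanly, here the second leg $\El(\widehat N_j)$ is a nontrivial $\G$-side subalgebra that is not a priori adapted to the $\La$-grading defining $\De$. The care will be to cite (or reprove, following the computation in Lemma \ref{io11gen}) the comultiplication intertwining machinery of \cite{DHI16}, in combination with the ambient product structure $\G<\G_1\times\cdots\times\G_m$ recorded in Notation \ref{semidirectt}, in exactly the form that transfers $\De(\mathcal P)\prec\emm\bten\mathcal Q$ to $\mathcal P\prec_\emm\mathcal Q$ for general $\mathcal Q$. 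Once this is secured, the concluding group computation is routine.
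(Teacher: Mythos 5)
Your final group-theoretic computation is fine, and the overall shape (reduce to an intertwining internal to $\El(\G)$, then kill it by a subgroup-index argument) is a reasonable instinct, but the step you yourself flag as the main obstacle is a genuine gap, not a citation to be filled in later. There is no ``transfer principle'' of the form $\De(\mathcal P)\prec_{\emm\bten\emm}\emm\bten\mathcal Q\Rightarrow\mathcal P\prec_\emm\mathcal Q$ for an \emph{arbitrary} subalgebra $\mathcal Q\subseteq\emm$. The comultiplication intertwining results of \cite{Io11,IPV10,DHI16} all require the target on the second leg to be (unitarily conjugate to) $\El(\Sg)$ for a subgroup $\Sg\leqslant\La$ of the group along which $\De$ is built, because the arguments track Fourier support over $\La$; Lemma \ref{io11gen} is the extreme case $\Sg=\{e\}$ and does not generalize by the same computation. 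Here $\El(\widehat N_j)$ lives on the $\G$-side and is not known at this stage of the proof to be adapted to the $\La$-grading --- indeed, identifying which $\G$-side subalgebras correspond to subgroups of $\La$ is precisely what the surrounding theorems are laboring to establish. So the reduction to $\El(Q_S)\prec_\emm\El(\widehat N_j)$ is unjustified, and with it the whole argument.

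What actually happens in the paper is different in both mechanism and endgame. Starting from $\De(\El(Q_F))\prec_{\emm\bten\emm}\emm\bten\El(\widehat N_j)$, one applies \cite[Theorem 4.1]{DHI16} (using property (T) of $\widehat N_j$), whose conclusion is \emph{not} $\El(Q_F)\prec\El(\widehat N_j)$ but rather the existence of a subgroup $\Sg<\La$ with $\El(Q_F)\prec_\emm\El(\Sg)$ \emph{and} $\El(\widehat N_j)\prec_\emm\El(C_\La(\Sg))$. One then feeds in the structure already secured by Theorems \ref{toproductgroupcorners} and \ref{parabolicQ} --- namely $\mu\El(\Phi)\mu^*=\El(Q)$ with $\Phi=\Phi_1\times\Phi_2$ almost malnormal in $\La$ and $\El(Q_F)$ identified with an amplification of $\El(\Phi_1)$ --- to conclude that $\Sg\cap\Phi_i$ is infinite of finite index in $\Phi_i$, whence $C_\La(\Sg)\subseteq C_\La(\Sg\cap\Phi_i)<\Phi$ by almost malnormality, and therefore $\El(\widehat N_j)\prec_\emm\El(\Phi)=\mu^*\El(Q)\mu$. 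The contradiction is then that $\El(\widehat N_j)\prec_\emm\El(Q)$ is impossible (since $g Q g^{-1}\cap N=\{e\}$ for all $g\in\G$) --- the mirror image of the computation you run, but applied to the opposite pair of subgroups, and only reachable after the detour through the mystery group $\La$. Your proposal uses none of the $\Phi$-structure, which is the real content of this step.
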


\begin{proof} Since $\mu \El(\Phi)\mu^*=\El(Q)$ and $Q_i$ are biexact then by the product rigidity in \cite[Theorem~4.14]{CdSS15} we can assume that there is a unitary $v\in \El(Q)$ and a subset $F\subseteq \{1,2,\cdots,m\}$  such that $v\El(Q_F)v^*=\El(\Phi_1)^t$ and $v\El(Q_{F^C})v^*=\El(\Phi_2)^{1/t}$. Assume by contradiction that for all $j\in\{1,\cdots,m\}$ and any subset $S\subseteq \{1,\dots,m\}$ we have $\Delta (\El(Q_S))$  $\prec_{\emm\bar\otimes \emm} \emm\bar\otimes \El(\widehat N_j)$ or $\Delta(\El( Q_{S^C}))$ $\prec_{\emm\bar\otimes \emm} \emm\bar\otimes \El(\widehat N_j)$. Without loss of generality we assume that $\Delta (\El(Q_F))$  $\prec_{\emm\bar\otimes \emm} \emm\bar\otimes \El(\widehat N_j)$. Using \cite[Theorem 4.1]{DHI16} and the property (T) on $\widehat N_j$ one can find a subgroup $\Sg<\La$ such that $\El(Q_F)\prec_\emm \El(\Sg)$ and $\El(\widehat N_j)\prec_\emm \El(C_\La(\Sg))$.  Thus we get that $\El(\Phi_i)\prec
_\emm \El(\Sg)$  and hence $[\Phi_i: g\Sg g^{-1} \cap \Phi_i]<\infty$. So working with $g\Sg g^{-1}$ instead of $\Sg$ we can assume that  $[\Phi_i: \Sg  \cap \Phi_i]<\infty$.In particular $\Sg\cap \Phi_i$ is infinite and since $\Phi$ is almost malnormal in $\La$ it follows that $C_\La(\Sg\cap \Phi_i) < \Phi$. Thus we have that  $\El(\widehat N_j)\prec_\emm \El(C_\La(\Sg))\subseteq \El(C_\La(\Sg\cap \Phi_i))\subset \El(\Phi)=\mu^*\El(Q)\mu$ which is a contradiction. \end{proof}

\begin{theorem}\label{commutationcontrolincommultiplication2} Let $\G$ be a group as in Notation \ref{semidirectt} and assume that $\Lambda$ is a group such that $\El(\Gamma)=\El(\Lambda)=\emm$. Let $\Delta: \emm \rar \emm \bar \otimes \emm$ be the comultiplication ``along $\Lambda$'' i.e. $\Delta (v_\lambda)=v_\lambda \otimes v_\lambda$. Then for any proper subset $S\subseteq \{1,\cdots,m\}$ the following hold:
	\begin{enumerate}
		\item [i)]  $\Delta(\El(N_S\times \widehat N_S)\prec^s_{\emm \bar\otimes \emm} \El(N)\bar\otimes \El(N)$, and
		\item [ii)] there is a unitary $u\in \emm \bten \emm$ such that $u\Delta(\El(Q))u^*\subseteq  \El(Q)\bar\otimes \El(Q)$.    
	\end{enumerate}
	
\end{theorem}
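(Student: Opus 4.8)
The plan is to establish, for each piece, a one-legged intertwining, combine the two legs, and then upgrade to the strong intertwining of i) and the unitary conjugation of ii); I treat the two assertions separately.

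For i) the observation is that $N_1,\dots,N_m$ are icc property (T) groups, so $\Delta(\El(N_1)),\dots,\Delta(\El(N_m))$ are mutually commuting property (T) subfactors of $\emm\bten\emm$. Viewing them inside $\tilde\emm\bten\tilde\emm=\El(\G_1\times\cdots\times\G_m\times\G_1\times\cdots\times\G_m)$ through the fiber-product embedding $\G<\G_1\times\cdots\times\G_m$, and using that each $\G_i$ is hyperbolic relative to $Q$, I would run the location theorem (Theorem~\ref{controlprodpropt2}) on the whole family exactly as in the derivation of 3), 5), 6) in the proof of Theorem~\ref{commutationcontrolincommultiplication}, invoking the join-version part 2) of that theorem to keep $\Delta(\El(N))$ intact rather than splitting it, and then intersecting the resulting parabolic corners back into $\emm\bten\emm$ via \cite[Lemma~2.3]{CDK19}. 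The delicate point here is excluding an escape of the form $\Delta(\El(N))\prec\emm\bten\El(Q)$ into the acting-group direction; this is ruled out by the property (T) versus relative-hyperbolicity dichotomy, and yields the second-leg control $\Delta(\El(N))\prec_{\emm\bten\emm}\emm\bten\El(N)$. Applying the flip automorphism of $\emm\bten\emm$, which leaves $\Delta(\emm)$ pointwise fixed, gives the first-leg control $\Delta(\El(N))\prec_{\emm\bten\emm}\El(N)\bten\emm$. Since $N\trianglelefteq\G$, the subalgebra $\Delta(\El(N))$ is normalized by $\Delta(\emm)$, so the regularity upgrade \cite[Lemma~2.4]{CDK19} promotes both controls to $\prec^s$, and a standard combination of strong intertwinings into the two legs (using $\El(N)\bten\El(N)=(\El(N)\bten\emm)\cap(\emm\bten\El(N))$) yields $\Delta(\El(N))\prec^s_{\emm\bten\emm}\El(N)\bten\El(N)$, which is i).

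For ii) the preceding theorem shows case 4a) of Theorem~\ref{commutationcontrolincommultiplication} is impossible, so we are in case 4b): $\Delta(\El(Q))\prec_{\emm\bten\emm}\emm\bten\El(Q)$, together with the coordinatewise refinement that for each $j$ there is $i$ with $\Delta(\El(Q_j))\prec_{\emm\bten\emm}\emm\bten\El(Q_i)$. Flip symmetry again supplies the corresponding first-leg controls. Because $Q=Q_1\times\cdots\times Q_n$ is a product of prime, biexact factors, I would assemble the coordinatewise two-leg controls into $\Delta(\El(Q))\prec_{\emm\bten\emm}\El(Q)\bten\El(Q)$ using the product-rigidity bookkeeping of \cite[Theorem~4.14]{CdSS15}.

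The hard part will be promoting this corner intertwining to an honest unitary conjugation $u\Delta(\El(Q))u^*\subseteq\El(Q)\bten\El(Q)$. The target is maximal in a strong sense: by Theorem~\ref{parabolicQ} we have $\El(Q)=\mu\El(\Phi)\mu^*$ with $\mathcal{QN}^{(1)}_\La(\Phi)=\Phi$, so $\El(Q)$ equals its own one-sided quasinormalizer in $\emm$ and is irreducible. Exploiting that $\Delta$ is a comultiplication, whence $\Delta(\El(Q))$ is a factor with trivial relative commutant in $\emm\bten\emm$, I would run the comultiplication-conjugacy argument of \cite{IPV10} as adapted in \cite{CDK19}: once $\prec$ has been promoted to $\prec^s$ using the quasinormalizer rigidity, the partial isometry furnished by Theorem~\ref{corner} has full left and right supports, since any defect projection would lie in a quasinormalizer strictly larger than $\Phi$ or would contradict factoriality, and hence the intertwiner can be taken to be a genuine unitary $u$. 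Verifying this fullness of supports, rather than the location estimates, is the true crux of the argument.
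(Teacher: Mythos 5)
Your overall skeleton for this theorem (one‑legged controls on each tensor factor, combination of the two legs via $\El(N)\bten\El(N)=(\emm\bten\El(N))\cap(\El(N)\bten\emm)$, and a promotion to a genuine unitary in ii)) matches the paper, and the flip‑automorphism trick for producing the left leg is a legitimate shortcut, since the flip fixes $\Delta(\emm)$ pointwise. The genuine gap is in how you obtain the one‑legged control $\Delta(\El(N))\prec^s_{\emm\bten\emm}\emm\bten\El(N)$ in i). You propose to ``invoke the join‑version part 2) of Theorem~\ref{controlprodpropt2} to keep $\Delta(\El(N))$ intact,'' but that theorem is a dichotomy whose branch you cannot choose: when branch 1) holds you only control \emph{one} of the two commuting pieces $\Delta(\El(N_S))$, $\Delta(\El(\widehat N_S))$, and indeed the output 3) of Theorem~\ref{commutationcontrolincommultiplication} is exactly this split statement. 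The paper's proof therefore must do two things you omit: first, a combinatorial exclusion over the index $j$ --- if the \emph{same} piece, say $\Delta(\El(N_S))$, satisfied $\Delta(\El(N_S))\prec^s_{\emm\bten\emm}\emm\bten\El(\widehat N_j)$ for \emph{every} $j$, then \cite[Lemma 2.8(2)]{DHI16} would give $\Delta(\El(N_S))\prec^s\emm\bten\El(\cap_j\widehat N_j)=\emm\bten 1$, a contradiction; hence both pieces embed, into possibly different targets $\emm\bten\El(\widehat N_i)$ and $\emm\bten\El(\widehat N_j)$ --- and second, the Isono‑type combination (\cite[Lemma 2.6]{Is16}) of the two commuting strong intertwinings into the common target $\emm\bten\El(N)$. (The upgrade from $\prec$ to $\prec^s$ needed for both steps comes from $\mathcal N_{\emm\bten\emm}(\Delta(\El(N_S)))''\supseteq\Delta(\emm)$ together with $\Delta(\emm)'\cap(\emm\bten\emm)=\mathbb C1$, via \cite[Lemma 2.4(3)]{DHI16}.) Your stated way of ruling out the ``escape into the acting‑group direction'' --- a ``property (T) versus relative‑hyperbolicity dichotomy'' --- is not an argument: $\El(Q)$ has property (T) as well, so no such dichotomy distinguishes the two targets.

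For ii) your shape is right (4a) is impossible by the preceding theorem, so $\Delta(\El(Q))\prec_{\emm\bten\emm}\emm\bten\El(Q)$; flip; combine; promote), but the promotion to $u\Delta(\El(Q))u^*\subseteq\El(Q)\bten\El(Q)$ should rest on the almost malnormality of $Q$ in $\G$ on the \emph{target} side, as in Lemma~\ref{malnormalcontrol} and \cite[Lemma 2.2]{CDK19}: since $\Delta(\El(Q))\nprec\emm\bten 1$, the support projection $vv^*$ of the intertwiner already lies in the relative commutant inside $\emm\bten\El(Q)$, and factoriality of $\emm\bten\El(Q)$ then lets one replace the partial isometry by a unitary. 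Your appeal to $\mathcal{QN}^{(1)}_\La(\Phi)=\Phi$ from Theorem~\ref{parabolicQ} lives on the $\La$‑side and is not what controls the supports here; the paper itself simply defers this step to the proof of \cite[Theorem 3.10 ii)]{CDK19}.
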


\begin{proof} First we show i). From Theorem \ref{commutationcontrolincommultiplication} we have that for all $j\in \{1,\cdots,m\}$ and for any proper subset $F\subseteq \{ 1,\cdots,m\}$ we have either $\Delta(\El(N_{F}))\prec_{\emm \bar\otimes \emm} \emm\bar\otimes \El(\widehat N_j)$ or $\Delta(\El(\widehat N_{F}))\prec_{\emm \bar\otimes \emm} \emm\bar\otimes \El(\widehat N_j)$. Notice that since for every subset $F\subseteq \{1,\cdots,m\}$ we have $\mathcal N_{\emm \bten \emm}\De(\El(N_F))''\supset \De(\emm)$ and $\De(\emm)'\cap \emm \bten \emm =\mathbb C 1$ then by \cite[Lemma 2.4 part (3)]{DHI16} we actually have either  $\Delta(\El(N_{F}))\prec^s_{\emm\bar\otimes \emm} \emm\bar\otimes \El(\widehat N_j)$ or $\Delta(\El(\widehat N_{F}))\prec^s_{\emm\bar\otimes \emm} \emm\bar\otimes \El(\widehat N_j)$. If we have the following situation:
\begin{align*}
\Delta(\El(N_{F}))\prec^s_{\emm\bar\otimes \emm} \emm\bar\otimes \El(\widehat N_i)\\
and\ \Delta(\El(\widehat N_{F}))\prec^s_{\emm\bar\otimes \emm} \emm\bar\otimes \El(\widehat N_j)
\end{align*}
, for some i and j, then we are done. 	
Notice that if for all $j\in\overline{1,k}$ we have $\Delta(\El(N_{F}))\prec^s_{\emm\bar\otimes \emm} \emm\bar\otimes \El(\widehat N_j)$. Then we would have $\De (\El(N_{F})\prec^s \emm \bar\otimes \El(\widehat N_j)$ for all $j$ and hence by \cite[Lemma 2.8 (2)]{DHI16} we get that $\De (\El(N_{F})\prec^s \emm \bar\otimes \El(\cap_j\widehat N_j)=\emm \bar\otimes 1$ which is a contradiction. Furthermore using the same arguments as in \cite[Lemma 2.6]{Is16} we have that $\Delta(\El(N_F\times \widehat N_F)\prec^s_{\emm \bar\otimes \emm} \emm \bar\otimes \El(N)$. Then working on the left side of the tensor we get that $\Delta(\El(N_F\times \widehat N_F)\prec^s_{\emm \bar\otimes \emm} \El(N)\bar\otimes \El(N)$.

	\vskip 0.05in
	
Part ii) follows from the proof of part ii) of \cite[Theorem~3.10]{CDK19}.

\end{proof}

\subsubsection{Proof of Theorem \ref{semidirectprodreconstruction}} 

\begin{proof} The proof has two parts. Firstly we are going to reconstruct the acting group $Q$. This follows from the proof of the first part of \cite[Theorem~5.1]{CDK19}.
	From Theorem \ref{commutationcontrolincommultiplication2} have that $\Delta(\El(N_1 \times N_2\times\cdots \times N_m)) \prec^s_{\emm \tp \emm} \El(N) \tp \El(N)$. Proceeding exactly as in the proof of \cite[Claim 4.5]{CU18} we can show that $\Delta(\mathcal A) \subseteq \mathcal A \tp \mathcal A$, where $\mathcal A= u\El(N)u^{\ast}$. By \cite[Lemma~2.8]{CDK19}, there exists a subgroup $\Sg < \La$ such that $\mathcal A= \El(\Sg)$. The last part of the proof of \cite[Theorem 5.2]{CU18} shows that $\La= \Sg \rtimes \Phi$. In order to reconstruct the product feature of $\Sigma$, we need a couple more results.
	
	\begin{claim} For every $j\in \overline{1,m}$ there exists  $i\in \overline{1,m}$ such that \begin{equation}\label{intonetensor}
		\Delta(\El(\widehat N_j))\prec^s \El (N) \bten \El
		(\widehat N_i).
		\end{equation}
	\end{claim}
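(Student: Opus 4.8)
The plan is to prove \eqref{intonetensor} in two stages. First I would control the right tensor leg, producing an index $i\in\overline{1,m}$ with $\Delta(\El(\widehat N_j))\prec^s_{\emm\bten\emm}\emm\bten\El(\widehat N_i)$; then I would refine the left leg from $\emm$ down to $\El(N)$ using the flip symmetry of the diagonal comultiplication together with an intersection argument for strong intertwinings. Both stages reuse the machinery already deployed in Theorem~\ref{commutationcontrolincommultiplication2}.

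For the right leg I would apply part~3) of Theorem~\ref{commutationcontrolincommultiplication} with the singleton $S=\{j\}$, for which $\widehat N_S=\widehat N_j$ and $N_S=N_j$: for every $k\in\overline{1,m}$ we get either $\Delta(\El(\widehat N_j))\prec_{\emm\bten\emm}\emm\bten\El(\widehat N_k)$ or $\Delta(\El(N_j))\prec_{\emm\bten\emm}\emm\bten\El(\widehat N_k)$. Since the diagonal action preserves each factor, both $N_j$ and $\widehat N_j$ are normal in $\G$, so $\El(N_j)$ and $\El(\widehat N_j)$ are regular in $\emm$; as $\Delta(\emm)'\cap(\emm\bten\emm)=\mathbb C 1$, I would upgrade each alternative to $\prec^s$ via \cite[Lemma~2.4 part (3)]{DHI16}, exactly as in Theorem~\ref{commutationcontrolincommultiplication2}. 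If the first alternative failed for all $k$, then the second would hold for all $k$, and intersecting over $k$ through \cite[Lemma~2.8 (2)]{DHI16} would give $\Delta(\El(N_j))\prec^s_{\emm\bten\emm}\emm\bten\El(\cap_k\widehat N_k)=\emm\bten 1$; by the argument of Lemma~\ref{io11gen} (cf.\ \cite[Lemma~9.2]{Io11}) this forces $\El(N_j)\prec_\emm\mathbb C 1$, contradicting diffuseness of $\El(N_j)$. Hence some $i$ satisfies $\Delta(\El(\widehat N_j))\prec^s_{\emm\bten\emm}\emm\bten\El(\widehat N_i)$.

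To refine the left leg I would use the flip automorphism $\mathrm{Fl}\colon\emm\bten\emm\to\emm\bten\emm$, $\mathrm{Fl}(a\otimes b)=b\otimes a$, which fixes the diagonal comultiplication, i.e.\ $\mathrm{Fl}\circ\Delta=\Delta$. Applying $\mathrm{Fl}$ to the intertwining from the previous step yields $\Delta(\El(\widehat N_j))=\mathrm{Fl}(\Delta(\El(\widehat N_j)))\prec^s_{\emm\bten\emm}\mathrm{Fl}(\emm\bten\El(\widehat N_i))=\El(\widehat N_i)\bten\emm$. Now $\emm\bten\El(\widehat N_i)=\El(\G\times\widehat N_i)$ and $\El(\widehat N_i)\bten\emm=\El(\widehat N_i\times\G)$ are subgroup algebras of $\El(\G\times\G)$ with intersection $\El(\widehat N_i\times\widehat N_i)=\El(\widehat N_i)\bten\El(\widehat N_i)$, so intersecting the two strong intertwinings via \cite[Lemma~2.8 (2)]{DHI16} gives $\Delta(\El(\widehat N_j))\prec^s_{\emm\bten\emm}\El(\widehat N_i)\bten\El(\widehat N_i)$. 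Since $\El(\widehat N_i)\bten\El(\widehat N_i)\subseteq\El(N)\bten\El(\widehat N_i)$ and $\prec^s$ is monotone in its target, this establishes \eqref{intonetensor}.

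The dichotomy and the $\prec$-to-$\prec^s$ upgrade are routine, being copies of steps in Theorem~\ref{commutationcontrolincommultiplication2}. I expect the one point needing care to be the left-leg refinement: one must be sure that the flip genuinely fixes $\Delta$ (it does, precisely because $\Delta$ is the diagonal map $v_\lambda\mapsto v_\lambda\otimes v_\lambda$) and that \cite[Lemma~2.8 (2)]{DHI16} applies to the intersection of the two transverse targets $\El(\G\times\widehat N_i)$ and $\El(\widehat N_i\times\G)$. This is exactly the mechanism the authors already invoke to pass from $\emm\bten\El(N)$ to $\El(N)\bten\El(N)$ (``working on the left side of the tensor'') in the proof of Theorem~\ref{commutationcontrolincommultiplication2}, so no new ingredient is required.
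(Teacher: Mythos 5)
Your proposal is correct and follows essentially the same route as the paper: the dichotomy from part 3) of Theorem~\ref{commutationcontrolincommultiplication} with $S=\{j\}$, the upgrade to $\prec^s$ via regularity of $\Delta(\El(N_j))$, $\Delta(\El(\widehat N_j))$ and \cite[Lemma~2.4(3)]{DHI16}, the contradiction obtained by intersecting the bad alternative over all $i$ down to $\emm\bten 1$, and the final application of \cite[Lemma~2.8(2)]{DHI16}. The only (harmless) difference is in the last step: you feed the intersection lemma the flipped intertwining $\Delta(\El(\widehat N_j))\prec^s \El(\widehat N_i)\bten\emm$, obtained from ${\rm Fl}\circ\Delta=\Delta$, whereas the paper intersects with the already-established $\Delta(\El(\widehat N_j))\prec^s \El(N)\bten\El(N)$ from part i) of Theorem~\ref{commutationcontrolincommultiplication2}; both satisfy the commuting-square and regularity hypotheses of that lemma and yield the claim (yours even gives the slightly stronger target $\El(\widehat N_i)\bten\El(\widehat N_i)$).
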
 
	
	\noindent \textit{Proof of Claim.} Note that for every $i,j\in\overline{1,m}$ we have either $\Delta(\El(N_{j}))\prec^s_{\emm\bar\otimes \emm} \emm\bar\otimes \El(\widehat N_i)$ or $\Delta(\El(\widehat N_{j}))\prec^s_{\emm\bar\otimes \emm} \emm\bar\otimes \El(\widehat N_i)$. If for some $j$, we have  $\Delta(\El(N_{j}))\prec^s_{\emm\bar\otimes \emm} \emm\bar\otimes \El(\widehat N_i)$ for every $i\in\overline{1,m}$. Then by \cite[Lemma~2.8 (2)]{DHI16} we get that $\Delta(\El(N_{j}))\prec^s_{\emm\bar\otimes \emm} \emm\bar\otimes \El(\cap_i \widehat N_i)=\emm\bten 1$ which is a contradiction by \cite[Proposition~7.2.1]{IPV10}. Hence for every  $j\in \overline{1,m}$ there exists  $i\in \overline{1,m}$ such that $\Delta(\El(\widehat N_j))\prec^s \emm \bten \El
	(\widehat N_i)$. Since $\Delta(\El(\widehat N_j))\prec^s_{\emm\bten\emm}\El(N)\bten \El(N)$, we get the claim by \cite[Lemma~2.8 (2)]{DHI16}.

	$\hfill\blacksquare$
	\vskip 0.07 in
	Let $\mathcal A_j= u\El(\widehat N_j))u^{\ast}$. Thus, we get that $\Delta(\mathcal A) \prec^s \El(N) \otimes \El(\widehat N_i)$ for some $i\in\overline{1,m}$. This implies that for every $\varepsilon >0$, there exists a finite set $S \subset u^{\ast} Q u$, containing $e$, such that $\|d-P_{S \times S}(d)\|_2 \leq \varepsilon$ for all $d \in \Delta (\mathcal A_j)$. However, $\Delta (\mathcal A_j)$ is invariant under the action of $u^{\ast}Q u$, and hence arguing exactly as in \cite[Claim 4.5]{CU18} we get that $\Delta(\mathcal A_j) \subset (\El(\Sg) \bten u\El(\widehat N_i)u^{\ast})$. We now separate the argument into two different cases:
	
	\textbf{Case I:} $i=j$.
	
	In this case, $\Delta(\mathcal A_j) \subseteq \El(\Sg) \tp \mathcal A_j$. Thus by \cite[Lemma~2.8]{CDK19} we get that there exists a subgroup $\Sg_0 < \Sg$ with $\mathcal A_j = \El(\Sg_j)$. Now, $\mathcal A_j' \cap \El(\Sg)= u\El(N_i)u^{\ast}$. Thus, $\El(\Sg_j)' \cap \El(\Sg) = u\El(N_i)u^{\ast}$. Note that $\Sg$ and $\Sg_j$ are both icc property (T) groups. This implies that $\El(\Sg_j)' \cap \El(\Sg)= \El(vC_{\Sg}(\Sg_j))$, where $vC_{\Sg}(\Sg_0)$ denotes the \textit{virtual centralizer} of $\Sg_j$ in $\Sg$. Proceeding as in \cite{CdSS17} we can show that $\Sg= \Sg_j \times \Sg_j^1$.
	
	\textbf{Case II:} $i\neq j$.
	
	Let $\mathcal B= u\El(\widehat N_i)u^{\ast}$. In this case, $\Delta(\mathcal A) \subseteq \El(\Sg) \tp \mathcal B$. However, \cite[Lemma~2.8]{CDK19} then implies that $\mathcal A \subseteq \mathcal B$, which is absurd, as $\El(\widehat N_i)$ and $\El(\widehat N_j)$ have orthogonal subalgebras namely $\El(N_j)$ and $\El(N_i)$ respectively. Hence this case is impossible and we are done. \end{proof}



\section{Semidirect product rigidity for class $\mathscr{S}_{\infty,n}$}

In this section we study algebraic properties of groups arising from class $\mathscr S_{\infty,m}$. As explained in subsection~2.4, groups in this class appear as inductive limits of groups from class $\mathscr S_{m,n}$. We showed that semidirect product feature for this class of groups survives the passage to group von Neumann algebra regime. Before proving the theorem, we need following auxiliary theorems about location of acting group and core under comultiplication. 

\begin{theorem}\label{semidirectrigiditys-infty1}
	Let $\G\in\mathscr{S}_{\infty,n}$ such that $\El(\G)=\El(\La)=\emm$. Let $\Delta:\emm\rightarrow \emm\bten\emm$. Then the following hold: 
	\begin{enumerate}
		\item[a.] $\Delta(\El(N))\prec^s_{\emm\bten\emm} \El(N)\bten\El(N)$. 
		\item[b.] There is a unitary $u\in\emm\bten\emm$ such that $u(\Delta(\El(Q)))u^{\ast}\subseteq \El(Q)\bten\El(Q)$.
	\end{enumerate}
\end{theorem}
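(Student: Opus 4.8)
The plan is to import the finite-stage analysis of Section~3 through the exhaustion $\emm=(\bigcup_{k\geq 2}\El(\G_k))''$, where $\G_k=(N_1\times\cdots\times N_k)\rtimes Q\in\mathscr S_{k,n}$ are irreducible property (T) subfactors. Writing $R_i:=N_i\rtimes Q\in\mathcal Rips_T(Q)$ and $N_{\le k}:=N_1\times\cdots\times N_k$, the essential new feature is that $N=\oplus_i N_i$ is an infinite direct sum, so $\El(N)$ fails property (T); hence the commuting-property-(T)-subalgebra machinery of Theorems \ref{controlprodpropt1} and \ref{controlprodpropt2}, on which Theorem \ref{commutationcontrolincommultiplication2} is built, cannot be applied to $\Delta(\El(N))$ directly, but only to its finite pieces $\Delta(\El(N_{\le k}))$, which do have property (T).

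For part (a), I would work inside the inductive limit $\emm\bten\emm=(\bigcup_l \El(\G_l\times\G_l))''$, noting that each finite stage $\El(\G_l\times\G_l)=\El(\G_l)\bten\El(\G_l)$ embeds into $\El\big((R_1\times\cdots\times R_l)^{\times 2}\big)$, a group factor of a finite product of $2l$ groups each hyperbolic relative to $Q$. Fix $k$. Since $\Delta(\El(N_{\le k}))$ has property (T), a standard rigidity argument forces the trace-preserving expectations onto the finite stages to converge to the identity uniformly on its unit ball, so by Popa's criterion $\Delta(\El(N_{\le k}))\prec_{\emm\bten\emm}\El(\G_l\times\G_l)$ for some $l\geq k$; the same applies to a commuting property (T) partner $\Delta(\El(N_j))$, and one arranges both to be trapped in a common stage $l$. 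Inside $\El\big((R_1\times\cdots\times R_l)^{\times 2}\big)$ I can then run the dichotomy argument of Theorem \ref{commutationcontrolincommultiplication} verbatim, invoking Theorem \ref{controlprodpropt2} for the commuting property (T) pair and pulling the conclusion back to $\emm\bten\emm$ through the malnormality and quasinormalizer lemmas \cite[Lemmas~2.3--2.6]{CDK19}; using \cite[Lemma~2.8(2)]{DHI16} to exclude intertwining into $\emm\bten 1$ then yields $\Delta(\El(N_{\le k}))\prec_{\emm\bten\emm}\El(N)\bten\El(N)$.

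Because $N_{\le k}\triangleleft\G$, the subalgebra $\Delta(\El(N_{\le k}))$ is normalized by $\Delta(\emm)$, and $\Delta(\emm)'\cap(\emm\bten\emm)=\mathbb C1$; hence \cite[Lemma~2.4(3)]{DHI16} upgrades the above to $\Delta(\El(N_{\le k}))\prec^s_{\emm\bten\emm}\El(N)\bten\El(N)$. Letting $k\nearrow\infty$ and running a maximality argument on a projection witnessing a putative failure of strong intertwining then promotes these stagewise statements to $\Delta(\El(N))\prec^s_{\emm\bten\emm}\El(N)\bten\El(N)$, which is part (a). For part (b), once (a) is available the argument is formally identical to the finite case: I would follow the proof of Theorem \ref{commutationcontrolincommultiplication2}(ii), i.e. \cite[Theorem~3.10(ii)]{CDK19}, which combines (a) with the regularity of the comultiplication ($\Delta(\emm)\subseteq\mathcal N_{\emm\bten\emm}(\Delta(\El(N)))''$ and $\Delta(\emm)'\cap(\emm\bten\emm)=\mathbb C1$) to first conjugate $\Delta(\El(N))$ into $\El(N)\bten\El(N)$ and then force the normalizing part $\Delta(\El(Q))$ into $\El(Q)\bten\El(Q)$, producing the unitary $u$.

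I expect the main obstacle to be precisely the two points where infiniteness enters. The first is the trapping of each property (T) piece $\Delta(\El(N_{\le k}))$ into a finite stage $\El(\G_l\times\G_l)$, so that relative hyperbolicity becomes available; this is a rigidity-versus-inductive-limit phenomenon that should follow from property (T) but must be set up carefully, especially to trap a commuting pair simultaneously. The second is the passage to the limit over $k$: the stages $l=l(k)$ need not be bounded, so the finite-stage strong intertwinings must be patched into a single statement for $\Delta(\El(N))$, which is most safely handled by the maximality argument indicated above rather than by a naive union.
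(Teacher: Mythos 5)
Your finite-stage analysis coincides with the paper's: both arguments trap each property (T) piece $\Delta(\El(N_F))$ (your $\Delta(\El(N_{\le k}))$) into a finite stage $\El(G_{F_1})\bten\El(G_{F_2})$ by the standard property (T) versus inductive limit argument, run the commuting property (T) dichotomy of Theorem \ref{controlprodpropt2} inside a finite product of groups hyperbolic relative to $Q$, and pull back via the lemmas of \cite{CDK19} and \cite[Lemma~2.8(2)]{DHI16} to obtain $\Delta(\El(N_F))\prec^s_{\emm\bten\emm}\emm\bten\El(N)$ for every finite $F$. The genuine gap is the passage from these stagewise statements to $\Delta(\El(N))$ itself. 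Intertwining is not closed under increasing unions (every finite-dimensional corner of a diffuse abelian algebra intertwines into $\mathbb C 1$ while the limit does not), and a ``maximality argument on a projection witnessing a putative failure of strong intertwining'' is the standard device for upgrading $\prec$ to $\prec^s$ for a \emph{fixed} subalgebra; it provides no mechanism by which the fixed target $\El(N)\bten\El(N)$ captures all of $\Delta(\El(N))$ rather than merely each $\Delta(\El(N_F))$. You flag this as the main obstacle but the tool you propose does not resolve it.

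The paper closes this gap by routing through relative amenability: each stagewise $\Delta(\El(N_F))\prec^s_{\emm\bten\emm}\emm\bten\El(N)$ yields $\Delta(\El(N_F))\lessdot\,\emm\bten\El(N)$, and relative amenability \emph{is} stable under inductive limits by \cite[Lemma~2.7]{DHI16}, giving $\Delta(\El(N))\lessdot\,\emm\bten\El(N)$; then \cite[Theorem~1.4]{PV12}, applied in the crossed product decomposition $\emm\bten\emm=(\emm\bten\El(N))\rtimes Q$ and combined with the facts that $\Delta(\El(N))$ is normalized by $\Delta(\emm)$ (normality of $N$ in $\G$) and $\Delta(\emm)'\cap(\emm\bten\emm)=\mathbb C1$, converts relative amenability back into $\Delta(\El(N))\prec^s_{\emm\bten\emm}\emm\bten\El(N)$; repeating on the left tensor leg and intersecting with \cite[Lemma~2.8(2)]{DHI16} gives part (a). Your part (b) is also not quite the paper's route: the paper does not deduce (b) from (a) through \cite[Theorem~3.10(ii)]{CDK19}, but reruns the finite-stage trapping directly for $\Delta(\El(Q))$ (which has property (T) outright, so no limiting issue arises), obtains $\Delta(\El(Q))\prec_{\emm\bten\emm}\emm\bten\El(Q)$, and then uses the malnormality of $Q$ in $\G$ via \cite[Lemma~2.2]{CDK19} to promote the intertwining to an honest unitary conjugation into $\emm\bten\El(Q)$, before symmetrizing over the two tensor legs.
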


\begin{proof}
	Let $\G=(N_1\times N_2\times \cdots )\rtimes (Q_1\times\cdots\times Q_n)$. For every finite set $F$ denote $N_F=\underset{i\in F}{\times} N_i$ and $G_F=N_F\rtimes Q$. When $F$ is a singleton set $\{i\}$ we simply denote   $G_F$ by $G_i$. Note that $G_F$ has property (T) for all finite set $F$ and $\G=\bigcup G_F$. Also note that $\emm\bten\emm$ is union of property (T) factors of the form $\El(G_F)\bten\El(G_{F'})$ for $F,F'$ finite. Hence for every finite set $F$ there is a finite set $F'$ such that we have;
	\begin{align}
		\Delta(\El(G_F))\prec_{ \emm\bten \emm}^s \El(G_{F_1})\bten \El(G_{F_2}) \ for \ some\ finite\ sets\ F_1\ and \ F_2.
	\end{align} 
Since $\El(G_{F_1})\bten \El(G_{F'})\subseteq \emm\bten \El(G_{F'})$ we get that $\Delta(\El(G_F))\prec_{ \emm\bten\emm}\emm\bten \El(G_{F'})$. Since $\El(G_F)$ is irreducible in $\emm$ we get that $\Delta(\El(G_F))\prec_{ \emm\bten\emm}\emm\bten \El(G_{F'})$. Let $\emm\bten\emm=\tilde{\emm}$. Using the fact that $\Delta(\El(N_F))\subseteq \Delta(\El(G_F))$ and the normality of $N_F\in\G$ we conclude that $\Delta(\El(N_F))\prec_{\tilm}\El(G_{F_1})\bten \El(G_{F2})$. Hence there exists a nonzero projection $r\in \Delta(\El(N_F)),t\in \El(G_{F_1})\bten \El(G_{F_2})$, a nonzero partial isometry $w\in r(\tilm)t$ and a $\ast$-isomorphism $\phi:r(\Delta(\El(N_F)))r\rightarrow  \Bee:=\phi(r(\Delta(\El(N_F)))r)\subseteq t(\El(G_{F_1})\bten\El(G_{F_2}))t$ such that 
\begin{align}
	\phi(x)w=wx\ for\ all\ x\in r(\Delta(\El(N_F)))r
\end{align}  
Since $\El(N_F)$ is a factor we can assume that $r=\Delta(r_1\otimes r_2)$ where $r_1\in \El(N_i)$ and $r_2\in\El(\widehat N_i)$. Note that here $\widehat N_i=\underset{j\neq i,j\in F}{\times} N_j$. Hence $\Bee=\phi(r_1(\El(N_i))r_1\bten r_2(\El(\widehat N_i))r_2  )=: \Bee_1\vee\Bee_2$. Where we denote $\Bee_1=r_1(\El(N_i))r_1$ and $\Bee_2=r_2(\El(\widehat N_i))r_2$. Notice that $\Bee_1$ and $\Bee_2$ are two commuting property (T) subalgebra of $\El(G_{F_1})\bten\El(G_{F_2})$.  Without loss of generality let us assume that $F=\{1,\cdots,m \},F_1=\{1,\cdots,l\}$ and $F_2=\{1,\cdots,k\}$. Observe that $G_{F_1}\times G_{F_2}<(G_1\times\cdots\times G_l)\times (G_1\times \cdots\times G_k)=:H\times K$. Denote $\El(H\times K)=\enn$. Hence we have $\Bee_1$ and $\Bee_2$ are two commuting property (T) subalgebra of $\enn $ where $G_i$'s are hyperbolic relative to $Q$. By applying \cite[Theorem~5.3]{CDK19} we get that for every $j\in\overline{1,k}$ one of the following must hold:
\begin{enumerate}
	\item[1)] There exists $i\in\{1,2\}$ such that $\Bee_i\prec_{ \enn}\El(H\times\widehat K_j) $;
	\item[2)] $\Bee=\Bee_1\vee\Bee_2\prec_{ \enn}\El(H\times \widehat K_j\times Q)$,
\end{enumerate}   
Where $\widehat K_j=\underset{s\neq j,s\in \overline{1,k}}{\times } G_s$. 

Now following the proof of Theorem~\ref{commutationcontrolincommultiplication} we get that for every $j\in\overline{1,k}$ there is $i\in\{1,2\}$ $\Bee_i\prec_{\enn} \El(H)\bten \El(\widehat N_j)$. Now composing the intertwining we get that for every $i\in\overline{1,l},j\in\overline{1,k}$ we have $\Delta(\El(N_{i}))$ or $\Delta(\El(\widehat N_i))\prec_{\El(H)\bten\El(K) }\El(H)\bten \El(\widehat N_j)$. If for some $j$ we have $\Delta(\El(N_{i}))\prec_{\El(H)\bten\El(K) }\El(H)\bten \El(\widehat N_j)$ for all $i$, then we get $\Delta(\El(N_{i}))\prec^s_{\El(H)\bten\El(K) }\El(H)\bten \El(\widehat N_j)$ for all $i$ since $\Delta(N_i)$ is normal. Hence by \cite[Corollary~1]{CDHK20} we get that $\Delta(\El(N_F))\prec^s_{\El(H)\bten\El(K) }\El(H)\bten \El(\widehat N_j)\subseteq \emm\bten \El(N)$. Hence we have $\Delta(\El(N_F))\prec^s_{\emm\bten\emm} \emm\bten \El(N)$. If above is not the case then for every $j$ we have some $i_j$ such that $\Delta(\El(\widehat N_{i_j}))\prec_{\El(H)\bten\El(K) }\El(H)\bten \El(\widehat N_j)$. Then by taking $j_1\neq j_2$ we can conclude that $\Delta(\El(N_{i}))(\subseteq \Delta(\widehat N_{i{j_1}})\cup \Delta (\widehat N_{i{j_2}}))\prec_{\El(H)\bten\El(K) }\El(H)\bten \El(\widehat N_j)$ for all $i$. Since if for every $j_1\neq j_2$ we have $i_{j_1}=i_{j_2}$, then we get that $\Delta (\widehat N_{i{j_1}})\prec^s_{\El(H)\bten\El(K)} \El(H)\bten \El(\widehat N_j)$ for every $j$. Hence by \cite[Lemma~2.8(2)]{DHI16} we get that $\Delta (\widehat N_{i{j_1}})\prec^s_{\El(H)\bten\El(K)}\El(H)\bten 1$. This is a contradiction to Lemma~\ref{io11gen}. Hence we get that for every finite set $F$, $\Delta(\El(N_F))\prec^s_{\emm\bten\emm} \emm\bten\El(N)$. This implies that $\Delta(\El(N_F))\lessdot \emm\bten\El(N)$ for every finite set $F$. By \cite[Lemma~2.7]{DHI16} Lemma~2.6 we get that $\Delta(\El(N))\lessdot \emm\bten\El(N)$. Hence by \cite[Theorem~1.4]{PV12} and normality of $N$ we get that $\Delta(\El(N))\prec^s_{ \emm}\emm\bten\El(N)$. Similarly working with the left side of the tensor we get $\Delta(\El(N))\prec^s_{\emm\bten\emm} \El(N)\bten \emm$. Now by \cite[Lemma~2.8(2)]{DHI16} we get that $\Delta(\El(N))\prec^s_{\emm\bten\emm} (\emm\bten\El(N))\cap (\El(N)\bten \emm)=\El(N)\bten\El(N)$. This proves part a. of the Theorem. 

In order to show part b. we first show that there is an unitary $u\in \emm\bten\emm$ such that $u(\Delta(\El(Q)))u^{\ast}\subseteq \emm\bten\El(Q)$. First note that by doing similar argument as above together with second part of Theorem~\ref{commutationcontrolincommultiplication}, we can show that $\Delta(\El(Q))\prec_{ \emm \bten \emm} \enn\bten\El(Q)$. That is, there are projections $p\in\Delta(\El(Q)),q\in\emm\bten\El(Q)$, a nonzero partial isometry $v\in q(\emm\bten\El(Q))q$ and a $\ast$-isomorphism $\phi:p(\Delta(\El(Q)))p\rightarrow \mathcal C:=\phi(p(\Delta(\El(Q)))p)\subseteq q(\emm\bten\El(Q))q$ such that;
\begin{align}\label{actingdiscritise}
	\phi(x)v=vx\ for\ all\ x\in p\Delta(\El(Q))p.
\end{align} 
We also have $vv^*\in\mathcal C'\cap q(\emm\bten \emm)q$ and $v^*v\in p\Delta(\El(Q))p'\cap p(\emm\bten\emm)p$ and moreover we can assume that the support $s(E_{\emm\bten\El(Q)}(vv^{\ast}))=q$. Note that $\mathcal C\nprec_{ \emm\bten\El(Q)}\emm\bten 1$, as if $\mathcal C\prec_{ \emm\bten\El(Q)}\emm\bten 1$, by the first part of the \cite[Theorem~5.3]{CDK19} this would imply that $\El(Q)\prec_{ \emm\bten\El(Q)}\emm\bten 1$ which contradicts \cite[Proposition~7.2.2]{IPV10}. Since $Q<\G$ is malnormal, by \cite[Lemma~2.2]{CDK19} we get that $vv^*\in \mathcal C'\cap q(\emm\bten \emm)q\subseteq \mathcal C'\cap p(\emm\bten\El(Q))p$ and hence by \ref{actingdiscritise} we get that $vp\Delta(\El(Q))pv^*=\mathcal Cvv^*\subseteq \emm\bten \El(Q)$. Now we get a unitary $u\in \emm\bten\emm$ such that $u\Delta(\El(Q))u^*\subseteq \emm\bten\El(Q)$ as $\emm\bten\El(Q)$ is a factor. Arguing similarly on the other side of the tensor we get that there is a unitary $r\in \emm\bten\emm$ such that $u\Delta(\El(Q))u^*\subseteq \El(Q)\bten\emm$. Hence by applying \cite[Lemma~2.8(2)]{DHI16} again we conclude that $\Delta(\El(Q))\prec^s_{\emm\bten\emm} \El(Q)\bten\El(Q)$. Note that $\Delta(\El(Q))\nprec_{\emm\bten\emm}\emm\bten 1,1\bten\emm$, and hence applying the same argument as above we conclude the desire result to get part b. of the theorem.
\end{proof}

\begin{theorem}\label{acting-productfeature}
	Let $\G\in\mathscr{S}_{\infty,n}$ and $\La$ be any group such that $\emm=\El(\G)=\El(\La)$. With the same comultiplication notation from previous theorem, there exist subgroups $\Phi_1,\Phi_2\leq \Phi\leq \La$ such that 
	\begin{enumerate}
		\item $\Phi_1,\Phi_2$ are icc, commuting, property (T) groups such that $\Phi=\Phi_1\times\Phi_2$;
		\item $\mathcal Q\mathcal N^{(1)}_{\La}(\Phi)=\Phi$; 
		\item There exists $\mu\in\mathcal U(\emm)$ such that $\mu\El(Q)\mu^*=\El(Q)$.
		  
	\end{enumerate} 
\end{theorem}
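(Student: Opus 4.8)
The plan is to reconstruct the acting group $Q$ inside $\La$ by running the same machinery developed for the finite case---namely Theorems~\ref{toproductgroupcorners}, \ref{somefiniteintersection} and \ref{parabolicQ} (equivalently \cite[Theorems~5.6--5.8]{CDK19})---but feeding it with Theorem~\ref{semidirectrigiditys-infty1} in place of Theorems~\ref{commutationcontrolincommultiplication} and \ref{commutationcontrolincommultiplication2}, and replacing the finite peeling-off of the factors $N_i$ by a property (T) compactness argument. Since the acting group $Q=Q_1\times\cdots\times Q_n$ is the same finite product of biexact, property (T) groups as before, most of the algebraic bookkeeping carries over verbatim; the only genuinely new point is that the fibre $N=\bigoplus_i N_i$ is now an infinite direct sum, so one must control intertwinings into the inductive limit $\El(N)$.

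First I would promote Theorem~\ref{semidirectrigiditys-infty1}(b) into the hypothesis needed by the corner-reconstruction step. From that theorem there is a unitary $u\in\emm\bten\emm$ with $u\Delta(\El(Q))u^\ast\subseteq\El(Q)\bten\El(Q)=\bigl(\El(Q_1)\bten\cdots\bten\El(Q_n)\bigr)\bten\bigl(\El(Q_1)\bten\cdots\bten\El(Q_n)\bigr)$. The subalgebras $u\Delta(\El(Q_k))u^\ast$, $k\in\overline{1,n}$, are mutually commuting property (T) subfactors; because every $Q_i$ is biexact and weakly amenable, \cite[Theorem~1.4]{PV12} forces each of them to intertwine into one of the $2n$ tensor slots, and a counting argument then yields that for every $j$ there is an $i$ with $\Delta(\El(Q_{\bar i}))\prec_{\emm\bten\emm}\emm\bten\El(Q_{\bar j})$. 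This is precisely the hypothesis of Theorem~\ref{toproductgroupcorners}.

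Next I would translate this comultiplication intertwining into an honest intertwining inside $\emm$ using \cite[Theorem~4.1]{DHI16}, producing a subgroup $\Sg<\La$ with $C_\La(\Sg)$ non-amenable and $\El(Q_{\bar i})\prec_\emm\El(\Sg)$, and then run the \cite[Claim~5.2]{CU18} strategy verbatim---using that $Q<\G$ is almost malnormal and has property (T)---to obtain subgroups $\Phi_1,\Phi_2\leqslant\Phi\leqslant\La$, a unitary $\mu$ and a central projection $z\in\mathcal P(\mathcal Z(\El(\Phi)))$ with: $\Phi_1,\Phi_2$ infinite, commuting, finite-by-icc, property (T); $[\Phi:\Phi_1\Phi_2]<\infty$; $\mathcal{QN}^{(1)}_\La(\Phi)=\Phi$; and the corner equality $\mu\El(\Phi)z\mu^\ast=h\El(Q)h$ with $h=\mu z\mu^\ast\in\mathcal P(\El(Q))$. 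To remove the projection $z$, I would reproduce Lemma~\ref{somefiniteintersection}: the almost malnormality of $Q<\G$ forces $|\Phi\cap\Phi^\lambda|<\infty$ for a suitable $\lambda\in\La\setminus\Phi$, which upgrades the corner equality to the full equality $\mu\El(\Phi)\mu^\ast=\El(Q)$ exactly as in the proof of Theorem~\ref{parabolicQ}.

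The step I expect to be the main obstacle is the final upgrade of $\Phi_1,\Phi_2$ to honest icc groups with $\Phi=\Phi_1\times\Phi_2$. Choosing $z$ maximal so that $\El(\Phi_i)t\nprec_\emm\El(Q)$ for every projection $t\in\mathcal Z(\El(\Phi)z^\perp)$ and $i=1,2$, one analyzes the commuting property (T) algebras $\El(\Phi_1)t,\El(\Phi_2)t$ as in Theorem~\ref{parabolicQ}. The difficulty is that the finite-case argument peels off $N_1,\dots,N_m$ one factor at a time and terminates after $m-1$ steps, whereas here $N=\bigoplus_i N_i$ has infinitely many summands and the iteration never stops. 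The fix is to exploit property (T) of $\El(\Phi_i)$: any intertwining of $\El(\Phi_i)t$ (or $\El(\Phi)t$) into the inductive limit $\El(N)=\bigl(\bigcup_F\El(N_F)\bigr)''$, resp.\ $\El(N\rtimes Q)$, must by \cite[Corollary~1]{CDHK20} already intertwine into a finite subproduct $\El(N_F)$, resp.\ $\El(N_F\rtimes Q)$. This confines the entire analysis to $\El(N_F\rtimes Q)$ with $N_F\rtimes Q\in\mathscr S_{|F|,n}$, where Theorem~\ref{controlprodpropt1}, Theorem~\ref{controlprodpropt2}, Lemma~\ref{io11gen} and the finite iteration of Theorem~\ref{parabolicQ} apply and rule out the residual ``core'' case $\El(\Phi)t\prec_\emm\El(N\rtimes Q)$, forcing $t=0$. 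Hence $z=1$ and $\mu\El(\Phi)\mu^\ast=\El(Q)$ with $\Phi=\Phi_1\times\Phi_2$ icc, which completes the proof.
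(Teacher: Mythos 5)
Your proposal is correct and follows essentially the same route as the paper: invoke Theorems~\ref{toproductgroupcorners}, \ref{somefiniteintersection} and the first part of Theorem~\ref{parabolicQ} (fed by the comultiplication control of Theorem~\ref{semidirectrigiditys-infty1}) to get the corner equality, choose $z$ maximal, and then use property (T) of $\El(\Phi_i)t$ to force any intertwining into the inductive limit to land in a finite stage $\El(G_F)$, reducing to the finite-case analysis of Theorem~\ref{parabolicQ} and \cite[Theorem~5.3]{CDK19}. Your write-up is in fact somewhat more careful than the paper's, since you explicitly verify the hypothesis of Theorem~\ref{toproductgroupcorners} from Theorem~\ref{semidirectrigiditys-infty1}(b), a step the paper leaves implicit.
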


\begin{proof}
	By Theorem~\ref{toproductgroupcorners},Lemma~\ref{somefiniteintersection} and first part of Theorem~\ref{parabolicQ} we get subgroups $\Phi_1,\Phi_2\leq \Phi\leq \La$ such that $\Phi_1,\Phi_2$ are icc, property (T), commuting, finite-by-icc subgroups of $\La$ such that $[\Phi;\Phi_1\Phi_2]<\infty$ and there exist $\mu \in \mathcal U(\emm)$ and  $z\in \mathcal P(\mathcal Z(\El(\Phi)))$ with  $h= \mu z\mu^*\in \mathcal P (\El(Q))$ satisfying 
	\begin{equation}\label{5} \mu \El(\Phi) z\mu^* = h \El(Q)h.
	\end{equation}

Following the argument in Theorem~\ref{parabolicQ}, in \eqref{5} we can pick $z\in \mathcal Z(\El(\Phi))$ maximal with the property that for every projection $t \in \mathcal Z(\El(\Phi)z^{\perp})$ we have \begin{equation}
	L(\Phi_i)t \nprec_\emm \El(Q)\text{ for }i=1,2.
\end{equation}

\vskip 0.05in
Next fix $t\in \mathcal Z(\El(\Phi)z^{\perp})$. Since $\El(\Phi_1)t$ and $\El(\Phi_2)t$ are commuting property (T) von Neumann algebras of $\emm$, which is an increasing union of property (T) algebras namely $\El(G_F)$ (same notation as in Theorem~\ref{semidirectrigiditys-infty1}). Hence  $\El(\Phi_1)t$ and $\El(\Phi_2)t$ intertwines into some $\El(G_F)$. Now using Theorem~\ref{parabolicQ} in conjunction with \cite[Theorem~5.3]{CDK19} we conclude the theorem.
\end{proof}

\begin{theorem}\label{semidirect-S_inftystructure}
	Let $\G=N\rtimes Q\in\mathscr{S}$, $\La$ be any group and $\Theta:\El(\G)\rightarrow \El(\La)$ be a $\ast$-isomorphism. Then the group $\La$ admits a semidirect product decomposition  $\La=\Sg\rtimes \Phi $ satisfying the following properties: There is a group isomorphism $\delta: Q\rightarrow \Phi$, a character $\zeta:Q\rightarrow \mathbb T$, a $\ast$-homomorphism $\Theta_0:\El(N)\rightarrow \El(\Sg)$ and a unitary $w\in \El(\La)$ such that for every $x\in\El(N)$ and $\g\in Q$ we have 
	$$\Theta(xu_{\g})=\zeta(\g)w\Theta_0(x)v_{\delta(\g)}w^{\ast} .$$
	Where $\{ u_{\g}|\g\in Q\}$ and $\{v_{\lambda}|\lambda\in \Phi\}$ are the canonical unitaries of of $\El(Q)$ and $\El(\Phi)$ respectively.
\end{theorem}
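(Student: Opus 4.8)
The plan is to run the endgame of the finite-case argument (the proof of Theorem~\ref{semidirectprodreconstruction}) with the inductive-limit location results in place of their property (T) counterparts. Write $\emm=\El(\La)$, set $P=\Theta(\El(N))$ and $R=\Theta(\El(Q))$, and let $\Delta\colon\emm\rightarrow\emm\bten\emm$ be the comultiplication along $\La$. By Theorem~\ref{acting-productfeature} the acting group is already reconstructed: there are commuting icc property (T) subgroups $\Phi_1,\Phi_2\leq\Phi\leq\La$ with $\Phi=\Phi_1\times\Phi_2$, $\mathcal{QN}^{(1)}_\La(\Phi)=\Phi$, and a unitary $\mu\in\mathcal U(\emm)$ with $\mu\El(\Phi)\mu^\ast=R$. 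It remains to locate the core $P$ and to assemble the stated intertwining identity.

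For the core I would begin from Theorem~\ref{semidirectrigiditys-infty1}(a), which gives $\Delta(P)\prec^s_{\emm\bten\emm}P\bten P$, and part (b), which conjugates $\Delta(R)$ into $R\bten R$. Feeding these into the finite-set approximation of \cite[Claim~4.5]{CU18}, which upgrades a strong intertwining of a regular subalgebra under a comultiplication to a genuine coproduct inclusion, I expect to obtain a unitary $u\in\mathcal U(\emm)$ such that, writing $\mathcal A=uPu^\ast$, one has $\Delta(\mathcal A)\subseteq\mathcal A\bten\mathcal A$; the regularity of $P$ in $\emm$ required here follows from $N$ being normal in $\G$. Then \cite[Lemma~2.8]{CDK19} produces a subgroup $\Sg\leq\La$ with $\mathcal A=\El(\Sg)$, and the concluding part of the proof of \cite[Theorem~5.2]{CU18} yields the splitting $\La=\Sg\rtimes\Phi$ with $\Sg$ normal in $\La$.

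To finish I would fuse the two conjugations into a single unitary $w\in\mathcal U(\emm)$ for which ${\rm Ad}(w^\ast)$ carries $P$ onto $\El(\Sg)$ and $R$ onto $\El(\Phi)$ at once; then $\Theta_0:={\rm Ad}(w^\ast)\circ\Theta|_{\El(N)}$ is the desired $\ast$-homomorphism into $\El(\Sg)$. For $\g\in Q$ the unitary $W_\g:={\rm Ad}(w^\ast)\Theta(u_\g)$ normalizes $\El(\Sg)$, so by the normalizer structure of $\El(\Sg)$ inside $\El(\Sg)\rtimes\Phi$ it has the form $W_\g=a_\g v_{\delta(\g)}$ with $a_\g\in\mathcal U(\El(\Sg))$ and $\delta(\g)\in\Phi$; but $W_\g$ also lies in $\El(\Phi)$, and since $\Sg\cap\Phi=\{e\}$ gives $\El(\Sg)\cap\El(\Phi)=\mc1$, the factor $a_\g$ reduces to a scalar $\zeta(\g)\in\mathbb T$. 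Multiplicativity of $\g\mapsto W_\g$ then forces $\delta\colon Q\rightarrow\Phi$ to be a homomorphism and $\zeta\colon Q\rightarrow\mathbb T$ a character, while injectivity of $\Theta$ and $\mathcal{QN}^{(1)}_\La(\Phi)=\Phi$ make $\delta$ an isomorphism. Collecting these identities yields $\Theta(xu_\g)=\zeta(\g)\,w\,\Theta_0(x)\,v_{\delta(\g)}\,w^\ast$.

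I expect the main obstacle to be the first step of the middle paragraph in the inductive-limit regime: since $\El(N)=(\bigcup_F\El(N_F))''$ is neither finitely generated nor property (T), the upgrade from the strong intertwining $\Delta(P)\prec^s P\bten P$ to the honest $\Delta$-invariant inclusion $\Delta(\mathcal A)\subseteq\mathcal A\bten\mathcal A$ has to be carried out compatibly over the finite approximants, with the conjugating unitary $u$ chosen independently of the approximant, and one must verify that the splitting $\La=\Sg\rtimes\Phi$ persists in the limit. A secondary difficulty is ensuring that the core unitary $u$ and the acting-group unitary $\mu$ from Theorem~\ref{acting-productfeature} can be implemented by one common $w$; this is where the quasinormalizer identity $\mathcal{QN}^{(1)}_\La(\Phi)=\Phi$ and the almost malnormality of $Q$ in $\G$ enter, excluding any residual overlap between core and acting parts.
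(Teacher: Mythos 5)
Your proposal reproduces the paper's treatment of the core: Theorem~\ref{semidirectrigiditys-infty1}(a), the upgrade to $\Delta(\Aa)\subseteq\Aa\bten\Aa$ via \cite[Claim~4.5]{CU18}, the identification $\Aa=\El(\Sg)$ via \cite[Lemma~2.8]{CDK19}, and the splitting $\La=\Sg\rtimes\Phi$; this part matches the paper. The genuine gap is in how you recover the character $\zeta$ and the scalar form of the cocycle. The paper does this with a height argument: from Theorem~\ref{semidirectrigiditys-infty1}(b) it deduces $h_{Q\times Q}(u\Delta(\El(Q))u^{\ast})>0$ as in the first part of \cite[Theorem~5.2]{CDK19}, descends to $h_Q(\mu\Phi\mu^{\ast})>0$ using Theorem~\ref{acting-productfeature} together with \cite[Lemma~2.4, 2.5]{CU18}, and then invokes \cite[Theorem~3.1]{IPV10} to produce a unitary $\mu_0$ with $\mathbb{T}\mu_0\Phi\mu_0^{\ast}=\mathbb{T}Q$. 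That is the step which converts the algebra-level conjugacy $\mu\El(\Phi)\mu^{\ast}=\Theta(\El(Q))$ into the statement that each $\Theta(u_{\g})$ is a scalar multiple of a conjugated canonical unitary $v_{\delta(\g)}$. Your proposal omits this entirely.

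Your substitute, the normalizer-structure argument, is circular at the decisive point. Granting $\El(\Sg)'\cap\El(\La)=\mc 1$, a unitary normalizing $\El(\Sg)$ inside $\El(\Sg)\rtimes\Phi$ does have the form $a_{\g}v_{\delta(\g)}$ with $a_{\g}\in\mathscr U(\El(\Sg))$ (the action is outer by irreducibility), and this already yields the isomorphism $\delta:Q\rightarrow\Phi$. But to conclude that $a_{\g}$ is a scalar you use that $W_{\g}$ lies in $\El(\Phi)$, which requires the \emph{same} unitary $w$ to carry $\Theta(\El(N))$ onto $\El(\Sg)$ and $\Theta(\El(Q))$ onto $\El(\Phi)$ simultaneously. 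The two conjugating unitaries you actually possess --- $u$ from the core step and $\mu$ from Theorem~\ref{acting-productfeature} --- are a priori unrelated, and your appeal to $\mathcal{QN}^{(1)}_\La(\Phi)=\Phi$ and almost malnormality does not manufacture a common one. Without it, $a_{\g}$ is merely a unitary $1$-cocycle for the action $\Phi\ca\El(\Sg)$, and untwisting such a cocycle into a coboundary times a character is precisely the content of the height computation plus \cite[Theorem~3.1]{IPV10} that you skipped; no softer structural fact in the paper substitutes for it.
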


\begin{proof} 
From Theorem~\ref{semidirectrigiditys-infty1} part b. we get that there is a unitary $u\in\emm\bten\emm$ such that $u\Delta(\El(Q))u^{\ast}\subseteq \El(Q)\bten\El(Q)$. Notice that proceeding exactly as in the proof of the first part of \cite[Theorem~5.2]{CDK19}, we get that the height $h_{Q\times Q}(u\Delta(\El(Q))u^{\ast})>0$. This together with Theorem~\ref{acting-productfeature} and \cite[Lemma~2.4,2.5]{CU18} implies that $h_Q(\mu\Phi\mu^{\ast})>0$ and then by \cite[Theorem~3.1]{IPV10} there is a unitary $\mu_0\in\emm$ such that $\mathbb T\mu_0\Phi\mu_0^{\ast}=\mathbb TQ$. 

 By using Theorem~\ref{semidirectrigiditys-infty1} part a. we get that $\Delta(N)\prec_{ \emm \bten \emm}^s\El(N)\bten\El(N)$. Proceeding exactly as in the proof of \cite[Claim~4.5]{CU18} we can show that $\Delta(\Aa)\subseteq \Aa\bten\Aa$ where $\Aa=u\El(N)u^{\ast}$. Then by \cite[Lemma~2.8]{CDK19} there is a subgroup $\Sg\leq \La$ such that $\Aa=\El(\Sg)$. Since $u_{\g}$'s normalizes $\Aa$, it follows that $v_{\delta(\g)}$ normalizes $\Sg$ for all $\g$. Moreover, since $\El(\La)=A\rtimes Q$, then $\La$ admits a semidirect product decomposition $\La=\Sg\rtimes\Phi$. This proves the Theorem.   

\end{proof}

\section{Trivial Fundamental Group arising from class $\mathscr{S}_{m,n}$}

In this section we show that the group von Neumann algebra associated to groups from class $\mathscr S_{m,n}$ has trivial fundamental group for all $m,n\geq 2$. We use similar method from \cite{CDHK20} to locate two commuting property (T) subalgebras and identify acting group upto unitary conjugation. Then we use height computation to conclude the conclusion using \cite{KV15}.   

\begin{theorem}\label{controlcommutingpropTsubalgebratrivialf_group}
	Let $N\rtimes Q\in \mathscr{S}_{m,n}$. Also let $\mathcal A_1,\mathcal A_2\subseteq \El(N\rtimes Q)=\emm$ be two commuting, property (T), type $\rm II_1$ factors. Then for all $k\in\overline{1,m}$ one of the following holds:
	\begin{enumerate}
		\item[i.] There exists $i\in\overline{1,2}$ such that $\mathcal A_i\prec_{ \emm}\El(\widehat N_k)$;
		\item[ii.] $\mathcal A_1\vee\mathcal A_2\prec_{ \emm} \El(Q).$ 
	\end{enumerate} 
\end{theorem}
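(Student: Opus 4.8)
The plan is to relate $\emm=\El(\G)$ to the ambient product algebra and run the relative-hyperbolicity control results of \cite{CDK19} one factor at a time. Recall from Notation~\ref{semidirectt} that $\G=\times_Q\G_i$ embeds into $G:=\G_1\times\cdots\times\G_m$, where each $\G_i=N_i\rtimes Q$ is hyperbolic relative to $\{Q\}$ and $Q$ sits diagonally; write $\tilde\emm=\El(G)$, so that $\emm=\El(\G)\subseteq\tilde\emm$ and $\mathcal A_1,\mathcal A_2$ are commuting property (T) subfactors of $\tilde\emm$. First I would fix $k\in\overline{1,m}$ and apply Theorem~\ref{controlprodpropt2} to the product $G=\G_1\times\cdots\times\G_m$ (with peripheral structure $Q$ in each factor). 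This yields the dichotomy: either (1) there is $i$ with $\mathcal A_i\prec_{\tilde\emm}\El(\widehat\G_k)$, or (2) $\mathcal A_1\vee\mathcal A_2\prec_{\tilde\emm}\El(\widehat\G_k\times Q_k)$, where $Q_k$ is the copy of $Q$ inside $\G_k$.

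The next step is to transfer each alternative from $\tilde\emm$ down to $\emm$. The key observation is that $\widehat N_k=\times_{j\neq k}N_j=\G\cap\widehat\G_k$ is normal not only in $\G$ but in $G$ (each $N_j\trianglelefteq\G_j$). Hence in alternative (1), \cite[Lemma~2.3]{CDK19} together with this normality ($h\widehat\G_k h^{-1}=\widehat\G_k$) gives $\mathcal A_i\prec_{\tilde\emm}\El(\widehat N_k)$, and since $\widehat N_k$ is normal in $G$ the defining Popa sequence can be tested against all of $\tilde\emm$, so the intertwining descends to $\mathcal A_i\prec_{\emm}\El(\widehat N_k)$ directly (cf.\ \cite[Lemma~2.4]{CDK19}); this is exactly conclusion (i). In alternative (2) the target $\widehat\G_k\times Q_k$ is no longer normal, but $Q$ is almost malnormal in $\G_k$, so \cite[Lemma~2.3,~2.5]{CDK19} apply and give $\mathcal A_1\vee\mathcal A_2\prec_{\emm}\El(\widehat N_k\rtimes Q)$, where $\widehat N_k\rtimes Q=\G\cap(\widehat\G_k\times Q_k)$.

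To finish from alternative (2) I would iterate, following the scheme in the proof of Theorem~\ref{parabolicQ}. Note $\widehat N_k\rtimes Q=\times_Q\G_j$ ($j\neq k$) is again a fiber product of relatively hyperbolic factors, now with $m-1$ of them, and the intertwining furnishes commuting property (T) subfactors $\psi(\mathcal A_1 p),\psi(\mathcal A_2 p)$ of a corner of $\El(\widehat N_k\rtimes Q)\subseteq\El(\prod_{j\neq k}\G_j)$. Applying Theorem~\ref{controlprodpropt2} to this smaller product and composing the resulting intertwining back through $\psi$ (transitivity of $\prec$), I either land in some $\El(\widehat N_T)$ with $k\in T$—whence $\mathcal A_i\prec_\emm\El(\widehat N_T)\subseteq\El(\widehat N_k)$ gives conclusion (i)—or I shrink to $\mathcal A_1\vee\mathcal A_2\prec_\emm\El(\widehat N_T\rtimes Q)$ with $|T|$ strictly larger. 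After at most $m-1$ such steps the core reduces to a single factor $\G_{j_0}=N_{j_0}\rtimes Q$, and a final application of Theorem~\ref{controlprodpropt1}/\ref{controlprodpropt2} in the case $n=1$ forces either $\mathcal A_i\prec_\emm\mathbb C1$—impossible since a property (T) $\mathrm{II}_1$ factor is diffuse—or $\mathcal A_1\vee\mathcal A_2\prec_\emm\El(Q)$, which is conclusion (ii).

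The main obstacle I anticipate is the bookkeeping in the iterated case~(2): each step produces only a corner embedding, so I must repeatedly pass to $\psi(\mathcal A_i p)$, verify that these remain commuting property (T) subfactors to which Theorem~\ref{controlprodpropt2} applies, transfer from the product algebra $\El(\prod_{j\notin T}\G_j)$ back to $\El(\widehat N_T\rtimes Q)$ (again invoking normality of the $N_j$'s and almost malnormality of $Q$ via \cite[Lemma~2.3--2.5]{CDK19}), and compose the intertwiners so that the conclusion is genuinely about $\mathcal A_i$ inside $\emm$ rather than about an auxiliary corner. Keeping $k\in T$ throughout, so that every ``case~(1)'' exit yields $\El(\widehat N_T)\subseteq\El(\widehat N_k)$ and hence the $k$-specific conclusion~(i), is the delicate point; the termination via the $n=1$ relative-hyperbolicity input, together with diffuseness of property (T) factors, is what rules out the degenerate atomic alternative and pins the remaining case to $\El(Q)$.
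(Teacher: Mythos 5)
Your proposal is correct and follows essentially the same route as the paper: apply the relative--hyperbolicity dichotomy of Theorem~\ref{controlprodpropt2} to the ambient product $\G_1\times\cdots\times\G_m$, pull the resulting intertwinings back into $\emm$ via the normality of the $N_j$'s and the (almost) malnormality of $Q$ using \cite[Lemma~2.3--2.5]{CDK19}, and iterate on the shrinking core $\widehat N_T\rtimes Q$ until the $n=1$ case forces either conclusion (i) or $\mathcal A_1\vee\mathcal A_2\prec_\emm\El(Q)$. The paper merely compresses the iteration by delegating it to the arguments of Theorem~\ref{commutationcontrolincommultiplication} and \cite[Theorem~4.2]{CDHK20}, so no substantive difference remains.
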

\begin{proof} 
	\begin{claim}
		We claim that one of the following holds: 
		\begin{enumerate}
			\item[1.] There exists $i\in\overline{1,2}$ such that $\mathcal A_i\prec_{ \emm}\El(\widehat N_k)$;
			\item[2.] $\mathcal A_1\vee\mathcal A_2\prec_{ \emm} \El(\widehat N_k)\rtimes Q.$ 
		\end{enumerate} 
	\end{claim}
	\noindent \textit{Proof of Claim.} Let $\widehat G_k=\widehat{N}_k\rtimes Q$ and $G_k=N_k\rtimes Q$ for all $k\in\overline{1,m}$. Notice that $N\rtimes Q\leq G_k\times \widehat G_k$ where $Q$ embedded as $diag(Q)\leq Q\times Q$. Notice that $\mathcal A_1,\mathcal A_2\subseteq \El(G_k\times \widehat G_k)=:\tilde \emm$. Then by \cite[Theorem~5.3]{CDK19} there exists $i\in\overline{1,2}$ such that
	\begin{enumerate}
		\item[a)] $\mathcal A_i\prec_{\tilde \emm} \El(\widehat G_k)$, or
		\item[b)] $\mathcal A_1\vee\mathcal A_2\prec_{\tilde \emm}\El(\widehat G_k\times Q)$. 
	\end{enumerate}
	Assume $a)$. Since $\mathcal A_1\vee\mathcal A_2\subseteq \El(N)\rtimes Q$, using \cite[Lemma~2.3]{CDK19} we further get that $\mathcal A_i\prec_{\tilde \emm}\El(G\cap h\widehat G_k h^{-1})=\El((N\rtimes Q)\cap (\widehat N_k\rtimes Q))=\El(\widehat N_k)$. Thus we have that $c)$ $\mathcal A_i\prec_{\tilde \emm}\El(\widehat N_k)$.
	
	Assume $b)$. Then $\mathcal A_1\vee\mathcal A_2\prec_{\tilde \emm}\El(G\cap h(\widehat G_k\times Q)h^{-1})=\El(h(\widehat N_k\rtimes diag(Q))h^{-1}).$ This then implies that $d)$ $\mathcal A_1\vee\mathcal A_2\prec_{\tilde \emm} \El(\widehat N_k)\rtimes Q$. Observe that \cite[Lemma~2.5]{CDK19} together with this case $d)$ implies case $2.$ in the statement of the theorem. 
	
	Proceeding exactly as in \cite[Theorem~4.1]{CDHK20} we can see that case $c)$ implies $1.$ in the theorem. This proves the claim. $\hfill\blacksquare$
	
	In order to prove the theorem, we need to show is that $\mathcal A_1\vee\mathcal A_2\prec_{ \emm}\El(\widehat N_k)\rtimes Q$ implies $\mathcal A_1\vee\mathcal A_2\prec_{ \emm}\El(Q)$. Since $\Aa_1 \vee \Aa_2 \prec_{\emm} \El(\widehat N_k)\rtimes Q $, there exists \begin{equation} \label{eq1}
	\psi: p(\Aa_1 \vee \Aa_2)p \rightarrow \psi( p(\Aa_1 \vee \Aa_2)p ) = \mr \subseteq q (\El(\widehat N_k) \rtimes Q)q
	\end{equation}
	$\ast$-homomorphism, nonzero partial isometry $v \in q \emm p$ such that \begin{align}\label{bal2}\psi(x)v =vx \text{ for all }x \in p(\Aa_1 \vee \Aa_2)p.\end{align}
	Notice that we can pick $v$ such that the support projection satisfies  $s(E_{\El(\widehat N_k\rtimes Q)}(vv^*))=q$. Moreover, since $\Aa_i$'s are factors we can assume that $p=p_1p_2$ for some $p_i\in \mathscr P(\Aa_i)$. 
	
	Next let $\mr_i= \psi(p_i \Aa_i p_i)$. Note that $\mr_1$, $\mr_2$ are commuting property (T) subfactors such that $\mr_1 \vee \mr_2= \mr \subseteq q (\El(\widehat N_k) \rtimes Q)q$. Proceeding similarly as in the proof of the last part of Theorem \ref{commutationcontrolincommultiplication} and \cite[Theorem~4.2]{CDHK20} we can conclude that $\mathcal A_1\vee\mathcal A_2\prec_{ \emm}\El(Q)$.  
	
\end{proof}

The following theorem is immediate from Theorem \ref{controlcommutingpropTsubalgebratrivialf_group} and \cite[Theorem~4.3]{CDHK20}. We include the details for readers' convenience.

\begin{theorem} \label{intertwiningthm}
	Let $\Aa_1,\Aa_2\subseteq \El(N)\rtimes Q=\emm$ be two commuting, property (T), type $\rm II_1$ factors such that $(\Aa_1\vee\Aa_2)'\cap r(\El(N)\rtimes Q)r=\mathbb{C}r$. Then one of the following holds:
	\begin{enumerate}
		\item[a)] $\Aa_1\vee\Aa_2\prec^s_{\emm}\El(N)$, or
		\item[b)] $\Aa_1\vee\Aa_2\prec^s_{\emm}\El(Q)$.
	\end{enumerate}
\end{theorem}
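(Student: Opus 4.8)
The plan is to take Theorem~\ref{controlcommutingpropTsubalgebratrivialf_group} as the only nontrivial input and then follow the scheme of \cite[Theorem~4.3]{CDHK20}, using the irreducibility hypothesis $(\Aa_1\vee\Aa_2)'\cap r\emm r=\mathbb C r$ to pass freely between $\prec_\emm$ and $\prec^s_\emm$. The first observation I would record is that this hypothesis trivializes the relevant centers: if $z$ is a projection in $\mathcal Z(\Aa_1'\cap r\emm r)$, then $z$ commutes with $\Aa_2\subseteq\Aa_1'\cap r\emm r$, so $z\in\Aa_1'\cap\Aa_2'\cap r\emm r=(\Aa_1\vee\Aa_2)'\cap r\emm r=\mathbb C r$. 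Hence $\mathcal Z(\Aa_1'\cap r\emm r)=\mathbb C r$, and symmetrically for $\Aa_2$ and for $\Aa_1\vee\Aa_2$. Consequently every intertwining $\Aa_i\prec_\emm\mathcal Q$ automatically upgrades to $\Aa_i\prec^s_\emm\mathcal Q$, and likewise for the join; in particular it suffices to produce an \emph{ordinary} intertwining of $\Aa_1\vee\Aa_2$ into $\El(N)$ or into $\El(Q)$.

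I would then apply Theorem~\ref{controlcommutingpropTsubalgebratrivialf_group} for every $k\in\overline{1,m}$. If case (ii) holds for even one $k$, then $\Aa_1\vee\Aa_2\prec_\emm\El(Q)$, and the upgrade above lands us in alternative (b). Otherwise case (i) holds for all $k$, so for each $k$ there is $i(k)\in\overline{1,2}$ with $\Aa_{i(k)}\prec_\emm\El(\widehat N_k)$, which upgrades to $\Aa_{i(k)}\prec^s_\emm\El(\widehat N_k)$.

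The heart of the argument is the combinatorial assembly of these single-factor intertwinings. Writing $S_\ell=\{k\in\overline{1,m}:\ i(k)=\ell\}$ for $\ell=1,2$, we have $S_1\cup S_2=\overline{1,m}$ and $\Aa_\ell\prec^s_\emm\El(\widehat N_k)$ for every $k\in S_\ell$. Since $\bigcap_{k\in S_\ell}\widehat N_k=\widehat N_{S_\ell}$, iterating \cite[Lemma~2.8(2)]{DHI16} over these subgroups gives $\Aa_\ell\prec^s_\emm\El(\widehat N_{S_\ell})$. As $\Aa_\ell$ is a diffuse $\rm II_1$ factor it cannot intertwine into $\mathbb C$, so $\widehat N_{S_\ell}$ is nontrivial, i.e.\ $S_\ell\neq\overline{1,m}$. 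Because $S_1\cup S_2=\overline{1,m}$, the blocks $\widehat N_{S_1}$ and $\widehat N_{S_2}$ are supported on the disjoint coordinate sets $\overline{1,m}\setminus S_1$ and $\overline{1,m}\setminus S_2$, so $\El(\widehat N_{S_1})$ and $\El(\widehat N_{S_2})$ are commuting subalgebras of $\El(N)$. Now $\Aa_1,\Aa_2$ are commuting subalgebras intertwining into these two commuting targets respectively; combining the two intertwinings exactly as in \cite[Lemma~2.6]{Is16} together with the method of \cite[Claim~4.5]{CU18} yields $\Aa_1\vee\Aa_2\prec_\emm\El(\widehat N_{S_1})\bten\El(\widehat N_{S_2})\subseteq\El(N)$, and the upgrade then gives $\Aa_1\vee\Aa_2\prec^s_\emm\El(N)$, which is alternative (a).

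The step I expect to cost the most is the final combination of the two commuting intertwinings into a single intertwining of the join: one has to arrange the partial isometries implementing $\Aa_1\prec^s_\emm\El(\widehat N_{S_1})$ and $\Aa_2\prec^s_\emm\El(\widehat N_{S_2})$ on a common corner so that the tensor-type argument of \cite[Claim~4.5]{CU18} goes through, and to verify that no degeneracy collapses the target. By contrast, the automatic $\prec\Rightarrow\prec^s$ upgrade and the intersection of targets via \cite[Lemma~2.8(2)]{DHI16} are immediate once the center computation of the first paragraph is in place.
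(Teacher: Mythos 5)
Your proposal is correct and follows the same overall strategy as the paper: feed Theorem~\ref{controlcommutingpropTsubalgebratrivialf_group} into the dichotomy, use the hypothesis $(\Aa_1\vee\Aa_2)'\cap r\emm r=\mathbb C r$ to upgrade $\prec_{\emm}$ to $\prec^s_{\emm}$, rule out the degenerate case where a single $\Aa_i$ absorbs all the coordinate intertwinings, and then combine the two resulting intertwinings into $\El(N)$. Your upgrade is phrased via the direct computation $\mathcal Z(\Aa_i'\cap r\emm r)=\mathbb C r$, whereas the paper routes it through \cite[Lemma~2.4]{DHI16} and the inclusion $\mathscr N_{r\emm r}(\Aa_{i_k})'\cap r\emm r\subseteq(\Aa_1\vee\Aa_2)'\cap r\emm r$; these are interchangeable. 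The one place where your argument genuinely improves on the written proof is the combinatorial assembly. The paper runs the dichotomy only for $k\in\{1,2\}$ and derives the contradiction from $\Aa_i\lessdot_{\emm}\El(\widehat N_{\{1,2\}})$, which forces amenability of $\Aa_i$ only when $m=2$ (for $m>2$ the group $\widehat N_{\{1,2\}}=N_3\times\cdots\times N_m$ is nonamenable, so property (T) is not contradicted); it then combines $\Aa_{i_1}\prec^s_{\emm}\El(\widehat N_1)$ and $\Aa_{i_2}\prec^s_{\emm}\El(\widehat N_2)$ via \cite[Corollary~1]{CDHK20}, two targets that do not commute when $m>2$. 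Your version --- partitioning $\overline{1,m}$ into $S_1,S_2$, intersecting the targets within each block via \cite[Lemma~2.8(2)]{DHI16} to get $\Aa_\ell\prec^s_{\emm}\El(\widehat N_{S_\ell})$, excluding $S_\ell=\overline{1,m}$ by diffuseness, and then combining two intertwinings into \emph{commuting} blocks of $\El(N)$ --- is exactly the iteration over all $m$ coordinates that the general statement requires, and it places the final combination step in the setting where the standard join lemma applies cleanly. You correctly identify that this last combination of commuting intertwinings is the only step with real content; everything else follows formally from the irreducibility hypothesis.
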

\begin{proof}
	Fix $k\in\{1,2\}$. By Theorem~\ref{controlcommutingpropTsubalgebratrivialf_group} we get that either 
	\begin{enumerate}
		\item[i)] $i_k\in\{1,2\}$ such that $\Aa_{i_k}\prec_{\emm} \El(\widehat N_k)$, or
		\item[ii)] $\Aa_1\vee\Aa_2\prec_{\emm} \El(Q)$.
	\end{enumerate}
	Note that case ii) together with the assumption $(\Aa_1\vee\Aa_2)'\cap r(\El(N)\rtimes Q)r=\mathbb{C}r$ and \cite[Lemma 2.4]{DHI16} already give $b)$. So assume that case i) holds. Hence for all $k\in\{1,2\}$, there exists $i_k\in\{1,2\}$ such that $\Aa_{i_k}\prec_{\emm}\El(\widehat N_k)$. Using \cite[Lemma~2.4]{DHI16}, there exists $0\neq z\in\mathcal Z(\enn_{r\emm r}(\Aa_{i_k})'\cap r\emm r)$ such that $\Aa_{i_k}z \prec^s_{\emm}\El(\widehat N_k)$. Since $\Aa_1\vee\Aa_2\subseteq \enn_{r\emm r}(\Aa_{i_k})''$, then $\enn_{r\emm r}(\Aa_{i_k})'\cap r\emm r\subseteq (\Aa_1\vee\Aa_2)'\cap r\emm r=\mathbb{C}r$. Thus we get that $z=r$. In particular 
	\begin{align}\label{1}
	\Aa_{i_k}\prec^s_{\emm} \El(\widehat N_k).  
	\end{align}
	We now briefly argue that $k\neq l\Rightarrow i_k\neq i_l$. Assume by contradiction that $i_1=i_2=i$. Then for $k=1$ we have that $\Aa_i\prec^s_{\emm}\El(\widehat N_1)$ and $\Aa_i\prec^s_{\emm}\El(\widehat N_2)$. By \cite[Lemma 2.6]{DHI16}, this implies that $\Aa_i\lessdot_{\emm} \El(\widehat N_1)$ and $\Aa_i\lessdot_{\emm} \El(\widehat N_2)$. Note that $\El(N_i)$ are regular in $\emm$ and hence by \cite[Proposition 2.7]{PV11} we get that $\Aa_i\lessdot_{\emm} \El(\widehat N_1)\cap\El(\widehat N_2)=\El(\widehat N_{\{1,2\}})$, which implies that $\Aa_i$ is amenable. This contradicts our assumption that $\Aa_i$ has property $(T)$. Thus $i_k\neq i_l$ whenever $k\neq l$. Therefore we have that $\Aa_{i_1}\prec_{\emm}^s\El(N_1)\subseteq \El(N)$ and $\Aa_{i_2}\prec_{\emm}^s\El(N_2)\subseteq \El(N)$. Using \cite[Corollary~1]{CDHK20} we get that $\Aa_1\vee\Aa_2\prec^s_{\emm}\El(N)$, which completes the proof.  
\end{proof}

Next we show that we can identify ``core'' algebras and the algebras associated to the acting groups. The proof heavily relies on Theorem \ref{controlcommutingpropTsubalgebratrivialf_group} and \cite[Theorem~4.4,4.5]{CDHK20}.

\begin{theorem} \label{core-actinggp} 
	Let $N\rtimes Q, M\rtimes P \in \mathscr S_{m,n}$. Let $p\in \El(M\rtimes P)$ be a projection and assume that  $\Theta: \El(N \rtimes Q) \rightarrow p\El(M \rtimes P)p$ is a $\ast$-isomorphism.  Then the following hold
	
	\begin{enumerate}
		\item There exists  $v \in \mathscr U (p\El(M \rtimes P)p)$ such that $\Theta(\El(N))= vp\El(M)pv^{\ast}$, and 
		\item There exists $u \in \mathscr U (\El(M \rtimes P))$ such that $\Theta(\El(Q))=pu^{\ast} \El(P)up$.
	\end{enumerate}
	
\end{theorem}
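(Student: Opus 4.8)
The plan is to treat the two assertions in tandem, reducing each to the location theorem for pairs of commuting property (T) factors (Theorem~\ref{intertwiningthm}, whose hypotheses are verified through Theorem~\ref{controlcommutingpropTsubalgebratrivialf_group}) applied to the target group $M\rtimes P\in\mathscr S_{m,n}$, followed by an upgrade of the resulting spectral intertwining to a genuine unitary conjugacy exactly as in \cite[Theorems~4.4,4.5]{CDHK20}. Throughout, write $\widehat N_1=N_2\times\cdots\times N_m$ and $\widehat Q_1=Q_2\times\cdots\times Q_n$; since each Rips factor $N_i$ and each $Q_i$ has property (T), the algebras $\El(N_i)$, $\El(\widehat N_1)$, $\El(Q_i)$, $\El(\widehat Q_1)$ are property (T) factors. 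I also record that $\El(N)'\cap\El(N\rtimes Q)=\mathbb C$ (because $N$ is icc and $Q\hookrightarrow\mathrm{Out}(N)$) and $\El(Q)'\cap\El(N\rtimes Q)=\mathbb C$ (because $Q<N\rtimes Q$ is almost malnormal and icc); applying $\Theta$, these relative commutants remain trivial inside $p\El(M\rtimes P)p$.

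For part (1), I would set $\Bee_1=\Theta(\El(N_1))$ and $\Bee_2=\Theta(\El(\widehat N_1))$. These are commuting property (T) factors with $(\Bee_1\vee\Bee_2)'\cap p\El(M\rtimes P)p=\mathbb C p$ and $\Bee_1\vee\Bee_2=\Theta(\El(N))$. Viewing them inside $\El(M\rtimes P)$ and invoking Theorem~\ref{intertwiningthm} for $M\rtimes P$, I obtain that either $\Theta(\El(N))\prec^s_{\El(M\rtimes P)}\El(M)$ or $\Theta(\El(N))\prec^s_{\El(M\rtimes P)}\El(P)$. The second option is to be ruled out: $\El(N)$ is regular in $\El(N\rtimes Q)$, so $\Theta(\El(N))$ is regular in the corner; since $P<M\rtimes P$ is almost malnormal, the regularity of $\Theta(\El(N))$ together with Lemma~\ref{malnormalcontrol} forces its whole normalizer, hence all of $p\El(M\rtimes P)p$, to intertwine into $\El(P)$, contradicting that $M$ is infinite. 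Thus $\Theta(\El(N))\prec^s_{\El(M\rtimes P)}\El(M)$, and since both $\Theta(\El(N))$ and $\El(M)$ are regular cores, the spectral intertwining upgrades to a unitary $v\in\mathscr U(p\El(M\rtimes P)p)$ with $\Theta(\El(N))=vp\El(M)pv^{\ast}$, exactly as in \cite[Theorem~4.4]{CDHK20}.

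For part (2), I would run the same scheme with $\Bee_1=\Theta(\El(Q_1))$ and $\Bee_2=\Theta(\El(\widehat Q_1))$, whose join is $\Theta(\El(Q))$ and which again have trivial relative commutant. Theorem~\ref{intertwiningthm} gives $\Theta(\El(Q))\prec^s_{\El(M\rtimes P)}\El(M)$ or $\Theta(\El(Q))\prec^s_{\El(M\rtimes P)}\El(P)$. Here the option $\El(M)$ is excluded using part (1): if $\Theta(\El(Q))\prec\El(M)$, then since $p\El(M)p=v^{\ast}\Theta(\El(N))v$, transporting this intertwining back through $\Theta^{-1}$ would yield $\El(Q)\prec_{\El(N\rtimes Q)}\El(N)$, which is impossible because $Q\cap gNg^{-1}=Q\cap N=\{e\}$ for every $g$ while $Q$ is infinite. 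Hence $\Theta(\El(Q))\prec^s_{\El(M\rtimes P)}\El(P)$, and using the almost malnormality (one-sided quasinormalizer rigidity) of both $Q<N\rtimes Q$ and $P<M\rtimes P$ this is promoted to a unitary $u\in\mathscr U(\El(M\rtimes P))$ with $\Theta(\El(Q))=pu^{\ast}\El(P)up$, as in \cite[Theorem~4.5]{CDHK20}.

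The main obstacle I anticipate is the final upgrade from $\prec^s$ to honest unitary conjugacy while keeping track of the corner projection $p$ (equivalently, of the amplification $\tau(p)$): the spectral intertwining by itself only produces a partial isometry between corners, so one must exploit the regularity of the cores (for part (1)) and the almost malnormality of the acting subgroups (for part (2)) to see that the intertwiner may be chosen unitary and that the matched corners are full. Verifying the triviality of the relevant relative commutants so that Theorem~\ref{intertwiningthm} applies, and correctly transporting the ``wrong'' intertwining back through $\Theta^{-1}$ to reach the contradiction, are the remaining points demanding care.
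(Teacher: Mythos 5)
Your proposal follows essentially the same route as the paper: locate $\Theta(\El(N))$ and $\Theta(\El(Q))$ using the commuting property (T) intertwining results (the paper runs Theorem~\ref{controlcommutingpropTsubalgebratrivialf_group} with the index-chasing over $k\in\overline{1,m}$ for part (1), which is exactly what your invocation of Theorem~\ref{intertwiningthm} packages, and uses Theorem~\ref{intertwiningthm} verbatim for part (2)), rule out the wrong alternative via the quasinormalizer/malnormality argument for $\El(P)$ and via incompatibility with part (1) for $\El(M)$, and then upgrade to unitary conjugacy exactly as in \cite[Theorems~4.4, 4.5]{CDHK20}. Your exclusion of $\Theta(\El(Q))\prec\El(M)$ by transporting through $\Theta^{-1}$ to get $\El(Q)\prec\El(N)$ is a cosmetic variant of the paper's appeal to \cite[Theorem~2.3]{CDHK20} (both reduce to $[Q:Q\cap N]=\infty$), so the two proofs are essentially identical.
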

\begin{proof}
	From the definition of class $\mathscr{S}_{m,n}$ we have $Q=Q_1\times Q_2\times \cdots\times Q_n$, $P=P_1\times P_2\times\cdots\times P_n$, $N=N_1\times N_2\times \cdots\times N_m$ and $M=M_1\times M_2\times\cdots\times M_m$, where $Q_i$'s and $P_i$'s are nontrivial,  icc, torsion free, biexact, weakly amenable, residually finite, property (T) groups for all $i\in\overline{1,n}$ and $N_j$'s and $M_j$'s have property (T) for all $j\in\overline{1,m}$.  Denote $\emm= \mathcal L(M\rtimes P),\Bee=\Theta(\El(Q))$, $\Aa=\Theta(\El(N))$, $\Aa^i_1=\Theta(\El(N_i))$, $\Aa^i_2=\Theta(\El(\widehat N_i))$, $\Bee^i_1=\Theta(\El(Q_i))$ and $\Bee^i_2=\Theta(\El(\widehat Q_i))$. Then we see that  $\Aa_1^i, \Aa^i_2\subset p\emm p$ are commuting property (T) subalgebras such that $\Aa_1\vee \Aa_2=\Aa$. Using the Theorem \ref{controlcommutingpropTsubalgebratrivialf_group}, for every $i,k\in\overline{1,m}$ and we have that either
	\begin{enumerate}
		\item [a)]$\Aa^i_j \prec^s_{\emm} \El(\widehat M_k)$ or,
		
		\item [b)] $\Aa \prec^s_{\emm} \El(P)$
	\end{enumerate}
	for some $j\in\{ 1,2\}$.
	Assume case b) above holds. 
	Then there exists projections $r \in \Aa$, $q \in \El(P)$, a nonzero partial isometry $v \in q \emm r$, and a $\ast$-homomorphism $\psi:r\Aa r \rightarrow \psi( r \Aa r) \subseteq q \El(P)q$ such that $\psi(x)v=vx$ for all $x \in r \Aa r$. 
	Arguing exactly as in the proof of \cite[Theorem 5.5]{CDK19}, we can show that $v \mathcal{QN}_{r \emm r}(r \Aa r)''v^{\ast} \subseteq  q\El(P) q$. 
	
	Now, $\mathcal{QN}_{r \emm r}(r \Aa r)''= r \emm r$, using \cite[Lemma 3.5]{Po03}. Thus, $\emm \prec_{\emm} \El(P)$ and hence $\El(P)$ has finite index in $\emm$ by \cite[Theorem 2.3]{CD18}, which is a contradiction. Hence we must have a).
	
	Assuming case $a)$ we can see that $\Aa^i_j \prec^s_{\emm} \El(\widehat M_k)$ for some $j\in\{1,2\}$. Now if $j=1$ for all $i$ for some $k\in\overline{1,m}$, then we have $\Aa^i_2 \prec^s_{\emm} \El(\widehat M_k)$ for every $i$. By \cite[Corollary~1]{CDHK20} we get that $\Aa=\vee_i\Aa^i_j \prec^s_{\emm} \El(\widehat M_k)$. Since $\El(\widehat M_k)\subseteq \El(M)$, we have that $\Aa\prec_{ \emm}\El(M)$. Since $N\triangleleft N\rtimes Q$, we have $\Aa\prec^s_{ \emm}p\El(M)p$. Now if for every $k\in\overline{1,m}$ there is some $i$ such that $\Aa^i_2\prec_{ \emm}^s\El(\widehat M_k)$, denote $\Aa^i_2=:\Aa_{i_k}$. We now claim that for some $k\neq s$ we have $i_k\neq i_s$. Assume by contradiction that $i_k=i_s$ for all $k,s\in\overline{1,m}$. Then we have $\Aa_{i_k}\prec^s_{ \emm}\El(\widehat M_k)$ for every $k\in\overline{1,m}$. Hence by \cite[Lemma~2.6]{DHI16} we get that $\Aa_{i_k}\prec_{ \emm}^s\El(\underset{k\in\overline{1,m}}{\cap}\widehat M_k)=\mathbb{C}$, which is a contradiction. So we get that for some $k\neq s$ we have $i_k\neq i_s$. Using the fact that $\El(N_i)\subseteq \El(N)=\Aa_{i_k}\cup\Aa_{i_s}$, $\Aa_{i_j}\prec^s_{ \emm}\El(\widehat M_j)$ for $j=\{k,s\}$ together with the fact that $N_i\triangleleft N\rtimes Q$ we get that for all $i\in\overline{1,m}$, we have $\El(N_i)=\Aa^i_1\prec_{ \emm}^sp\El(M)p$. This together with \cite[Corollary~1]{CDHK20} gives us $\Aa\prec_{ \emm}^sp\El(M)p$. 
	
	Now we can argue similar to the proof of \cite[Theorem~4.4]{CDHK20} and conclude that there exists a unitary $v\in\mathcal{U}(p\El(M\rtimes P)p)$ such that $\Theta(\El(N))=vp\El(M)pv^{\ast}$. This proves $1.$ of the theorem. 
	
	We now see that  $\Bee_1, \Bee_2\subset p\emm p$ are commuting property (T) subalgebras such that $\Bee_1\vee \Bee_2=\Bee$. Moreover, we have that $\{\Bee_1\vee \Bee_2\}' \cap p\emm p=\Bee' \cap \Theta(\El(N \rtimes Q)) =\mathbb C \Theta (1)=\mathbb C p$.  Hence by Theorem~\ref{intertwiningthm}, we either have that a) $\Bee \prec^s_{\emm} \El(M)$, or b) $\Bee \prec^s_{\emm} \El(P)$. By part 1. we also know that $\Aa \prec_{\emm}^s \El(M)$. Thus, if a) holds, then \cite[Theorem~2.3]{CDHK20} implies that $p\emm p=\Theta(\El(N\rtimes Q)) \prec_{\emm} \El(M)$. In turn this implies that $Q$ is finite, a contradiction. Hence b) must hold, i.e.\ $\Bee \prec^s_{\emm} \El(P)$. Now we can argue similar to the proof of \cite[Theorem~4.5]{CDHK20} and conclude that there exists a unitary $u\in\mathcal{U}(p\El(M\rtimes P)p)$ such that $\Theta(\El(N))=up\El(P)pu^{\ast}$. This proves $2.$ of the theorem. 
\end{proof}

Next we are ready to derive our main theorem for this section. 

\begin{theorem} \label{mainthm}
	Let $N\rtimes Q, M\rtimes P \in \mathscr S_{m,n}$ with $N=N_1\times N_2\times\cdots\times N_m$ and $M=M_1\times M_2\times\cdots\times M_m$. Let $p\in \El(M\rtimes P)$ be a projection and assume that  $\Theta: \El(N \rtimes Q) \rightarrow p\El(M \rtimes P)p$ is a $\ast$-isomorphism. Then $p=1$ and one can find  $\ast$-isomorphisms, $\Theta_i: \El(N_i) \rightarrow \El(M_i)$, a group isomorphism $\delta: Q \rightarrow P$, a multiplicative character $\eta: Q \rightarrow \mathbb T$, and a unitary $u \in \mathscr U( \El(M \rtimes P))$ such that  for all $ g \in Q$, $x_i \in N_i$ we have that$$ \Theta((x_1 \otimes x_2\otimes \cdots\otimes x_m)u_{g})= \eta(g)u(\Theta_1(x_1) \otimes \Theta_2(x_2)\otimes\cdots\otimes\Theta(x_m))v_{\delta(g)})u^{\ast}. $$
\end{theorem}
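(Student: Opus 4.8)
The plan is to assemble the final W$^*$-superrigidity statement from the structural results already established, then upgrade the abstract intertwinings into a concrete groupoid-level isomorphism. I would begin by invoking Theorem~\ref{core-actinggp}: it supplies a unitary $v\in\mathscr U(p\emm p)$ with $\Theta(\El(N))=vp\El(M)pv^{\ast}$ and a unitary $u\in\mathscr U(p\emm p)$ with $\Theta(\El(Q))=pu^{\ast}\El(P)up$. After replacing $\Theta$ by $\mathrm{Ad}(v^{\ast})\circ\Theta$ I may assume $\Theta(\El(N))=p\El(M)p$ outright; the relative commutants then force $p=1$, since $\El(N)\subseteq\El(N\rtimes Q)$ has trivial relative commutant and $\El(M)$ is a factor, so the compression $p\El(M)p$ can only be all of $\El(M)$ when $p=1$. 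This resolves the amplification parameter immediately and reduces everything to honest $\ast$-isomorphisms.

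Next I would reconstruct the acting group. Having $\Theta(\El(Q))=u^{\ast}\El(P)u$ for a unitary $u$, the key input is a \emph{height} argument in the spirit of \cite{IPV10}. I would form the comultiplication $\Delta$ along $\La=M\rtimes P$ and use Theorem~\ref{commutationcontrolincommultiplication2}(ii) together with Theorem~\ref{semidirectprodreconstruction}'s group-theoretic reconstruction to show $h_{P}(u\Theta(\El(Q))u^{\ast})>0$. Once the height is positive, \cite[Theorem~3.1]{IPV10} converts the unitary conjugacy of the \emph{algebras} $\El(Q)$ and $\El(P)$ into a conjugacy of the \emph{groups} up to characters: there is a unitary $u_0$, a group isomorphism $\delta:Q\to P$, and a character $\eta:Q\to\mathbb T$ with $\Theta(u_g)=\eta(g)\,u_0 v_{\delta(g)}u_0^{\ast}$ for all $g\in Q$. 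I would then arrange $u_0=u$ by absorbing the discrepancy into the partial isometries.

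The remaining task is to split the core isomorphism $\Theta_0:\El(N)\to\El(M)$ into the tensor-factor isomorphisms $\Theta_i:\El(N_i)\to\El(M_i)$ compatibly with $\delta$. Here I would exploit the product rigidity of the core: the subalgebras $\El(N_i)$ are precisely the minimal direct factors of $\El(N)$ in the sense detected by Theorem~\ref{controlcommutingpropTsubalgebratrivialf_group} (applied with the $\Aa_j^i=\Theta(\El(N_i))$), so $\Theta_0$ must permute the factors $\El(M_k)$ up to unitaries. After composing with a unitary that corrects the permutation, I obtain $\ast$-isomorphisms $\Theta_i:\El(N_i)\to\El(M_i)$ with $\Theta_0=\Theta_1\otimes\cdots\otimes\Theta_m$. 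The compatibility of the conjugation by $v_{\delta(g)}$ with the diagonal action $\sigma$ forces the permutation to intertwine the actions, yielding the displayed formula
$$\Theta((x_1\otimes\cdots\otimes x_m)u_g)=\eta(g)\,u(\Theta_1(x_1)\otimes\cdots\otimes\Theta_m(x_m))v_{\delta(g)})u^{\ast}.$$

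The \textbf{main obstacle} I anticipate is the height estimate: establishing $h_P(u\Theta(\El(Q))u^{\ast})>0$ requires carefully transporting the comultiplication-level intertwinings of Theorems~\ref{commutationcontrolincommultiplication}--\ref{commutationcontrolincommultiplication2} through the product-rigidity decomposition so that the reconstructed parabolic subgroup $\Phi$ from Theorem~\ref{parabolicQ} genuinely matches $Q$ with positive height, rather than merely up to finite index or unitary conjugacy. Once this is in place, the application of \cite[Theorem~3.1]{IPV10} and the factor-matching are comparatively routine, following the template of \cite[Theorem~5.1]{CDK19} with the tensor length $m$ handled by the iterated intertwining patching lemma \cite[Corollary~1]{CDHK20}.
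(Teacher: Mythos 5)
There is a genuine gap at the single most important step: your claim that once $\Theta(\El(N))=p\El(M)p$ ``the relative commutants then force $p=1$'' is false. An isomorphism of $\El(N)$ onto the corner $p\El(M)p=\El(M)^{\tau(p)}$ is perfectly consistent with $\tau(p)\neq 1$ unless one already knows something like triviality of the fundamental group of $\El(M)$ --- which is precisely what this theorem is designed to prove (Corollary~\ref{trivialfundamentalgroup_S_m,n} is deduced from it). Trivial relative commutant is preserved under compression by a projection, so comparing $\Theta(\El(N))'\cap p\emm p=\mathbb Cp$ with $(p\El(M)p)'\cap p\emm p=\mathbb Cp$ gives no information about $\tau(p)$. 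In the paper, $p=1$ is the \emph{output} of the hard part of the argument, not a preliminary reduction: one sets $\G=u^*Pu$ and $\mathcal G=\{\Theta(u_g):g\in Q\}$, shows $h_\G(\mathcal G)>0$ via \cite[Theorem~2.5]{CDHK20} (using that $\mathcal G$ normalizes $\Theta(\El(N))=p\El(M)p$), shows $\mathcal G''\nprec\El(C_\G(g))$ for $g\neq e$ (using biexactness of the factors of $P$ and solidity from \cite{OP07}), establishes weak mixing of $\{{\rm Ad}(v)\}_{v\in\mathcal G}$ on $L^2(p\El(\G)p\ominus\mathbb Cp)$ via \cite[Proposition~3.4]{CSU13}, and only then invokes the Krogager--Vaes-type criterion \cite[Theorem~2.4]{CDHK20} to conclude $p=1$. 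Your proposal does contain a height argument, but it is deployed after $p=1$ has (incorrectly) been assumed, and only to feed \cite[Theorem~3.1]{IPV10} for the group isomorphism $\delta$; as written the argument never actually rules out a nontrivial amplification.

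Two secondary remarks. First, you propose to obtain positivity of the height from the comultiplication along $\La=M\rtimes P$ and Theorems~\ref{commutationcontrolincommultiplication}--\ref{commutationcontrolincommultiplication2}; that machinery is what the paper uses for Theorem~\ref{semidirectprodreconstruction}, where $\La$ is an unknown group, whereas here both groups lie in $\mathscr S_{m,n}$ and the paper gets the height directly from \cite[Theorem~2.5]{CDHK20} applied to the action $\sigma:P\rightarrow{\rm Aut}(M)$ --- a shorter route you could adopt. Second, your outline for splitting the core isomorphism into the tensor factors $\Theta_i:\El(N_i)\rightarrow\El(M_i)$ and matching them with the diagonal action is consistent with the paper, which delegates this to the argument of \cite[Theorem~4.6]{CDHK20} together with the patching result \cite[Corollary~1]{CDHK20}; that part of your plan is fine once the $p=1$ issue is repaired.
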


\begin{proof}
	Throughout this proof we will denote by $\emm=\El(N\rtimes Q)$. Using Theorem~\ref{core-actinggp}, and replacing $\Theta$ by $\Theta \circ Ad(v)$ if necessary, we may assume that $\Theta(\El(N))= p\El(M)p$. By Theorem~\ref{core-actinggp}, there exists $u \in \mathscr U(\emm)$ such that $ \Theta(\El(Q)) \subseteq u^{\ast}\El(P)u$, where $\emm= \El(M \rtimes P)$. Moreover $\Theta(1)=p$, $upu^{\ast} \in \El(P)$ and also $\Theta(\El(Q))= pu^{\ast}\El(P)up$. Next we denote by $\G= u^{\ast} P u$ and by $\mathcal G =\{\Theta (u_g)\, :\,g\in Q\}$. Notice that $\mathcal G \subseteq \El(\G)$ is a group of unitaries normalizing $\Theta(\El(N))$. Notice that the action $\sigma: P\rar {\rm Aut}( M )$ satisfies all the conditions in the hypothesis of \cite[Theorem~2.5]{CDHK20} and thus using the conclusion of the same theorem we get the following

	$$h_{\G}(\mathcal G)>0.$$
	
	Since $\G$ is isomorphic to the product of $m$ biexact groups, say $\G_1 \times \G_2\times\cdots\times \G_m$, then we get that $C_{\G}(g)= A$, $\widehat \G_i \times A$, for an amenable group $A$ and $i\in\overline{1,m}$. If $C_{\G}(g)= A$ then since  $\mathcal G$ is non-amenable we clearly have $\mathcal G'' \nprec \El(C_\G(g))$.  Next assume $C_{\G}(g)= A\times \widehat \G_i$ and assume by contradiction that $\mathcal G''\prec \El(C_\G(g))$. As $Q=Q_1\times Q_2\times \cdots\times Q_m$ for $Q_i$ property (T) icc group, then $\mathcal G'' =\underset{j\in\overline{1,m}}{\bar\otimes}\Theta (\El(Q_j)) $ is a $\rm II_1$ factor with property (T). Since $\mathcal G '' \prec \El(A\times \widehat \G_i)=\El(A)\bar\otimes \El(\widehat \G)$  and $\El(A)$ is amenable then it follows that $\mathcal G '' \prec \El(\widehat \G)$. However by \cite{OP07} this is impossible as $\El(\G_i)$'s are solid and $\mathcal G''$ is generated by two non-amenable commuting subfactors. Hence we have shown the following:
	\begin{center}
		$\mathcal G '' \nprec\El(C_{\G}(g))$ for $e\neq g\in\G$.
	\end{center}
	Note that we have $\Theta(\El(Q))=\mathcal G''=p\El(\G)p$. Using the fact that $Q$ is icc together with \cite[Proposition~3.4]{CSU13} we get that the representation $Ad(Q)$ on $L^2(\El(Q)\ominus\mathbb C)$ is weak mixing. Hence combining these we get the following:
	\begin{center}
		The unitary representation $\{{\rm Ad}(v)\}_{v \in \mathcal G}$ on $L^2(p \El(\G)p \ominus \mc p)$ is weakly mixing.
	\end{center}  
	All the above results together with \cite[Theorem~2.4]{CDHK20} implies that $p=1$. The arguing exactly same as in the proof of \cite[Theorem~4.6]{CDHK20} we can conclude our theorem. 
\end{proof}

\begin{corollary}\label{trivialfundamentalgroup_S_m,n}
	For any group $G=N\rtimes Q\in\mathscr{S}_{m,n}$ with $n,m\geq 2$, the fundamental group of the corresponding group von Neumann algebras $\El(G)$ is trivial, i.e. $\mathcal F(\El(G))=\{1\}$.
\end{corollary}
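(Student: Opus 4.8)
The plan is to read off the triviality of the fundamental group directly from the corner-rigidity statement of Theorem~\ref{mainthm}, whose crucial output is the conclusion $p=1$ for \emph{any} $\ast$-isomorphism onto a compression. In other words, the heavy lifting has already been done, and all that remains is a routine amplification argument.

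Recall that $\mathcal F(\El(G))$ is the multiplicative subgroup of $\mathbb R_+$ consisting of all $t>0$ with $\El(G)^t\cong \El(G)$. First I would treat the case $t\le 1$. For such $t$, by the definition of the $t$-amplification, $\El(G)^t$ is the isomorphism class of $p\El(G)p$ for a projection $p\in\El(G)$ with $\tau(p)=t$. Hence an isomorphism $\El(G)\cong \El(G)^t$ produces a $\ast$-isomorphism $\Theta:\El(G)\to p\El(G)p$.

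Now I would apply Theorem~\ref{mainthm} with $M\rtimes P=N\rtimes Q=G$, i.e.\ taking both copies to be the same group in class $\mathscr S_{m,n}$, together with the projection $p$ and the isomorphism $\Theta$ above. The theorem then forces $p=1$, whence $t=\tau(p)=1$. Finally, for arbitrary $t>0$, since $\mathcal F(\El(G))$ is a subgroup of $(\mathbb R_+,\cdot)$ we have $t\in\mathcal F(\El(G))$ if and only if $t^{-1}\in\mathcal F(\El(G))$, and at least one of $t,t^{-1}$ lies in $(0,1]$. By the previous paragraph that one must equal $1$, so $t=1$. Therefore $\mathcal F(\El(G))=\{1\}$.

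I do not expect a substantial obstacle here, since the entire weight is carried by the conclusion $p=1$ in Theorem~\ref{mainthm}. The only point requiring a little care is the verification that an isomorphism onto an amplification with $t\le1$ is genuinely a $\ast$-isomorphism onto a compression $p\El(G)p$ of the \emph{same} factor, so that the hypotheses of Theorem~\ref{mainthm}---which ask for both factors to arise from the class $\mathscr S_{m,n}$---are satisfied by choosing the second group equal to the first.
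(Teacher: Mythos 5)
Your proposal is correct and is precisely the intended argument: the paper states Corollary~\ref{trivialfundamentalgroup_S_m,n} as an immediate consequence of Theorem~\ref{mainthm} (applied with $M\rtimes P=N\rtimes Q$), and the reduction to $t\le 1$ via the group structure of $\mathcal F(\El(G))$ together with the conclusion $p=1$ is exactly the standard deduction the authors leave implicit.
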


We end this section by showing that $\El(\G)$ is prime if $\G\in\mathscr{S}_{m,n}$. This can be deduce from Theorem~\ref{mainthm} together with \cite[Lemma~2.3]{D20}.

\begin{corollary}\label{primeSm,n}
	Let $\G\in\mathscr{S}_{m,n}$ with $n,m\geq 2$. Then $\El(\G)$ is prime.
\end{corollary}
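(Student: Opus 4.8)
The plan is to argue by contradiction starting from a hypothetical tensor decomposition, and to feed it into the location theorem for commuting property (T) subalgebras already established in this section. Suppose $\El(\G)=\emm$ is not prime, so that $\emm=\mathcal P\bar\otimes\mathcal Q$ for two $\mathrm{II}_1$ factors $\mathcal P,\mathcal Q$. Since $\G\in\mathscr S_{m,n}$ arises as an extension of property (T) groups, $\emm$ has property (T), and property (T) for $\mathrm{II}_1$ factors is inherited by tensor factors (this is precisely the content of \cite[Lemma~2.3]{D20}); hence both $\mathcal P$ and $\mathcal Q$ have property (T). Identifying $\mathcal P$ with $\mathcal P\otimes 1$ and $\mathcal Q$ with $1\otimes\mathcal Q$, we obtain two commuting property (T) $\mathrm{II}_1$ subfactors of $\emm$ with $\mathcal P\vee\mathcal Q=\emm$, and, since $\emm$ is a factor, $(\mathcal P\vee\mathcal Q)'\cap\emm=\mathbb C 1$.

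With this set-up the hypotheses of Theorem~\ref{intertwiningthm} are met with $r=1$, so that either $\mathcal P\vee\mathcal Q\prec^s_{\emm}\El(N)$ or $\mathcal P\vee\mathcal Q\prec^s_{\emm}\El(Q)$. Because $\mathcal P\vee\mathcal Q=\emm$ and the relative commutant of $\emm$ in itself is trivial, the only projection to test in the definition of $\prec^s$ is the identity, so here strong intertwining $\prec^s$ coincides with ordinary intertwining $\prec$. Thus we arrive at $\emm\prec_{\emm}\El(N)$ or $\emm\prec_{\emm}\El(Q)$. (Equivalently one could invoke Theorem~\ref{controlcommutingpropTsubalgebratrivialf_group} directly and then upgrade to $\prec^s$ via \cite[Lemma~2.4]{DHI16}, but Theorem~\ref{intertwiningthm} already packages exactly this dichotomy.)

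It then remains only to rule out both alternatives, which is the single genuine point to verify. As in the proof of Theorem~\ref{core-actinggp}, an intertwining of the whole algebra $\emm\prec_{\emm}\El(K)$ into a subalgebra $\El(K)\subseteq\emm$ forces $\El(K)$ to have finite index in $\emm$, since $\mathcal{QN}_{\emm}(\El(K))''=\emm$ (by \cite[Lemma~3.5]{Po03}) and then \cite[Theorem~2.3]{CD18} applies. Taking $K=N$ would make $\El(N)$ finite index in $\emm=\El(N)\rtimes Q$, which is impossible since $[\G:N]=|Q|=\infty$; taking $K=Q$ would make $\El(Q)$ finite index, again impossible since $[\G:Q]=|N|=\infty$. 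Either way we reach a contradiction, so no decomposition $\emm=\mathcal P\bar\otimes\mathcal Q$ into $\mathrm{II}_1$ factors exists and $\El(\G)$ is prime. I expect the real weight of the argument to sit entirely inside Theorem~\ref{intertwiningthm} (via Theorem~\ref{controlcommutingpropTsubalgebratrivialf_group}), whose proof already performs the delicate relative-hyperbolicity–based location of commuting property (T) subalgebras; granting those, the passage to primeness is short, the main thing to check being the finite-index contradiction described above together with the property (T) transfer to the tensor factors.
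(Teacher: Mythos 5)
Your argument is essentially the paper's own proof: view the two tensor factors as commuting property (T) II$_1$ subfactors with trivial joint relative commutant, apply Theorem~\ref{intertwiningthm} to get $\emm\prec_{\emm}\El(N)$ or $\emm\prec_{\emm}\El(Q)$, and derive a contradiction. The paper merely asserts that both cases are contradictory, and your finite-index argument via \cite[Theorem~2.3]{CD18} together with $[\G:N]=[\G:Q]=\infty$ is exactly how the paper resolves the analogous situations elsewhere (e.g.\ in Theorem~\ref{core-actinggp}), so the proposal is correct and matches the intended route.
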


\begin{proof}
	Let $\emm=\El(\G)=\El(N_1\times\cdots\times N_m)\rtimes Q=\mathcal P_1\bten \mathcal P_2$. Since $\mathcal P_1,\mathcal P_2$ are commuting property (T) subfactor of $\emm$ such that $(\mathcal P_1\vee\mathcal P_2)'\cap \emm=\mathbb{C}$. Hence by using Theorem~\ref{intertwiningthm}, we get that 
	\begin{enumerate}
		\item[a)] either $\mathcal P_1\vee\mathcal P_2\prec^s_{\emm}\El(N_1\times\cdots\times N_m)$, or
		\item[b)] $\mathcal P_1\vee\mathcal P_2\prec^s_{\emm}\El(Q)$. 
	\end{enumerate}
	Since $\mathcal P_1\vee \mathcal P_2=\El(\G)$, both cases above leads to a contradiction and hence we get that $\El(\G)$ is prime.  
\end{proof}


\section{Trivial fundamental group arising from Class $\mathscr{S}_{\infty,n}$}

Consider $\emm:=\otimes_{i=1}^{\infty}\El(\G)$, where $\G\in\mathscr{S}_{m,n}$. Then we have  that $\mathcal{F}(\bten_{i=1}^{k}\El(\G))=\{1\}$ for all $k\geq 1$ by Corollary~\ref{trivialfundamentalgroup_S_m,n} and Corollary~\ref{upffrom_s_mn}. However, note that $\mathcal F(\emm)=\mathcal F(\lim_{k\rightarrow\infty} \bten_{i=1}^{k}\El(\G))=\mathbb R_+$ as $\emm$ is a McDuff factor. In this section we are going to consider inductive limits of groups arising from $\mathscr{S}_{m,n}$ by letting $m\rightarrow \infty$. Novelty of this section is that we  show that the group von Neumann algebras associated to this class of groups has trivial fundamental group.

\begin{theorem}\label{slekt-counter}
Let $N\rtimes Q, M\rtimes P \in \mathscr S_{\infty,n}$ with $N=N_1\times N_2\times\cdots$ and $M=M_1\times M_2\times\cdots$. Let $p\in \El(M\rtimes P)$ be a projection and assume that  $\Theta: \El(N \rtimes Q) \rightarrow p\El(M \rtimes P)p$ is a $\ast$-isomorphism. Then $p=1$ and one can find  $\ast$-isomorphisms, $\Theta_i: \El(N_i) \rightarrow \El(M_i)$, a group isomorphism $\delta: Q \rightarrow P$, a multiplicative character $\eta: Q \rightarrow \mathbb T$, and a unitary $u \in \mathscr U( \El(M \rtimes P))$ such that  for all $ g \in Q$, $x_i \in N_i$ we have that$$ \Theta((x_1 \otimes x_2\otimes \cdots\otimes x_m)u_{g})= \eta(g)u(\Theta_1(x_1) \otimes \Theta_2(x_2)\otimes\cdots\otimes\Theta(x_m))v_{\delta(g)})u^{\ast}. $$
 \end{theorem}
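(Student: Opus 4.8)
The plan is to transport the proof of Theorem~\ref{mainthm} to the inductive-limit setting, the essential new difficulty being that the ambient factor $\El(N\rtimes Q)$ is no longer rigid and must be replaced by its exhausting family of property (T) subfactors. Write $\emm=\El(M\rtimes P)=\overline{\bigcup_k \El(G_k)}$, where $G_k=(M_1\times\cdots\times M_k)\rtimes P\in\mathscr S_{k,n}$ has property (T). First I would establish the $\mathscr S_{\infty,n}$-analog of the location Theorem~\ref{core-actinggp}: after composing $\Theta$ with an inner automorphism, $\Theta(\El(N))=p\El(M)p$ and $\Theta(\El(Q))=pu^{*}\El(P)up$ for some $u\in\mathscr U(\emm)$. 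The mechanism is uniform: any commuting pair of property (T) factors inside $p\emm p$ — in particular $\Theta(\El(N_i))$ together with $\Theta(\El(N_{F\setminus i}))$ for a finite $F\ni i$, and likewise the pieces of $\Theta(\El(Q))$ — generates a property (T) subfactor, which therefore intertwines into a finite stage $\El(G_k)\in\mathscr S_{k,n}$; there one invokes the finite-stage control Theorems~\ref{controlcommutingpropTsubalgebratrivialf_group} and~\ref{intertwiningthm}, and the resulting strong intertwinings are patched across $F$ by the gluing lemma \cite[Corollary~1]{CDHK20}. This yields $\Theta(\El(N))\prec^s_\emm\El(M)$ and $\Theta(\El(Q))\prec^s_\emm\El(P)$, which upgrade to unitary conjugacy of corners exactly as in \cite[Theorems~4.4, 4.5]{CDHK20}; the product feature of the acting group is recovered as in Theorem~\ref{acting-productfeature}.

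With $\Theta(\El(N))=p\El(M)p$ in hand, set $\G=u^{*}Pu$ and $\mathcal G=\{\Theta(u_g):g\in Q\}\subset\mathscr U(\El(\G))$, a group of unitaries normalizing $p\El(M)p$. The decisive observation is that, although $\El(N\rtimes Q)$ lacks property (T), the acting group $Q$ — and hence $\mathcal G''=\Theta(\El(Q))$ — still has it, so the height input survives the passage to the limit. After checking that the diagonal action $\sigma:P\ca M=\oplus_i M_i$ meets the hypotheses of \cite[Theorem~2.5]{CDHK20}, I would deduce $h_\G(\mathcal G)>0$. Since $\G\cong P_1\times\cdots\times P_n$ is a product of $n$ biexact, weakly amenable groups, the centralizer analysis $C_\G(g)\in\{A,\ A\times\widehat\G_i\}$ with $A$ amenable, combined with solidity \cite{OP07}, gives $\mathcal G''\nprec\El(C_\G(g))$ for every $e\neq g\in\G$; and since $Q$ is icc, \cite[Proposition~3.4]{CSU13} makes $\{\mathrm{Ad}(v)\}_{v\in\mathcal G}$ weakly mixing on $L^2(p\El(\G)p\ominus\mc p)$. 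Feeding this into \cite[Theorem~2.4]{CDHK20} forces $p=1$.

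Once $p=1$, the positive height upgrades via \cite[Theorem~3.1]{IPV10} to a group isomorphism $\delta:Q\to P$ twisted by a character $\eta:Q\to\mathbb T$ with $\Theta(u_g)\in\mathbb T\,v_{\delta(g)}$, while the product structure of the core $\El(M)$ splits the conjugacy $\Theta(\El(N))=\El(M)$ into factorwise isomorphisms $\Theta_i:\El(N_i)\to\El(M_i)$; assembling these as in \cite[Theorem~4.6]{CDHK20} produces the displayed formula.

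The hard part will be the first step: locating $\El(N)$ and $\El(Q)$ inside the non-rigid factor $\El(N\rtimes Q)$. One cannot run spectral-gap or deformation arguments on $\El(N\rtimes Q)$ directly; instead one must capture the property (T) blocks at finite stages and then patch the intertwinings coherently over all finite $F$, while controlling normalizers, so that the limiting algebra is \emph{exactly} a corner of $\El(M)$ and not something strictly larger — this is where \cite[Corollary~1]{CDHK20} and the bookkeeping over the infinitely many factors $M_i$ are essential. A secondary point to verify is that the diagonal action on the infinite core $\oplus_i M_i$ still satisfies the hypotheses of the height theorem \cite[Theorem~2.5]{CDHK20}.
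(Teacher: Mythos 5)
Your proposal follows essentially the same route as the paper's proof: exhaust $\El(M\rtimes P)$ by the property (T) stages $\El(G_k)$, intertwine the commuting property (T) blocks $\Theta(\El(N_i))$, $\Theta(\El(\widehat N_i))$ and $\Theta(\El(Q))$ into a finite stage where Theorems~\ref{controlcommutingpropTsubalgebratrivialf_group} and \ref{intertwiningthm} apply, patch over finite sets via \cite[Corollary~1]{CDHK20} and the lemmas of \cite{DHI16} to locate the core and the acting group up to unitary conjugacy, and then run the height/weak-mixing argument of \cite[Theorems~2.4, 2.5]{CDHK20} together with \cite[Theorem~3.1]{IPV10} to force $p=1$ and obtain the displayed formula. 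The points you flag as delicate (coherent patching over infinitely many factors, and verifying the height hypotheses for the diagonal action on the infinite core) are exactly the ones the paper addresses, so no substantive gap.
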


For every finite set $F\subseteq \mathbb N$, define $M^j_F:=N^j_F\rtimes Q$ (similarly $M_F$) where $N^j_F:=\times_{i\in F} N^j_i$ for all $j\in\{1,2\}$. Note that $M_F$ has property $(T)$ for every $F$ and $\emm=\underset{F}{\cup} \El(M^j_F)$. Since $\El(N^j_F)$ has property (T), we have 
\begin{align}
	\El(M^1_F)\prec \El(M^2_{F'}) \ for \ some F'.
\end{align}\label{finitesubsetintertwin}
Notice that for every finite set $F$ we have $\emm\subseteq \enn_{\emm}(\El(N^1_F))''$. Hence we get $\El(M^1_F)\prec^s \El(M^2_{F'})$. Without loss of generality we can now assume that $F=\{1,\cdots,k\}$ and $F'=\{ 1,\cdots, l\}$. So we have,
\begin{align}
	\El(N^1_F\rtimes Q^1)\prec^s_{\emm}\El(N^2_{F'}\rtimes Q^2).
\end{align}
For every $i\in\overline{1,k}$, let us denote $\Aa^1_i:=\Theta(\El(\times_{j\in F\setminus\{i\}} N_j))=\Theta(\El(\widehat N_i))$ and $\Aa^2_i:=\Theta(\El(N_i))$. Note that $\Aa^1_i,\Aa^2_i$ are two commuting, property $(T)$, regular subfactor of $p\emm p$ Notice that $A^j_i\subseteq \El(N^1_F\rtimes Q^1)$ and \ref{finitesubsetintertwin} implies that $\Aa^1_i$
Hence there exist projections $p\in \El(M^1_F)$ and $q\in\El(M^2_{F'})$, nonzero partial isometry $v\in\emm$ and a $ast$-isomorphism $\phi:p\El(N^1_F\rtimes Q^1) p\rightarrow \mathcal B:=\phi(p\El(N^1_F\rtimes Q^1) p)\subseteq q \El(N^2_{F'}\rtimes Q^2)q$ such that
\begin{align}
	\phi(x)v=vx\ \textit{for all } x\in p\El(N^1_F\rtimes Q^1) p.
\end{align}
 Note that for every $j\in\overline{1,k}$ we denote $A_i=\phi(\Aa^i_j)$ for $i=1,2$. Now $\Aa_i$'s are commuting, property (T) subfactor of $\El(M_{F'}\rtimes P)$. Hence by Theorem~\ref{controlcommutingpropTsubalgebratrivialf_group}, for every $s\in F'$ we have the following:
 \begin{enumerate}
 	\item[i.] There exists $i\in\overline{1,2}$ such that $\mathcal A_i\prec_{ \emm}\El(\widehat M_s)$;
 	\item[ii.] $\mathcal A_1\vee\mathcal A_2\prec_{ \emm} \El(P)$.
 \end{enumerate} 
Assuming $ii.$ and the fact $N_F\triangleleft N\rtimes Q$, we have $\El(N_F)\prec^s_{ \emm}\El(P)$. Then there exists projections $r \in \El(N_F)$, $q \in \El(P)$, a nonzero partial isometry $v \in q \emm r$, and a $\ast$-homomorphism $\psi:r\El(N_F) r \rightarrow \psi( r \Aa r) \subseteq q \El(P)q$ such that $\psi(x)v=vx$ for all $x \in r \El(N_F) r$. 
Arguing exactly as in the proof of \cite[Theorem 5.5]{CDK19}, we can show that $v \mathcal{QN}_{r \emm r}(r \El(N_F) r)''v^{\ast} \subseteq  q\El(P) q$. 

Now, $\mathcal{QN}_{r \emm r}(r\El(N_F) r)''= r \emm r$, using \cite[Lemma 3.5]{Po03}. Thus, $\emm \prec_{\emm} \El(P)$ and hence $\El(P)$ has finite index in $\emm$ by \cite[Theorem 2.3]{CD18}, which is a contradiction. So we must have $i.$.

Now assuming i. for every $s\in F'$ and for every $j\in \overline{1,k}$ we have either $\El(N_j)\prec^s_{ \emm}\El(\widehat M_s)$ or  $\El(\widehat N_j)\prec^s_{ \emm}\El(\widehat M_s)$. 

If for some $s$ and for all $j$ we have  $\El(N_j)\prec^s_{ \emm}\El(\widehat N_s)$, then by \cite[Corollary~1]{CDHK20} we get that $\El(N_F)=\vee_j \El(N_j)\prec^s_{ \emm}\El(\widehat M_s)\subseteq \El(M_{F'})$. Using normality again we get that $\El(N_F)\prec_{ \emm}^s\El(M_{F'})$. 

If this is not the case then for every $s\in F'$ there is $j_s\in\overline{1,k}$ such that $\El(\widehat N_{j_s})\prec^s_{\emm}\El(\widehat M_s)$. Now we claim that there is $s\neq t$ such that $j_s\neq j_t$. Let us assume by contradiction that $j_s=j_t$ for all $s,t\in F'$. Then we get that $\El(\widehat N_{j_s})\prec^s_{\emm}\El(\widehat M_s)$ for every $s\in F'$. By \cite[Lemma~2.6]{DHI16} this then implies that $\El(\widehat N_{j_s})\prec^s_{\emm}\El(\cap_{s\in F}\widehat N_s )=\mathbb C$ which is a contradiction. Hence our claim is true. Now arguing same as Theorem~\ref{core-actinggp} we conclude that $\El(N_F)\prec^s_{\emm}\El(M_{F'})$. Notice that $\El(M_{F'})\subseteq \El(M)$. Using this together with the fact that $\El(N_F)$ is regular in $\emm$ and \cite[Lemma~2.6]{DHI16} we conclude that for every finite subset $F$ we get that $\El(N_F)\prec_{ \emm}^s\El(M)$. Using \cite[Theorem~2.7]{DHI16} and \cite[Theorem~1.4]{PV12} we conclude that $\El(N)\prec_{ \emm}^s\El(M)$.

Now repeating the above argument other way, we get that $p\El(M)p\prec_{p\emm p} \Aa$. Using exactly same as the last part of \cite[Theorem~4.4]{CDHK20} we conclude that there exists a unitary $v\in \mathcal U(p\El(M\rtimes P)p)$ such that $\Theta(\El(N))=vp\El(M)pv^{\ast}$.

Arguing exactly as in the last part of Theorem~\ref{core-actinggp} we get unitary $u\in\mathcal U(\El(M\rtimes P))$ such that we have $\Theta(\El(Q))=pu^{\ast}\El(P)up$.

 Next we denote by $\G= u^{\ast} P u$ and by $\mathcal G =\{\Theta (u_g)\, :\,g\in Q\}$. Notice that $\mathcal G \subseteq \El(\G)$ is a group of unitaries normalizing $\Theta(\El(N))$. Notice that the action $\sigma: P\rar {\rm Aut}( M )$ satisfies all the conditions in the hypothesis of \cite[Theorem~2.5]{CDHK20} and thus using the conclusion of the same theorem we get the following

$$h_{\G}(\mathcal G)>0.$$

The following statement follows directly from the proof of the second part of the Theorem~\ref{mainthm}:
\begin{center}
	$\mathcal G '' \nprec\El(C_{\G}(g))$ for $e\neq g\in\G$.
\end{center}
Note that we have $\Theta(\El(Q))=\mathcal G''=p\El(\G)p$. Using the fact that $Q$ is icc together with \cite[Proposition~3.4]{CSU13} we get that the representation $Ad(Q)$ on $L^2(\El(Q)\ominus\mathbb C)$ is weak mixing. Hence combining these we get the following:
\begin{center}
	The unitary representation $\{{\rm Ad}(v)\}_{v \in \mathcal G}$ on $L^2(p \El(\G)p \ominus \mc p)$ is weakly mixing.
\end{center}  
All the above results together with \cite[Theorem~2.4]{CDHK20} implies that $p=1$.

\begin{corollary}\label{trivialfundamentalgroupS_infty,n}
	For any group $G=N\rtimes Q\in\mathscr S_{\infty,n}$ for $n\geq 2$, $\mathcal F(\El(G))=\{1\}$.
\end{corollary}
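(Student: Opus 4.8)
The plan is to deduce the triviality of $\mathcal F(\El(\G))$ as an immediate formal consequence of the corner-rigidity already packaged in Theorem~\ref{slekt-counter}, exactly in the way Corollary~\ref{trivialfundamentalgroup_S_m,n} is expected to follow from Theorem~\ref{mainthm}. Recall that by definition $t\in\mathcal F(\El(\G))$ means precisely $\El(\G)^t\cong \El(\G)$. Since $\mathcal F(\El(\G))$ is a subgroup of the multiplicative group $\mathbb R_+$, it is enough to rule out every $t\in(0,1)$: were some $t>1$ to lie in $\mathcal F(\El(\G))$, then $t^{-1}\in(0,1)$ would lie there too. So the whole argument reduces to the single regime $t\leq 1$.

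First I would fix $t\in\mathcal F(\El(\G))\cap(0,1]$. By the description of amplifications recalled in the introduction, $\El(\G)^t$ is the isomorphism class of $p\,\El(\G)\,p$ for a projection $p\in\El(\G)$ with $\tau(p)=t$. The hypothesis $t\in\mathcal F(\El(\G))$ therefore produces a $\ast$-isomorphism
\[
\Theta:\El(\G)\longrightarrow p\,\El(\G)\,p .
\]
Next I would invoke Theorem~\ref{slekt-counter} with the choice $M\rtimes P=N\rtimes Q=\G$, which is legitimate since $\G\in\mathscr S_{\infty,n}$. The conclusion of that theorem is that any $\ast$-isomorphism from $\El(N\rtimes Q)$ onto a corner $p\,\El(M\rtimes P)\,p$ forces $p=1$. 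Applying this to $\Theta$ gives $p=1$, whence $t=\tau(p)=1$, as desired.

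The point worth stressing is that essentially no obstacle remains once Theorem~\ref{slekt-counter} is available: all of the analytic and group-theoretic weight has already been absorbed there, namely locating the core $\El(N)$ and the acting algebra $\El(Q)$ up to unitary conjugacy via Theorems~\ref{semidirectrigiditys-infty1} and \ref{acting-productfeature}, the height estimate $h_\G(\mathcal G)>0$, and the weak-mixing argument combined with \cite[Theorem~2.4]{CDHK20} that pins the projection to $1$. The only genuine bookkeeping is the reduction of the two regimes $t\leq 1$ and $t>1$ to one another through the group structure of $\mathcal F(\El(\G))$, which is routine. Consequently I expect the proof to be a short paragraph-length deduction rather than a new technical argument, and this is exactly why the statement is phrased as a corollary.
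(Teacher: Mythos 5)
Your proposal is correct and is exactly the deduction the paper intends: the corollary is stated without a separate proof as an immediate consequence of Theorem~\ref{slekt-counter}, and your argument (reduce to $t\le 1$ via the group structure of $\mathcal F$, realize $\El(\G)^t$ as $p\,\El(\G)\,p$ with $\tau(p)=t$, then apply Theorem~\ref{slekt-counter} with $M\rtimes P=N\rtimes Q$ to force $p=1$) is the standard and intended route. No gaps.
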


We now mention that Theorem~\ref{slekt-counter} provides examples of infinite families of finite index subgroups $\G_n\leq \G$ in a given non-property (T) group $\G$ such that the corresponding group factors $\El(\G_n)$ and $\El(\G_m)$ are stably non-isomorphic for all $n\neq m$. As $\G_n$'s are measure equivalent this provides new counter examples to D. Shlyakhtenko's question different from the one obtained in \cite{CI09,CdSS15,CDK19}. 

\begin{corollary}
	Let $Q_1,Q_2,\cdots ,Q_n$ be uniform lattices in Sp(n,1) with $n\geq 2$ and let $Q=Q_1\times Q_2\times\cdots\times Q_n$. Also let $\cdots \leq Q_i^s\leq\cdots\leq Q^2_i\leq Q^1_i\leq Q_i$ be an infinite family of finite index subgroups for some $i\in\overline{1,n}$ and denote by $Q_s:=(\times_{j\neq i}Q_j)\times Q^s_i$. Now consider $\G_s:=N\rtimes Q_s\leq N\rtimes Q=:\G\in \mathscr S_{\infty,n}$. Note that this is a finite index inclusion of groups. Consider the family $\mathcal F:=\{\El(\G_s)\}_{s\in I}$. Note that being finite index subgroups we have $\G_n$ measure equivalent to $\G_m$ for any $m\neq n$ but Theorem~\ref{slekt-counter} implies that the family consists of mutually stably non-isomorphic group factors.  
\end{corollary}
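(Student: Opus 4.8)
The plan is to derive a contradiction from any hypothetical stable isomorphism by feeding it into Theorem~\ref{slekt-counter} and then separating the acting groups by an $L^2$-invariant. First I would verify that each $\G_s$ genuinely lies in class $\mathscr{S}_{\infty,n}$. Its acting group is $Q_s=(\times_{j\neq i}Q_j)\times Q_i^s$; since $Q_i^s$ is finite index in the uniform lattice $Q_i<Sp(n,1)$ it is again such a lattice, hence icc, torsion free, biexact, property (T), weakly amenable and residually finite. Thus $Q_s$ is an admissible product of $n$ such groups, and each restricted Rips piece $N_k\rtimes Q_s$ is again a Rips construction over $Q_s$, relative hyperbolicity of $N_k\rtimes Q$ with respect to $\{Q\}$ passing to the finite-index subgroup $N_k\rtimes Q_s$ relative to $\{Q_s\}$. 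Hence $\G_s\in\mathscr{S}_{\infty,n}$ with core $N$ and acting group $Q_s$, and as finite-index subgroups of the common group $\G$ the various $\G_s$ are pairwise commensurable and therefore measure equivalent.

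Next, suppose toward a contradiction that $\El(\G_s)^t\cong\El(\G_{s'})$ for some $t>0$ and $s\neq s'$. Interchanging the roles of $s$ and $s'$ if necessary, we may assume $t\leq 1$, so that $\El(\G_s)^t=p\El(\G_s)p$ for a projection $p$ of trace $t$ and the stable isomorphism becomes a $\ast$-isomorphism $\Theta:\El(\G_{s'})\to p\El(\G_s)p$. Since both $\G_{s'},\G_s\in\mathscr{S}_{\infty,n}$, Theorem~\ref{slekt-counter} applies and yields simultaneously that $p=1$ (forcing $t=1$, which already reproves $\mathcal{F}(\El(\G_s))=\{1\}$) and a group isomorphism $\delta:Q_{s'}\to Q_s$ of the acting groups.

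It then remains to rule out $Q_s\cong Q_{s'}$ for $s\neq s'$, which I would do with the $L^2$-Euler characteristic. Each $Q_j$ is a lattice in the rank-one group $Sp(n,1)$, whose symmetric space has even real dimension $4n$, so its $L^2$-Betti numbers are concentrated in the middle degree $2n$ and are nonzero there; hence $\chi^{(2)}(Q_j)\neq 0$. Using multiplicativity of $\chi^{(2)}$ under direct products together with the finite-index scaling law $\chi^{(2)}(Q_i^s)=[Q_i:Q_i^s]\,\chi^{(2)}(Q_i)$ we obtain
\[
\chi^{(2)}(Q_s)=\Big(\prod_{j\neq i}\chi^{(2)}(Q_j)\Big)[Q_i:Q_i^s]\,\chi^{(2)}(Q_i),
\]
a nonzero quantity proportional to the index $[Q_i:Q_i^s]$. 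Since the chain $\cdots\leq Q_i^2\leq Q_i^1\leq Q_i$ is strictly descending these indices are pairwise distinct, so $\chi^{(2)}(Q_s)\neq\chi^{(2)}(Q_{s'})$ whenever $s\neq s'$; as $\chi^{(2)}$ is a group isomorphism invariant this contradicts $\delta:Q_{s'}\cong Q_s$, completing the argument.

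I expect the only genuine obstacle to be the first, essentially bookkeeping, step, namely confirming that the restricted actions keep $\G_s$ inside $\mathscr{S}_{\infty,n}$ so that Theorem~\ref{slekt-counter} is legitimately applicable, together with quoting correctly the nonvanishing of the middle $L^2$-Betti number of $Sp(n,1)$-lattices. Everything after the invocation of Theorem~\ref{slekt-counter} is a short and robust invariant computation, since that theorem does the heavy lifting by both killing the amplification and handing over an honest group isomorphism of the acting groups.
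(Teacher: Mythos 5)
Your proposal is correct, and it follows the route the paper clearly intends: feed a hypothetical stable isomorphism into Theorem~\ref{slekt-counter} to kill the amplification and extract a group isomorphism $\delta:Q_{s'}\to Q_s$ of the acting groups. The paper, however, offers no proof at all beyond the citation of Theorem~\ref{slekt-counter}, so the two steps you flag as needing care are precisely the ones the paper leaves unaddressed, and you handle both correctly: (i) the verification that each $\G_s$ genuinely lies in $\mathscr{S}_{\infty,n}$ (finite-index subgroups of torsion-free uniform lattices in $Sp(n,1)$ retain all the required properties, $Q_s$ still embeds with finite index into ${\rm Out}(N_k)$, and relative hyperbolicity of $N_k\rtimes Q$ with respect to $\{Q\}$ passes to the finite-index subgroup $N_k\rtimes Q_s$ with peripheral $\{Q_s\}$, there being a single double coset); and (ii) the separation of the groups $Q_s$, which the paper never mentions but without which the corollary does not follow, since distinct finite-index subgroups can in general be abstractly isomorphic. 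Your $L^2$-Euler characteristic computation is a clean way to do (ii): $\chi^{(2)}$ is multiplicative over products, scales by the index, and is nonzero for lattices in $Sp(n,1)$ because the fundamental rank vanishes, so $\chi^{(2)}(Q_s)$ is a fixed nonzero constant times $[Q_i:Q_i^s]$ and the nested subgroups, being distinct, have distinct indices. An equally valid alternative for (ii) is Mostow rigidity: an abstract isomorphism of two nested finite-index sublattices would force equal covolume, hence equal index, hence equality. Either way the argument closes, and your write-up is a complete proof of a corollary the paper merely asserts.
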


\section{Uniques Prime Factorization and Calculation of Fundamental Group}\label{uniqueprimeness} 

In this section we are going to show some unique prime factorization results arising from class $\mathscr S_{m,n}$ and class $\mathscr V$ and as a consequence we obtain some more examples of groups whose group von Neumann algebra has trivial fundamental group. We briefly recall class $\mathscr V$. $Sp(n,1)$ is the set of real points of an algebraic $\mathbb Q$-group. The group of integer points $\La_n:=Sp(n,1)_{\mathbb Z}$ is a lattice in $Sp(n,1)$. Now observe that $Sp(n,1)\ca \mathbb H^{n+1}=\mathbb R^{4(n+1)}$ in a way that $\La_n$ preserves $(\mathbb H_{\mathbb Z})^{n+1}\cong \mathbb Z^{4(n+1)}$. Now consider the natural semidirect product $G_n:=\mathbb Z^{4(n+1)}\rtimes \La_n$ for every $n\geq 2$. Class $\mathscr V$ consists of all $k$-fold products of all such groups for any $k\geq 1$.  

The following theorem follows directly from \cite[Proposition~3.1,Theorem~3.2]{D20}, but we give some details for the convenience of the reader.
\begin{theorem}\label{UPFSV}
	Let $\G\in \mathscr S_{m,n}$ and $\La\in\mathscr V$ be two groups. Then $\El(\G\times\La)$ admits a unique prime factorization.  
\end{theorem}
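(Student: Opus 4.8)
The plan is to derive unique prime factorization by combining the primeness of the individual group factors with the location theorems for commuting property (T) subalgebras established earlier in this article, organised along the general strategy of \cite[Proposition~3.1, Theorem~3.2]{D20}. First I would fix the natural decomposition: writing $\La=G_{n_1}\times\cdots\times G_{n_k}$ one has $\El(\G\times\La)=\El(\G)\bten\El(G_{n_1})\bten\cdots\bten\El(G_{n_k})$, where $\El(\G)$ is prime by Corollary~\ref{primeSm,n} and each $\El(G_{n_j})$ is a prime property (T) factor, as established in \cite{CDHK20}. It then suffices to show that for any decomposition $\El(\G\times\La)=\mathcal P_1\bten\cdots\bten\mathcal P_t$ into prime $\mathrm{II}_1$ factors one has $t=k+1$ and, after a permutation and passing to amplifications, the list $\{\mathcal P_i\}$ agrees with $\{\El(\G),\El(G_{n_1}),\dots,\El(G_{n_k})\}$.

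The heart of the argument is a location step: given such an abstract factorization, I would show that each $\mathcal P_i$, being one of two commuting subfactors with trivial relative commutant in $\emm=\El(\G\times\La)$, intertwines into exactly one of the natural coordinate factors, and conversely that each natural factor is captured by exactly one $\mathcal P_i$. For the coordinates coming from $\mathscr V$, this control is provided by the unique prime factorization machinery for products of biexact, weakly amenable groups (Ozawa--Popa, \cite{OP03,OP07}) together with the product rigidity of \cite{CdSS15}. For the property (T) coordinate $\El(\G)$ the required rigidity is exactly Theorem~\ref{controlcommutingpropTsubalgebratrivialf_group} and Theorem~\ref{intertwiningthm}: any pair of commuting property (T) subfactors of $\El(\G)$ with trivial relative commutant must embed entirely into $\El(N)$ or into $\El(Q)$, which lets $\El(\G)$ be treated as a single indecomposable building block in the \cite{D20} framework.

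Once every $\mathcal P_i$ has been located, I would run the counting and bootstrapping argument of \cite[Proposition~3.1, Theorem~3.2]{D20}: the mutual intertwinings assemble into a bijection between $\{\mathcal P_1,\dots,\mathcal P_t\}$ and $\{\El(\G),\El(G_{n_1}),\dots,\El(G_{n_k})\}$, which forces $t=k+1$ and yields the stated isomorphisms up to amplification. Routine bookkeeping with relative commutants and \cite[Lemma~2.8 (2)]{DHI16}-type cut-down arguments then upgrades the corner intertwinings to genuine identifications of the factors.

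The main obstacle I anticipate is preventing a single prime factor $\mathcal P_i$ from straddling the property (T) block $\El(\G)$ and one of the $\mathscr V$-blocks $\El(G_{n_j})$: these arise from genuinely different structural mechanisms (a Rips-construction property (T) group on one side, a semidirect product $\mathbb Z^{4(n_j+1)}\rtimes\La_{n_j}$ on the other), so the two available rigidity tools---property (T) location for the $\G$-coordinate and biexactness/solidity for the $\mathscr V$-coordinates---must be deployed simultaneously and reconciled. Concretely, the delicate point is to verify the hypotheses of the \cite{D20} criterion in this mixed setting, showing that a commuting pair exhausting $\emm$ cannot interpolate between coordinates of different type; this is precisely where the property (T) of $\El(\G)$, via Theorem~\ref{intertwiningthm}, has to be played off against the biexactness of the lattices $\La_{n_j}$.
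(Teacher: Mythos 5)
Your overall architecture (locate each abstract prime factor into a natural coordinate, then count and upgrade to isomorphisms \`a la \cite{D20}) is the right shape, but the central location step is not justified by the tools you cite, and this is where the actual work lies. The blocks $G_{n_j}=\mathbb Z^{4(n_j+1)}\rtimes\La_{n_j}$ in class $\mathscr V$ are neither biexact nor hyperbolic: they contain the infinite normal amenable subgroup $\mathbb Z^{4(n_j+1)}$, so $\El(G_{n_j})$ is not solid and the Ozawa--Popa unique prime factorization machinery of \cite{OP03,OP07}, as well as the product rigidity of \cite{CdSS15} (which requires hyperbolic property (T) factors), simply do not apply to these coordinates. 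Likewise, Theorem~\ref{intertwiningthm} and Theorem~\ref{controlcommutingpropTsubalgebratrivialf_group} concern commuting property (T) subfactors \emph{of $\El(\G)$ itself}, whereas your $\mathcal P_i$ live in $\El(\G\times\La)$; you cannot invoke them until you have already produced an intertwining into $\El(\G)$, which is precisely the step that needs an argument. In the paper these theorems enter only indirectly, as the input to Corollary~\ref{primeSm,n} (primeness of $\El(\G)$), which is then used as a black box.

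The mechanism the paper actually uses is different: for $\La\in\mathscr V_1$ write $\La=\Aa\rtimes H$ with $\Aa$ amenable and $H$ hyperbolic, so that $\emm=\El(\G\times\La)=\El(\G\times\Aa)\rtimes H$ with $H$ acting trivially on $\El(\G)$. Given $\emm=\mathcal P_1\bten\mathcal P_2$, one applies the Popa--Vaes dichotomy \cite{PV12} to a diffuse amenable subalgebra of $\mathcal P_1$ relative to $\El(\G\times\Aa)$; the relative amenability branch is excluded because $\mathcal P_2$ has property (T) (note both tensor factors have property (T) here, since $\G$ and the groups in $\mathscr V$ all do --- your framing of a ``property (T) block versus biexact blocks'' dichotomy between coordinates is not how the argument runs). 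This yields $\mathcal P_i\prec_\emm\El(\G\times\Aa)$ for some $i$, one descends to $\mathcal P_i\prec_\emm\El(\G)$ as in \cite[Proposition~3.1]{D20}, and then \cite[Proposition~12]{OP03} together with \cite{Ge96} produces a unitary $u$ and $s>0$ with $u\El(\G)^su^*=\mathcal P_1\bten Q$ for some $Q\subseteq\mathcal P_2$; primeness of $\El(\G)$ (Corollary~\ref{primeSm,n}) forces $Q$ to be finite dimensional, giving the desired matching. The general $\La\in\mathscr V$ is then handled by induction as in \cite[Theorem~3.2]{D20}. Without the crossed-product-by-a-hyperbolic-group decomposition and the \cite{PV12} dichotomy, your proposal has no way to start the location argument, so as written it has a genuine gap.
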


\begin{proof}
	Let us first assume that $\La\in\mathscr{V}_1$. Let $\La=\Aa\rtimes H$, where $\Aa$ is amenable and $H$ is hyperbolic group. Suppose $\emm=\El(\G\times \La)=\mathcal P_1\bten\mathcal P_2$. Observe that $\emm=\El(\G\times \Aa)\rtimes H$ where $H\ca\El(\G)$ is trivial. For a diffuse, amenable subalgebra $\Bee\subseteq \cp_1$, using \cite{PV12}, we have either
	\begin{enumerate}
		\item[a.] $\Bee\prec_{ \emm}\El(\G\times \Aa)$, or
		\item[b.] $\cp_2\subseteq \enn_{\emm}(\Bee)''\lessdot \El(\G\times \Aa)$. 
	\end{enumerate}
Assuming a. in conjenction with \cite[Corollary~F.14]{BO08} we get that $\cp_1\prec_{ \emm}\El(\G\times \Aa)$. Assuming b. together with the fact that $\cp_2$ has property (T), we get that $\cp_2\prec_{ \emm}\El(\G\times\Aa)$. Without loss of generality we assume that $\cp_1\prec_{ \emm}\El(\G\times \Aa)$. Then following the proof of \cite[Proposition~3.1]{D20}, we get that $\cp_1\prec_{ \emm}\El(\G)$. Then by \cite[Proposition~12]{OP03} and \cite{Ge96} there is $s>0$,  $u\in\mathcal U(\emm)$ and a von Neumann subalgebra $Q\subseteq \cp_2$ such that $u\El(\G)^su^{\ast}=\cp_1\bten Q$. Since $\El(\G)$ is prime by Corollary~\ref{primeSm,n}, we get that $Q$ is finite dimensional. Hence there exist some $t>0$ such that $u\El(\G)^tu^{\ast}=\cp_1$ and $u\El(\La)^{\frac{1}{t}}u^{\ast}=\cp_2$.  

Now the general case when $\La\in\mathscr{V}$ follows from an induction argument as used in \cite[Theorem~3.2]{D20}.
\end{proof}

We now show unique prime factorization result for class $\mathscr{S}_{m,n}$ generalizing the UPF result for class $\mathscr{S}$ from \cite[Theorem~3.4]{D20}.

\begin{theorem}\label{upfS}
	Let $\G_i\in\underset{m,n\geq 2}{\bigcup}\mathscr{S}_{m,n}$ for $i=1,2$. Then $\El(\G_1\times \G_2)$ admits a UPF.
\end{theorem}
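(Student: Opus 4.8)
The plan is to establish a unique prime factorization (UPF) result for products of two groups from the union of classes $\mathscr{S}_{m,n}$. The strategy mirrors the template set by Theorem~\ref{UPFSV} and by \cite[Theorem~3.4]{D20}, adapted to the present setting. Suppose $\emm=\El(\G_1\times\G_2)=\cp_1\bten\cp_2$ is an arbitrary tensor decomposition into two II$_1$ factors. The goal is to show that, up to unitary conjugacy and amplification, the decomposition $\cp_1\bten\cp_2$ must coincide with $\El(\G_1)\bten\El(\G_2)$, so that each $\cp_i$ is (a corner of) one of the group factors $\El(\G_i)$.

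First I would record the structural ingredients already available. Each $\G_i\in\mathscr{S}_{m_i,n_i}$ has property (T), and by Corollary~\ref{primeSm,n} the factor $\El(\G_i)$ is prime. Write $\G_i=N^{(i)}\rtimes Q^{(i)}$ with $Q^{(i)}=Q_1^{(i)}\times\cdots\times Q_{n_i}^{(i)}$ a product of biexact, weakly amenable, property (T) groups. The key point is that $\emm$ is built from biexact/relatively hyperbolic pieces, so Popa--Vaes type intertwining dichotomies apply. Concretely, for a diffuse amenable subalgebra $\Bee\subseteq\cp_1$, one invokes \cite[Theorem~1.4]{PV12} (the relative version for the biexact building blocks) to obtain that either a corner of $\cp_1$ intertwines into $\El(\G_1)$, or else $\cp_2$ is contained in the normalizer $\enn_\emm(\Bee)''$ which is amenable relative to $\El(\G_1)$. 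Since each $\cp_j$ has property (T) (being a tensor factor of the property (T) factor $\emm$), the relative amenability alternative upgrades to honest intertwining. Thus without loss of generality $\cp_1\prec_\emm\El(\G_1)$.

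Next I would upgrade this corner intertwining to a full matching of tensor factors. Because $\cp_1\prec_\emm\El(\G_1)$ and $\El(\G_1),\El(\G_2)$ are commuting subfactors whose tensor product is all of $\emm$, one applies \cite[Proposition~12]{OP03} together with the nonatomicity/dimension argument of \cite{Ge96} to produce $s>0$, a unitary $u\in\mathcal U(\emm)$, and a von Neumann subalgebra $\enn\subseteq\cp_2$ with $u\El(\G_1)^s u^\ast=\cp_1\bten\enn$. Invoking primeness of $\El(\G_1)$ (Corollary~\ref{primeSm,n}) forces $\enn$ to be finite dimensional, hence after adjusting the amplification there is $t>0$ with $u\El(\G_1)^t u^\ast=\cp_1$ and correspondingly $u\El(\G_2)^{1/t}u^\ast=\cp_2$. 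This is exactly the conclusion that the prime factorization is unique.

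The main obstacle I anticipate is verifying the first intertwining dichotomy uniformly across the whole union $\bigcup_{m,n\geq 2}\mathscr{S}_{m,n}$ rather than for a single fixed pair $(m,n)$: one must ensure the biexactness/weak amenability hypotheses feeding \cite[Theorem~1.4]{PV12} are available for the building blocks of \emph{both} $\G_1$ and $\G_2$ simultaneously, and that the passage from relative amenability to intertwining genuinely uses property (T) of the tensor factors and not of the ambient group in a way that breaks when $m$ varies. A secondary subtlety is that, unlike the class $\mathscr V$ case in Theorem~\ref{UPFSV} where one factor carries an amenable normal subgroup, here both $\G_1$ and $\G_2$ are genuinely rigid, so the argument is symmetric and one cannot privilege one side; care is needed to ensure the dichotomy is applied so that the non-amenable, property (T) tensor factor $\cp_2$ is the one captured in the normalizer alternative. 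Once these points are handled, the remainder is the standard OP03--Ge machinery combined with primeness, which I would present briefly by reference to \cite[Theorem~3.4]{D20}.
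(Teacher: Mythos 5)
There is a genuine gap at the very first step of your argument. You propose to obtain the dichotomy ``either $\cp_1\prec_\emm\El(\G_1)$ or $\cp_2\subseteq\enn_\emm(\Bee)''$ is amenable relative to $\El(\G_1)$'' by applying \cite[Theorem~1.4]{PV12} to a diffuse amenable $\Bee\subseteq\cp_1$. That theorem (and its relative versions) requires the ambient algebra to be a crossed product by a weakly amenable, biexact (e.g.\ hyperbolic) group over the target subalgebra. This is exactly the situation in Theorem~\ref{UPFSV}, because there the second group lies in $\mathscr V$, so $\La=\Aa\rtimes H$ with $\Aa$ amenable and $H$ hyperbolic, and $\emm=\El(\G\times\Aa)\rtimes H$ is genuinely a crossed product by a hyperbolic group. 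In Theorem~\ref{upfS}, however, both factors come from $\mathscr S_{m,n}$: the group $\G_2=(M_1\times\cdots\times M_k)\rtimes P$ contains direct products of infinite property (T) subgroups $M_j$ and is neither biexact nor weakly amenable, and $\emm$ is not presented as a crossed product of $\El(\G_1)$ by any group to which \cite[Theorem~1.4]{PV12} applies. You flag this asymmetry yourself as an ``anticipated obstacle,'' but you do not supply the substitute argument, and without it the whole first intertwining $\cp_1\prec_\emm\El(\G_1)$ is unsupported.

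The paper's route is genuinely different at this stage. It decomposes $\G_1\times\G_2$ inside the overgroup $\times_{i=1}^{l+k}G_i$, where each building block $G_i=N_i\rtimes Q$ (resp.\ $M_j\rtimes P$) is hyperbolic relative to the residually finite subgroup $Q$ (resp.\ $P$), and applies the location theorem for two commuting property (T) subalgebras in products of relatively hyperbolic groups, \cite[Theorem~5.3]{CDK19}, to the tensor factors $\mathcal S_1,\mathcal S_2$ viewed inside $\tilde\emm=\El(\times_i G_i)$. It then rules out the parabolic alternative $\mathcal S_1\vee\mathcal S_2\prec\El(\widehat G_j\times H_j)$ by a finite-index argument via \cite[Proposition~2.3]{CD18}, shows that the surviving intertwinings $\mathcal S_{i_j}\prec\El(\widehat N_j)\bten\El(\G_2)$ must all involve the \emph{same} index $i$ (again by an index contradiction if they did not), and finally intersects the targets over all $j$ using \cite[Lemma~2.4(2)]{DHI16} and \cite[Lemma~2.8(2)]{DHI16} to land on $\mathcal S_i\prec_\emm\El(\G_2)$. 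Your second half (the \cite[Proposition~12]{OP03} and \cite{Ge96} upgrade combined with primeness from Corollary~\ref{primeSm,n}) is the standard conclusion and is fine once one of the $\mathcal S_i$ has been intertwined into one of the $\El(\G_j)$; the missing content is precisely how to produce that initial intertwining, and for that you need the commuting property (T) machinery rather than the Popa--Vaes dichotomy.
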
 

\begin{proof}
	Let $\G_1=(N_1\times\cdots\times N_l)\rtimes Q$,  $\G_2=(M_1\times\cdots\times M_k)\rtimes P$ and $\emm:=\El(\G_1\times \G_2)=\mathcal S_1\bten\mathcal S_2$. Note that it is enough to show that $\mathcal S_i\prec_{ \emm}\El(\G_2)$ for some $i=\{1,2\}$.
	
	Let us denote $G_i=N_i\rtimes Q$, $G_{l+j}=M_j\rtimes P$, $H_i=Q$ and $H_{l+j}=p$ for $i\in\overline{1,l}$ and $j\in\overline{1,k}$. Note that $G_i$ are hyperbolic relative to $H_i$ respectively for all $i\in\overline{1,l+k}$. Let $\tilde{\emm}:=\El(\times_{i=1}^{l+k} G_i)$. Note that $\El(\G_1\times \G_2)\subseteq \tilde{\emm}$ and $\emm'\cap\tilde{\emm}=\mathbb C$. Observe that $\mathcal S_i$ has property (T) for all $i=1,2$ since $\emm $ has property (T). We now have $\mathcal S_1$ and $\mathcal S_2$ two commuting, property (T) subfactor of $\tilde{\emm}$. By \cite[Theorem~5.3]{CDK19} we have for every $j\in\overline{1,l+k}$ either,
	\begin{enumerate}
		\item[a)] $\mathcal S_i\prec_{\tilde \emm}\El(\widehat G_j)$ or,
		\item[b)] $\mathcal S_1\vee\mathcal S_2\prec_{\tilde \emm}\El(\widehat G_j\times H_j)$.
	\end{enumerate}
We first show that b) does not happen. We are only going to show this for $j=1$ and all the other cases are similar. Assume that $\mathcal S_1\vee\mathcal S_2\prec_{\tilde{\emm}}\El(\widehat \G_1\times Q)$. This implies that $\El(\G_1)\prec_{ \emm}\El(\widehat \G_1\times Q)$. Arguing similarly as in the first part of Theorem~\ref{commutationcontrolincommultiplication} together with \cite[Lemma~2.5]{CDK19} we get that $\El(\G_1)\prec_{\El(\G_1)}\El(\widehat N_1\rtimes Q)$. Since $[\G_1:\widehat N_1\rtimes Q]=\infty$, this is a contradiction by \cite[Proposition~2.3]{CD18}. 

Hence, we have a), i.e. for every $j\in\overline{1,l+k}$ there is $i_j\in\{1,2\}$ such that $\mathcal S_{i_j}\prec_{\tilde \emm}\El(\widehat G_j)$. Letting $j=1$ we get $\mathcal S_{i_1}\prec_{\tilde \emm}\El(\widehat G_1)$. Notice that $\mathcal S_{i_j}\subseteq \El(\G_1\times \G_2)$. Hence by applying \cite[Lemma~2.3]{CDK19} we get that $\mathcal S_{i_1}\prec_{\tilde \emm} \El((\G_1\times\G_2)\cap h(\widehat G_1)h^{-1})$ for some $h\in \times_{i=1}^{l+k} G_i$. We can see that $(\G_1\times\G_2)\cap h(\widehat G_1)h^{-1}=(1\times\widehat N_1)\times\G_2$. SO we have $\mathcal S_{i_1}\prec_{\tilde \emm}\El(\widehat N_1\times\G_2)=\El(\widehat N_1)\bten\El(\G_2)$. Running the similar argument for $j=2$ we obtain that $\mathcal S_{i_2}\prec_{\tilde \emm}\El(\widehat N_2\times\G_2)=\El(\widehat N_2)\bten\El(\G_2)$. We now show that $i_1=i_2$. Assume by contradiction that we have $i_1\neq i_2$. Then without loss of generality we have that, $\mathcal S_1\prec_{\tilde \emm}\El(\widehat N_1)\bten\El(\G_2)$ and $\mathcal S_2\prec_{\tilde \emm}\El(\widehat N_2)\bten\El(\G_2)$. Hence we have $\mathcal S_i\prec_{\tilde{\emm}}\El(N)\bten\El(\G_2)$. Note that $\enn_{\tilde{\emm}}(\mathcal S_i)'\cap\tilde{\emm}\subseteq (\mathcal S_1\vee\mathcal S_2)'\cap\tilde{\emm}=\emm'\cap\tilde{\emm}=\mathbb C. $ As $\mathcal S_1,\mathcal S_2$ are commuting factors and $\El(N)\bten\El(\G_2)$ is regular in $\tilde{\emm}$, by \cite[Lemma~2.6]{Is16} we get that $\mathcal S_1\vee\mathcal S_2=\emm\prec_{\tilde{\emm}}\El(N)\bten\El(\G_2)$. We can now apply \cite[Lemma~2.4]{CDK19} to conclude that $\El(\G_1)\prec_{\El(\G_1)}\El(N)$. This is a contradiction to \cite[Proposition~2.3]{CD18} as $[\G_1:N]=\infty$. Similarly we can show that for every $j\in\overline{1,l}$ we have $i_l=i_1$. Hence we have some $i\in\{1,2\}$ such that $\mathcal S_i\prec_{\tilde{\emm}}\El(\widehat N_j)\bten\El(\G_2)$ for all $j\in\overline{1,l}$. Since we have $\enn_{\tilde{\emm}}(\mathcal S_i)'\cap\tilde{\emm=\mathbb C}$, using \cite[Lemma~2.4(2)]{DHI16} we can conclude that $\mathcal S_i\prec^s_{\tilde{\emm}}\El(\widehat N_j)\bten\El(\G_2)$ for all $j\in\overline{1,l}$. Using \cite[Lemma~2.8(2)]{DHI16} we further conclude that $\mathcal S_i\prec^s_{\tilde{\emm}} \underset{j\in\overline{1,l}}{\bigcap}(\El(\widehat N_j)\bten\El(\G_2))=\El(\G_2)$. Finally by \cite[Lemma~2.6]{CDK19} we conclude that $\mathcal S_i\prec_{ \emm}\El(\G_2)$. 
\end{proof} 

\begin{corollary}\label{upffrom_s_mn}
	Let $\G_1,\G_2\in \underset{m,n\geq 2}{\bigcup} \mathscr{S}_{m,n}\cup \mathscr{V}$. Then $\mathcal F(\El(\G_1\times\G_2))=\{1\}$.
\end{corollary}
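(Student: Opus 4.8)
The plan is to read off Corollary~\ref{upffrom_s_mn} from the unique prime factorization theorems of this section together with the primeness and fundamental-group computations already in hand. Write $\emm=\El(\G_1\times\G_2)=\El(\G_1)\bten\El(\G_2)$. The point is that $\emm$ is a tensor product $\mathcal P_1\bten\cdots\bten\mathcal P_\ell$ of prime II$_1$ factors, each of which has trivial fundamental group: when $\G_i\in\mathscr S_{m,n}$ the factor $\El(\G_i)$ is a single prime piece by Corollary~\ref{primeSm,n}, with $\mathcal F(\El(\G_i))=\{1\}$ by Corollary~\ref{trivialfundamentalgroup_S_m,n}, while when $\G_i\in\mathscr V$, writing $\G_i$ as a product of the building blocks $G_n$, each $\El(G_n)$ is prime with trivial fundamental group by \cite{CDHK20}. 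Moreover $\emm$ enjoys unique prime factorization by Theorem~\ref{upfS} (both factors in $\mathscr S_{m,n}$), by Theorem~\ref{UPFSV} (one factor in each class), and by \cite{D20} (both in $\mathscr V$). Thus the hypotheses of the standard ``trivial fundamental group from UPF'' mechanism are met.

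To run that mechanism I would take $t\in\mathcal F(\emm)$, so there is a $\ast$-isomorphism $\emm\cong\emm^t$, and use that amplification distributes through a single tensor factor: from $\emm=\mathcal P_1\bten\cdots\bten\mathcal P_\ell$ we get $\emm^t=\mathcal P_1^t\bten\mathcal P_2\bten\cdots\bten\mathcal P_\ell$, so transporting along $\emm^t\cong\emm$ produces a second prime factorization of $\emm$ in which exactly one tensor factor is amplified by $t$ and the rest are unchanged. Feeding both factorizations into the unique prime factorization statement yields a permutation $\sigma$, amplification parameters $c_1,\dots,c_\ell>0$ with $\prod_i c_i=1$ (trace preservation), and a unitary $u\in\emm$ such that $u\mathcal P_i^{c_i}u^\ast$ equals the $\sigma(i)$-th transported factor for every $i$.

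It remains to extract $t=1$. Matching the two factorizations factor by factor gives $\mathcal P_i^{c_i}\cong\mathcal P_{\sigma(i)}^{t_{\sigma(i)}}$, where $t_{j}=1$ for all but one index $j_0$ and $t_{j_0}=t$. Since each $\mathcal P_j$ is prime with $\mathcal F(\mathcal P_j)=\{1\}$, the permutation $\sigma$ can only move a piece to a stably isomorphic one, say $\mathcal P_i\cong\mathcal P_{\sigma(i)}^{\theta_i}$, and triviality of the fundamental group pins down $c_i=t_{\sigma(i)}/\theta_i$. Running around each cycle of $\sigma$ and invoking triviality of the fundamental group once more forces the product of the $\theta_i$ along that cycle to be $1$, whence $\prod_i\theta_i=1$. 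Combining, $1=\prod_i c_i=\big(\prod_j t_j\big)/\prod_i\theta_i=t$, so $t=1$ and $\mathcal F(\emm)=\{1\}$.

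The genuinely delicate step is this last bookkeeping when the prime pieces are \emph{not} pairwise non-stably-isomorphic, e.g.\ when $\G_1\cong\G_2$ or when a building block is repeated inside a $\mathscr V$-factor; then one must track the permutation $\sigma$ and the stable-isomorphism parameters $\theta_i$ carefully rather than simply reading off a factor-by-factor identity. This is exactly the computation carried out in \cite{D20}, which the plan invokes; everything else reduces to the UPF results of this section and the triviality of the fundamental groups of the prime building blocks recalled above.
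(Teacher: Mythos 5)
Your proposal is correct and rests on the same mechanism as the paper's proof: unique prime factorization of $\El(\G_1\times\G_2)$ combined with triviality of the fundamental groups of the constituent pieces. The two arguments diverge only in the bookkeeping, but the divergence is worth recording. The paper first disposes of the case $\G_1,\G_2\in\mathscr V$ by citing \cite[Theorem~5.2]{CDHK20}, and otherwise works with the two-piece identity $\El(\G_1)\bten\El(\G_2)=\El(\G_1)^t\bten\El(\G_2)$; Theorems~\ref{upfS} and~\ref{UPFSV} then yield either a matched pairing ($\El(\G_1)\simeq\El(\G_1)^{st}$ and $\El(\G_2)\simeq\El(\G_2)^{1/s}$), which is killed directly by Corollary~\ref{trivialfundamentalgroup_S_m,n}, or a swapped pairing ($\El(\G_1)\simeq\El(\G_2)^{s}$), for which the paper invokes the strong rigidity statement Theorem~\ref{mainthm} to force $s=1$ and $t/s=1$. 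You instead pass to the full decomposition into prime building blocks and dispose of swapped or repeated pieces by multiplying the stable-isomorphism parameters $\theta_i$ around each cycle of the permutation, which correctly yields $\prod_i\theta_i=1$ from $\mathcal F(\mathcal P_j)=\{1\}$ alone. This buys a uniform treatment of all cases (including both groups in $\mathscr V$) and avoids any appeal to Theorem~\ref{mainthm}; the cost is that you need the $\ell$-piece form of unique prime factorization with the normalization $\prod_i c_i=1$, which is not literally what Theorems~\ref{upfS} and~\ref{UPFSV} assert (they are proved for two-fold tensor decompositions), though it does follow by the inductive argument of \cite{D20} that you cite. Both routes are sound.
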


\begin{proof}
	Note that when $\G_1,\G_2\in\mathscr{V}$, this case is already covered by \cite[Theorem~5.2]{CDHK20}. Hence the only new case is when $\G_1\in \underset{m,n\geq 2}{\bigcup} \mathscr{S}_{m,n}\cup \mathscr{V}$ and $\G_2\in\underset{m,n\geq 2}{\bigcup} \mathscr{S}_{m,n}$. Let $t\in\mathcal{F}(\El(\G_1\times\G_2))$. Then $\El(\G_1)\bten\El(\G_2)=\El(\G_1)^t\bten\El(\G_2)$. Now by Theorem~\ref{upfS} and Theorem~\ref{UPFSV} we get either 
	\begin{enumerate}
		\item[i.] $\El(\G_1)\simeq\El(\G_1)^{st}$ and $\El(\G_2)\simeq\El(\G_2)^{\frac{1}{s}}$ or,
		\item[ii.] $\El(\G_1)\simeq\El(\G_2)^{s}$ and $\El(\G_2)\simeq\El(\G_1)^{\frac{t}{s}}$.  
	\end{enumerate}
If i. holds, then we get $st=1$ and $1/s=1$ from the fact that $\mathcal (\El(\G_i))=\{1\}$ for $i=1,2$. This then implies that $t=1$. If ii. holds then by Theorem~\ref{mainthm} we get that $s=1$ and $t/s=1$, which implies that $t=1$.
\end{proof}

\section{Cartan-rigidity for von Neumann algebras of groups in Class $\mathscr{S}_{m,n}$ and products of relative hyperbolic groups}
In this last section we classify the Cartan subalgebras in II$_1$ factors associated with the groups in class $\mathscr S_{m,n}$ and products of groups that are hyperbolic relative to a collection of residually finite subgroups, and their free ergodic pmp actions on probability spaces (see Theorem~\ref{uniquecartan1}, Theorem~\ref{uniquecartantheorem}, and Corollary~\ref{cor:nocartan}). Our proofs rely in an essential way on the methods introduced in \cite{PV12}, \cite{CIK13} and \cite{CDK19} as well as on the group theoretic Dehn filling of relative hyperbolic groups. For convenience we include detailed proofs.

We record the following intertwining lemma that follows from \cite[Theorem~7.1]{CDK19}. We give all the details for the readers' convenience.
\begin{theorem}\label{uniquecartantheorem}
	Let $Q=Q_1\times Q_2\times \cdots\times Q_m$ for some integer $m\geq 2$ where $Q_i$ are residually finite groups. For every $i\in\overline{1,n}$ let $\G_i=N_i\rtimes_{\sg_i}Q\in\mathcal Rip(Q)$ and denote by $\G=(N_1\times N_2\times\cdots\times N_n)\rtimes_{\sg}Q$ the semidirect product associated with the diagonal action $\sg=(\sg_1,\sg_2,\cdots,\sg_n):Q\rightarrow Aut(N_1\times N_2\times\cdots\times N_n)$ for $n\geq 2$. Then the following hold:

	Let $\mathcal P$ be a von Neumann algebra together with an action $\G\curvearrowright \mathcal P$ and denote $\mathcal P\rtimes \G$ by $\emm$. Let $p\in\emm$ be a projection and let $\mathcal A\subseteq p\emm p$ be a masa whose normalizer $\mathscr N_{p\emm p}(A)''\subseteq p\emm p$ has finite index. Then $\mathcal A\prec_{ \emm} \mathcal P$.

\end{theorem}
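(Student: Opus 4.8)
The plan is to establish this as a relative version of strong solidity, combining the Popa--Vaes deformation/rigidity dichotomy for weakly amenable bi-exact groups with the relatively hyperbolic structure of the Rips pieces and the almost malnormality of $Q$, following the template of \cite[Theorem~7.1]{CDK19} and the methods of \cite{PV12,CIK13}. Write $\emm=\mathcal P\rtimes\G$ and recall from Notation~\ref{semidirectt} that $\G=(N_1\times\cdots\times N_n)\rtimes Q$ sits diagonally inside $\hat\G:=\G_1\times\cdots\times\G_n$, where each $\G_k=N_k\rtimes Q$ is hyperbolic relative to $\{Q\}$. Since the $Q_i$ are weakly amenable and relative hyperbolicity with weakly amenable peripherals preserves $\Lambda_{cb}=1$, the group $\G$ is weakly amenable, and the residual finiteness of $Q$ makes group-theoretic Dehn filling of the $\G_k$ available, which is exactly what powers the relative Popa--Vaes dichotomy in each product direction. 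Finally $\mathcal A$, being abelian, is amenable and hence amenable relative to $\mathcal P$, so the hypotheses needed to run \cite[Theorem~1.4]{PV12} are in place. Set $\mathcal R:=\mathscr N_{p\emm p}(\mathcal A)''$.

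First I would run the per-coordinate dichotomy, the amenable analogue of Theorem~\ref{controlprodpropt2}. Fixing $k\in\overline{1,n}$ and exploiting bi-exactness of $\G$ in the $k$-th direction relative to the subgroup $\widehat N_k\rtimes Q=\G\cap(\widehat\G_k\times Q)$, an application of \cite[Theorem~1.4]{PV12} (transported from $\hat\G$ to $\G$ via the intersection computation preceding it, using \cite[Lemma~2.3]{CDK19}) yields for each $k$ that either $\mathcal A\prec_\emm\mathcal P\rtimes(\widehat N_k\rtimes Q)$, or $\mathcal R$ is amenable relative to $\mathcal P\rtimes(\widehat N_k\rtimes Q)$. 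The second alternative is ruled out by the standing hypothesis: since $[\,p\emm p:\mathcal R\,]<\infty$, relative amenability of $\mathcal R$ propagates to $\emm$, forcing $\G$ to be amenable relative to $\widehat N_k\rtimes Q$; but the coset space $\G/(\widehat N_k\rtimes Q)$ is $N_k$ as a free $N_k$-set, so this would make the property (T), infinite group $N_k$ amenable, a contradiction. Hence the first alternative holds for every $k$. Upgrading each intertwining to its strong form by \cite[Lemma~2.4]{DHI16} (the normalizer $\mathcal R$ being of finite index kills the exceptional central projection) and intersecting over $k$ through \cite[Lemma~2.8(2)]{DHI16}, I obtain
\[
\mathcal A\prec^s_\emm \mathcal P\rtimes\Big(\bigcap_{k=1}^n(\widehat N_k\rtimes Q)\Big)=\mathcal P\rtimes Q .
\]

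It remains to descend from $\mathcal P\rtimes Q$ to the base $\mathcal P$, which is the genuinely rigid step since $Q$ has property (T) and resists any amenability argument. Here I would argue by contradiction: suppose $\mathcal A\nprec_\emm\mathcal P$. Because $Q<\G$ is almost malnormal, Lemma~\ref{malnormalcontrol} applies to the image of $\mathcal A$ inside a corner of $\mathcal P\rtimes Q$ and shows that the one-sided quasinormalizer of $\mathcal P\rtimes Q$ in $\emm$ attached to this non-intertwining corner is trapped inside $\mathcal P\rtimes Q$. Combined with $\mathcal A\prec^s_\emm\mathcal P\rtimes Q$ and $\mathscr{QN}^{(1)}_{p\emm p}(\mathcal A)''=p\emm p$ (from \cite[Lemma~3.5]{Po03}, using regularity of $\mathcal A$), this forces $\mathcal R=p\emm p\prec_\emm\mathcal P\rtimes Q$, hence $\emm\prec_\emm\mathcal P\rtimes Q$; exactly as in the proof of Theorem~\ref{core-actinggp} this entails $[\G:Q]<\infty$, contradicting $N\neq\{1\}$. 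Therefore $\mathcal A\prec_\emm\mathcal P$, as desired.

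The main obstacle is precisely this last descent together with the input it relies on: the relative Popa--Vaes dichotomy only locates $\mathcal A$ in the parabolic algebra $\mathcal P\rtimes Q$, and both the availability of that dichotomy for the relatively hyperbolic $\G_k$ (which is where Dehn filling and the residual finiteness of $Q$ are indispensable) and the final stripping of the property (T) factor $Q$ via almost malnormality must be carried out without any help from the action $\G\curvearrowright\mathcal P$, which is completely arbitrary and supplies no mixing or spectral-gap input. A secondary technical point is the careful bookkeeping required to upgrade the per-coordinate intertwinings to $\prec^s$ and to intersect them, for which the finite-index hypothesis on $\mathcal R$ is used repeatedly.
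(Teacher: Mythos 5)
Your proposal has a genuine gap at its very first step. The relative Popa--Vaes dichotomy you invoke (bi-exactness of $\G$ relative to the subgroups $\widehat N_k\rtimes Q$, plus weak amenability of $\G$) is not available under the stated hypotheses: the theorem assumes only that the $Q_i$ are \emph{residually finite}, not weakly amenable or bi-exact. Your justification ``the $Q_i$ are weakly amenable, hence $\G$ is weakly amenable'' imports the standing assumptions of Theorem~\ref{semidirectprodreconstruction}, which are deliberately absent here. Without weak amenability of $\G$, \cite[Theorem~1.4]{PV12} cannot be applied to $\emm=\mathcal P\rtimes\G$ relative to $\mathcal P\rtimes(\widehat N_k\rtimes Q)$, so you never reach the intermediate target $\mathcal P\rtimes Q$. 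This is precisely why the paper does not argue inside $\G$ at all: it uses residual finiteness of $Q$ to run group-theoretic Dehn filling (\cite[Corollary~5.1]{CIK13}), producing for each $i$ a short exact sequence $1\to K_1\ast K_2\to\G_i\to H_i\to 1$ with $H_i$ a non-elementary \emph{hyperbolic} group. The PV12 dichotomy is then applied to the honest hyperbolic group $H_i$ via the embedding $\Delta^{\rho_i}:\emm\to\emm\,\bar\otimes\,\El(H_i)$ (where weak amenability and bi-exactness hold unconditionally), and the Ioana--Vaes free-product dichotomy is applied to $K_1\ast K_2$ via $\Delta^{\pi_i}$. This lands $\mathcal A$ in $\mathcal P\rtimes N_{\hat i}$ for each $i$, and intersecting over $i$ via \cite[Lemma~2.8(2)]{DHI16} gives $\mathcal A\prec_\emm\mathcal P$ directly --- no stripping of the property (T) group $Q$ is ever needed.

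Two secondary problems would remain even if the first step were granted. First, your descent from $\mathcal P\rtimes Q$ to $\mathcal P$ asserts $\mathscr{QN}^{(1)}_{p\emm p}(\mathcal A)''=p\emm p$ ``using regularity of $\mathcal A$'', but $\mathcal A$ is not assumed regular, only that $\mathscr N_{p\emm p}(\mathcal A)''$ has finite index; the quasinormalizer algebra of $\mathcal A$ is only known to contain that finite-index subalgebra (one would have to rework the argument around $\mathcal R$ rather than $\mathcal A$, and the quasinormalizer control of Lemma~\ref{malnormalcontrol} applies to the image $\psi(a\mathcal Aa)$, not to $\mathcal R$ directly). Second, the claim that relative amenability of the finite-index subalgebra $\mathcal R$ ``propagates to $\emm$'' and then forces amenability of $N_k$ needs the translation results of \cite[Propositions~3.4, 3.5]{CIK13}, which in the paper are set up for the comultiplication-type embeddings $\Delta^{\rho}$ rather than for the ambient inclusion $\mathcal P\rtimes\Sigma\subseteq\mathcal P\rtimes\G$. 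Neither of these is fatal in itself, but together with the missing dichotomy they mean the proposal does not constitute a proof of the theorem as stated.
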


\begin{proof}
	Since $\Gamma_i =N_i\rtimes Q$ is hyperbolic relative to a residually finite group $Q$, then by \cite[Corollary~5.1]{CIK13} there exist a non-elementary hyperbolic group $H_i$, a subset $T_i\subseteq N_i$ with $|T_i|\geq 2$ and a normal subgroup $R_i\lhd Q$ of finite index such that we have a short exact sequence $$1\rightarrow \ast_{t\in T_i}R_i^t\hookrightarrow \Gamma_i\overset{\varepsilon_i}{\twoheadrightarrow} H_i\rightarrow 1. $$ 
	In particular there are infinite groups $K_1,K_2$ so that $\ast_{t\in T_i}R_i^t=K_1\ast K_2$.
	
	Denote by $\pi_i: \Gamma\twoheadrightarrow \Gamma_i $ the canonical projection given by $\pi_i((n_1,n_2,\cdots,n_k) q)=n_i q$ for all $k\geq i$, for all $(n_1,n_2,\cdots,n_k)q\in \Gamma^k\leq \G $. Then for every $i\in\mathbb N$ consider the epimorphism  $\rho_i= \varepsilon_i\circ \pi_i: \Gamma \rightarrow H_i$.  
	Following \cite[Section 3]{CIK13} consider the $\ast$-embedding $\Delta^{\rho_i}: \emm \rightarrow \emm \bar\otimes \El(H_i):=\tilde \emm_i$ given by $\Delta^{\rho_i}(xu_g )= x u_g \otimes v_{\rho_i(g)}$ for all $x\in \emm$, $g\in \Gamma$. Here $(u_g)_{g\in \Gamma}$ and $(v_{h})_{h\in H_i}$ are the canonical group unitaries in $\mathcal P\rtimes \Gamma$ and $\El(H_i)$, respectively.  As $\mathcal A$ is amenable, \cite[Theorem 1.4]{PV12} implies either a) $\Delta^{\rho_i}(\Aa)\prec_{\tilde \emm_i} \emm\otimes 1$ or b) the normalizer  $\Delta^{\rho_i}(\mathscr N_{p\emm p}(\Aa)'')$ is amenable relative to $\emm \otimes 1$ inside $\tilde \emm_i$. Assume b) holds. As $\mathscr N_{p\emm p}(\Aa)''\subseteq p \emm p$ has finite index it follows that $\Delta^{\rho_i}(p\mathcal M p)$ is amenable relative to $\emm \otimes 1$ inside $\tilde \emm_i$. However, using \cite[Proposition 3.5]{CIK13} this further entails that $H_i$ is amenable, a contradiction. Thus a) must hold and using \cite[Proposition 3.4]{CIK13} we get that $\Aa \prec_\emm \mathcal P\rtimes \ker(\rho_i)$. Let $\mathcal N=\mathcal P \rtimes \ker(\rho_i)$ and using \cite[Proposition 3.6]{CIK13} we can find a projection $0\neq q\in \mathcal N$, a masa $\mathcal B\subset q \mathcal N q$ with $\mathcal Q:=\mathscr N_{q \mathcal N q}(\mathcal B)''\subseteq q\mathcal N q$ has finite index. In addition one can find  projections $0\neq p_0 \in \Aa$, $0\neq q'_0\in \mathcal B'\cap p\emm p$ and a unitary $u\in \emm$ such that $u(\Aa p_0 )u^*=\mathcal B p_0$.
	
	Note that the restriction homomorphism $\pi_i: \ker(\rho_i)\rightarrow K_1\ast K_2$ is an epimorphism with $\ker(\pi_i)= N_{\hat i}$. Consider the $\ast$-embedding  $\Delta^{\pi_i}: \mathcal N \rightarrow \mathcal N \bar\otimes \El(K_1\ast K_2)$ given by given by $\Delta^{\pi_i}(xu_g )= x u_g \otimes v_{\pi_i(g)}$ for all $x\in \mathcal P$, $g\in \ker(\rho_i)$. Denote by $\tilde {\mathcal N}_i:= \mathcal N \bar\otimes \El(\ker(\rho_i))$. Also fix $0\neq z\in \mathcal Z (\mathcal Q'\cap q\mathcal N q)$. Since $\Delta^{\pi_i}(\mathcal B z)\subset \mathcal N \bar \otimes \El(K_1\ast K_2)$ is amenable then using $\cite{Io12,Va13}$ one of the following must hold: 
	\begin{enumerate}
		\item[c)] $\Delta^{\pi_i}(\mathcal Q z)$ is amenable relative to $\mathcal N \otimes 1$ inside $\tilde {\mathcal N}_i$;
		\item[d)] $ \Delta^{\pi_i}(\mathcal Q z)  \prec_{\tilde {\mathcal N}_i} \mathcal N \bar\otimes \El(K_j)$ for some $j=1,2$;
		\item[e)] $\Delta^{\pi_i}(\mathcal B z) \prec_{\tilde {\mathcal N}_i} \mathcal N \otimes 1$.
	\end{enumerate}
	
	Assume c) holds. As $\mathcal Q\subseteq q\mathcal N q$ is finite index so is $\mathcal Q z\subseteq z\mathcal N z$ and \cite[Lemma 2.4]{CIK13} implies that $z\mathcal N z \prec^s \mathcal Q z$ and using \cite[Proposition 2.3 (3)]{OP07} we get that $\Delta^{\pi_i}(z\mathcal N z)$ is amenable relative $\mathcal N \otimes 1$ inside $\tilde {\mathcal N}_i$. Thus \cite[Proposition 3.5]{CIK13} implies that $K_1\ast K_2$ is amenable, a contradiction.
	Assume d) holds. By \cite[Proposition 3.4]{CIK13} we have that $ \mathcal Q z  \prec \mathcal P \rtimes (\pi_i )^{-1}(K_j)$ and using \cite[Lemma 2.4 (3)]{DHI16} one can find a projection $0\neq r\in \mathcal Z(\mathcal Q z'\cap z\mathcal N z)$ such that $\mathcal Q r \prec^s \mathcal P \rtimes  (\pi_i )^{-1}(K_j)$. Since $\mathcal Q z \subseteq z\mathcal N z$ is finite index then so is $\mathcal Q r \subseteq r \mathcal N r$ and thus $r\mathcal N r \prec_{\mathcal N} \mathcal Q r$. Therefore using \cite[Lemma 2.4(1)]{DHI16} (or \cite[Remark 3.7]{Va07}) we conclude that $\mathcal N\prec  \mathcal P \rtimes (\pi_i )^{-1}(K_j)$. However this implies that $\pi^{-1}(K_j) \leqslant \ker(\rho_i)$ is finite index, a contradiction. Hence e) must hold and using \cite[Proposition 3.4]{CIK13} we further get that $\mathcal B z \prec_{\mathcal N} \mathcal P\rtimes N_{\hat i}$. Since this holds for all $z$ we conclude that $\mathcal B \prec^s_{\mathcal N} \mathcal P\rtimes N_{\hat i}$. This combined with the prior paragraph clearly implies that $\Aa \prec \mathcal P \rtimes N_{\hat i}$. 
	
	Using \cite[Lemma~2.8(2)]{DHI16} we get the conclusion that $\Aa \prec_\emm \mathcal P$.
\end{proof}

We now record the following intertwining theorem for products of groups that are hyperbolic relative to a family of  residually finite subgroups. Note that the following theorem can be deduced directly from \cite{CIK13}.

\begin{theorem}\label{uniquecartan1} Let $\G=\G_1\times\G_2\times\cdots\times \G_n$, where $\G_i$ are hyperbolic relative to a family of residually finite subgroups for all $i\in\{1,n\}$. Let $\mathcal P$ be a von Neumann algebra together with an action $\Gamma\ca \mathcal P$ and denote by $\mathcal M = \mathcal P \rtimes \Gamma$. Let $p\in \emm$ be a projection and let $\mathcal A\subset p\emm p$ be a masa whose normalizer $\mathscr N_{p\emm p}(\mathcal A)''\subseteq p \emm p$ has finite index.  Then  $\mathcal A \prec_{\mathcal M} \mathcal P$.     
\end{theorem}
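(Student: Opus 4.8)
The plan is to reduce everything to a single-factor intertwining and then intersect. Write $\widehat{\G}_i = \times_{j\neq i}\G_j$ for $i \in \overline{1,n}$. I claim it suffices to establish $\mathcal A \prec^s_{\emm} \mathcal P \rtimes \widehat{\G}_i$ for every $i$: since $\bigcap_{i} \widehat{\G}_i = \{e\}$, an application of \cite[Lemma~2.8(2)]{DHI16} then yields $\mathcal A \prec^s_{\emm} \bigcap_i(\mathcal P \rtimes \widehat{\G}_i) = \mathcal P$, which is the assertion of the theorem. So I fix $i$ and work to produce this one intertwining.

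Since $\G_i$ is hyperbolic relative to a residually finite family, the Dehn filling result \cite[Corollary~5.1]{CIK13} supplies a non-elementary hyperbolic group $H_i$ together with an epimorphism $\varepsilon_i : \G_i \twoheadrightarrow H_i$ whose kernel is a free product $K_1 \ast K_2$ of two infinite groups. Composing with the coordinate projection $\pi_i : \G \twoheadrightarrow \G_i$ gives $\rho_i = \varepsilon_i \circ \pi_i$, with $\ker\rho_i = \ker\varepsilon_i \times \widehat{\G}_i$, and following \cite[Section~3]{CIK13} I form the $\ast$-embedding $\Delta^{\rho_i} : \emm \to \emm \bten \El(H_i)$, $\Delta^{\rho_i}(xu_g) = xu_g \otimes v_{\rho_i(g)}$. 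Because $H_i$ is biexact and weakly amenable and $\mathcal A$ is abelian, \cite[Theorem~1.4]{PV12} forces either $\Delta^{\rho_i}(\mathcal A)\prec \emm \otimes 1$ or $\Delta^{\rho_i}(\mathscr N_{p\emm p}(\mathcal A)'')$ amenable relative to $\emm\otimes 1$. In the latter case the finite-index hypothesis propagates amenability to $\Delta^{\rho_i}(p\emm p)$, whence \cite[Proposition~3.5]{CIK13} makes $H_i$ amenable---a contradiction. So the former holds, and \cite[Proposition~3.4]{CIK13} gives $\mathcal A \prec_{\emm} \mathcal P \rtimes \ker\rho_i$.

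It remains to strip the free factor $\ker\varepsilon_i$. Passing to a corner with \cite[Proposition~3.6]{CIK13}, I obtain inside $\enn = \mathcal P \rtimes \ker\rho_i$ a masa $\mathcal B$ with finite-index normalizer that absorbs a corner of $\mathcal A$; since the restriction $\pi_i : \ker\rho_i \twoheadrightarrow K_1 \ast K_2$ has kernel exactly $\widehat{\G}_i$, I repeat the argument with the second embedding $\Delta^{\pi_i} : \enn \to \enn \bten \El(K_1 \ast K_2)$, now invoking the free-product dichotomy of \cite{Io12,Va13}. The amenable alternative again yields amenability of $K_1 \ast K_2$ (contradiction), the alternative $\prec \enn \bten \El(K_j)$ would force $\pi_i^{-1}(K_j)$ to be finite-index in $\ker\rho_i$ (impossible), so the surviving alternative together with \cite[Proposition~3.4]{CIK13} gives $\mathcal B \prec^s_{\enn} \mathcal P \rtimes \widehat{\G}_i$. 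Transferring back through the corner and upgrading by \cite[Lemma~2.4]{DHI16} yields $\mathcal A \prec^s_{\emm} \mathcal P \rtimes \widehat{\G}_i$, completing the reduction. The step I expect to be most delicate is guaranteeing that residual finiteness of the \emph{whole} peripheral family---not just one parabolic---produces a Dehn-filling kernel with at least two infinite free factors, so that the free-product dichotomy genuinely applies; the accompanying bookkeeping of the two successive comultiplications, first along $H_i$ and then along $K_1 \ast K_2$, must correctly isolate $\widehat{\G}_i$ as the final kernel.
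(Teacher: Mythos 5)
Your proposal follows essentially the same route as the paper's proof: the same Dehn-filling input from \cite[Corollary~5.1]{CIK13}, the same two successive embeddings $\Delta^{\rho_i}$ and $\Delta^{\pi_i}$ with the \cite{PV12} dichotomy and the free-product dichotomy of \cite{Io12,Va13}, the same exclusion of the amenable and single-free-factor alternatives, and the same final intersection over $i$ via \cite[Lemma~2.8(2)]{DHI16}. The step you flag as delicate is exactly what \cite[Corollary~5.1]{CIK13} delivers (a peripheral filling with $|T_i|\geq 2$, hence two infinite free factors), so the argument is complete as written.
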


\begin{proof} We are going to prove the statement only for the case when $\G_i$ is hyperbolic to a residually finite subgroup $Q_i$ for every $i\in\{1,n\}.$ General case follows exactly same way and does not hide any technicality. Since $\Gamma_i$ is hyperbolic relative to a residually finite group $Q_i$, then by Theorem \cite[Corollary~5.1]{CIK13} there exist a non-elementary hyperbolic group $H_i$, a subset $T_i\subseteq N_i$ with $|T_i|\geq 2$ and a normal subgroup $R_i\lhd Q_i$ of finite index such that we have a short exact sequence $$1\rightarrow \ast_{t\in T_i}R_i^t\hookrightarrow \Gamma_i\overset{\varepsilon_i}{\twoheadrightarrow} H_i\rightarrow 1. $$ 
	In particular there are infinite groups $K_1,K_2$ so that $\ast_{t\in T_i}R_i^t=K_1\ast K_2$.
	
	Denote by $\pi_i: \Gamma\twoheadrightarrow \Gamma_i $ the canonical projection given by $\pi_i((g_1,g_2,\cdots, g_n))=g_i $, for all $(g_1,g_2,\cdots,g_n)\in \Gamma $. Then for every $i=\{1,n\}$ consider the epimorphism  $\rho_i= \varepsilon_i\circ \pi_i: \Gamma \rightarrow H_i$.  
	Following \cite[Section 3]{CIK13} consider the $\ast$-embedding $\Delta^{\rho_i}: \emm \rightarrow \emm \bar\otimes \El(H_i):=\tilde \emm_i$ given by $\Delta^{\rho_i}(xu_g )= x u_g \otimes v_{\rho_i(g)}$ for all $x\in \emm$, $g\in \Gamma$. Here $(u_g)_{g\in \Gamma}$ and $(v_{h})_{h\in H_i}$ are the canonical group unitaries in $\cp\rtimes \Gamma$ and $\El(H_i)$, respectively.  As $\mathcal A$ is amenable, \cite[Theorem 1.4]{PV12} implies either a) $\Delta^{\rho_i}(\mathcal A)\prec_{\tilde \emm_i} \emm\otimes 1$ or b) the normalizer  $\Delta^{\rho_i}(\mathscr N_{p\emm p}(\mathcal A)'')\lessdot_{\tilde{\emm}_i}\emm \otimes 1$. Assume b) holds. As $\mathscr N_{p\emm p}(\mathcal A)''\subseteq p \emm p$ has finite index it follows that $\Delta^{\rho_i}(p\mathcal M p)$ is amenable relative to $\emm \otimes 1$ inside $\tilde \emm_i$. However, using \cite[Proposition 3.5]{CIK13} this further entails that $H_i$ is amenable, a contradiction. Thus a) must hold and using \cite[Proposition 3.4]{CIK13} we get that $\mathcal A \prec_\emm \cp\rtimes \ker(\rho_i)$. Let $\mathcal N=\cp \rtimes \ker(\rho_i)$ and using \cite[Proposition 3.6]{CIK13} we can find a projection $0\neq q\in \mathcal N$, a masa $\mathcal B\subset q \mathcal N q$ with $\mathcal Q:=\mathscr N_{q \mathcal N q}(\mathcal B)''\subseteq q\mathcal N q$ has finite index. In addition one can find  projections $0\neq p_0 \in \mathcal A$, $0\neq q'_0\in \mathcal B'\cap p\emm p$ and a unitary $u\in \emm$ such that $u(\mathcal A p_0 )u^*=\mathcal B p_0$.
	To this end observe the restriction homomorphism $\pi_i: \ker(\rho_i)\rightarrow K_1\ast K_2$ is an epimorphism with $\ker(\pi_i)= \G_{\hat i}$. As before, consider the $\ast$-embedding  $\Delta^{\pi_i}: \mathcal N \rightarrow \mathcal N \bar\otimes \El(K_1\ast K_2)$ given by given by $\Delta^{\pi_i}(xu_g )= x u_g \otimes v_{\pi_i(g)}$ for all $x\in \cp$, $g\in \ker(\rho_i)$. Denote by $\tilde {\mathcal N}_i:= \mathcal N \bar\otimes \El(\ker(\rho_i))$. Also fix $0\neq z\in \mathcal Z (\mathcal Q'\cap q\mathcal N q)$. Since $\Delta^{\pi_i}(\mathcal B z)\subset \mathcal N \bar \otimes \El(K_1\ast K_2)$ is amenable then using \cite{Io12,Va13} one of the following must hold: c) $\Delta^{\pi_i}(\mathcal Q z)$ is amenable relative to $\mathcal N \otimes 1$ inside $\tilde {\mathcal N}_i$; d) $ \Delta^{\pi_i}(\mathcal Q z)  \prec_{\tilde {\mathcal N}_i} \mathcal N \bar\otimes \El(K_j)$ for some $j=1,2$; e) $\Delta^{\pi_i}(\mathcal B z) \prec_{\tilde {\mathcal N}_i} \mathcal N \otimes 1$.
	
	Assume c) holds. As $\mathcal Q\subseteq q\mathcal N q$ is finite index so is $\mathcal Q z\subseteq z\mathcal N z$ and \cite[Lemma 2.4]{CIK13} implies that $z\mathcal N z \prec^s \mathcal Q z$ and using \cite[Proposition 2.3 (3)]{OP07} we get that $\Delta^{\pi_i}(z\mathcal N z)$ is amenable relative $\mathcal N \otimes 1$ inside $\tilde {\mathcal N}_i$. Thus \cite[Proposition 3.5]{CIK13} implies that $K_1\ast K_2$ is amenable, a contradiction.
	Assume d) holds. By \cite[Proposition 3.4]{CIK13} we have that $ \mathcal Q z  \prec \mathcal P \rtimes (\pi_i )^{-1}(K_j)$ and using \cite[Lemma 2.4 (3)]{DHI16} one can find a projection $0\neq r\in \mathcal Z(\mathcal Q z'\cap z\mathcal N z)$ such that $\mathcal Q r \prec^s \cp \rtimes  (\pi_i )^{-1}(K_j)$. Since $\mathcal Q z \subseteq z\mathcal N z$ is finite index then so is $\mathcal Q r \subseteq r \mathcal N r$ and thus $r\mathcal N r \prec_{\mathcal N} \mathcal Q r$. Therefore using \cite[Lemma 2.4(1)]{DHI16} (or \cite[Remark 3.7]{Va07}) we conclude that $\mathcal N\prec  \mathcal P \rtimes (\pi_i )^{-1}(K_j)$. However this implies that $\pi^{-1}(K_j) \leqslant \ker(\rho_i)$ is finite index, a contradiction. Hence e) must hold and using \cite[Proposition 3.4]{CIK13} we further get that $\mathcal B z \prec_{\mathcal N} \cp\rtimes \G_{\hat i}$. Since this holds for all $z$ we conclude that $\mathcal B \prec^s_{\mathcal N} \cp\rtimes \G_{\hat i}$. This combined with the prior paragraph clearly implies that $\mathcal A \prec \cp \rtimes \G_{\hat i}$. 
	
	Since all the arguments above still work and the same conclusion holds if one replaces $\mathcal A$ by $\mathcal A a$ for any projection $0\neq a\in \mathcal A$ one actually has $\mathcal A \prec^s_\emm \cp \rtimes \G_{\hat i}$. Since this holds for every $i\in \{1,2,\cdots,n\}$, using \cite[Lemma 2.8(2)]{DHI16} one concludes that $\mathcal A \prec_\emm \cp\bten (\cap_{i=1}^n\G_{\hat i})=\cp$, as desired. \end{proof}

We get the following Cartan-rigidity results as a corollary for the classes of groups described in the above Theorem and hence in particular for class $\mathscr{S}_{m,n}$ and  class $\mathscr{S}_{\infty,n}$.
\begin{corollary} \label{cor:nocartan} Let $\Gamma$ be a group as in Theorem~\ref{uniquecartan1} and in  Theorem~\ref{uniquecartantheorem}. let $\Gamma \curvearrowright X$ be a free ergodic pmp action on a probability space. Then the following hold:
	\begin{enumerate}
		\item The crossed product $L^\infty(X)\rtimes \Gamma$ has unique Cartan subalgebra;
		\item The group von Neumann algebra $\El(\Gamma)$ has no Cartan subalgebra.
	\end{enumerate}
	
\end{corollary}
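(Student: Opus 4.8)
The plan is to derive both conclusions as essentially formal consequences of the two intertwining theorems just established, Theorem~\ref{uniquecartantheorem} (for class $\mathscr S_{m,n}$) and Theorem~\ref{uniquecartan1} (for products of relatively hyperbolic groups), both of which assert that for $\emm=\mathcal P\rtimes \G$ and any masa $\mathcal A\subseteq p\emm p$ whose normalizer $\mathscr N_{p\emm p}(\mathcal A)''$ has finite index in $p\emm p$, one has $\mathcal A\prec_{\emm}\mathcal P$. I would apply this in two specializations of $\mathcal P$, one for each assertion of the corollary; since both preceding theorems yield the same conclusion, the argument is identical for the two families of groups and I would simply invoke whichever theorem applies to the given $\G$.

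For part (1), I would take $\mathcal P=L^{\infty}(X)$ with the given free ergodic p.m.p.\ action $\G\ca X$, so that $\emm=L^{\infty}(X)\rtimes \G$ is a $\mathrm{II}_1$ factor and $\mathcal P=L^{\infty}(X)$ is a Cartan subalgebra of $\emm$. Let $\mathcal A\subseteq \emm$ be an arbitrary Cartan subalgebra. By definition its normalizer generates $\emm$, so $\mathscr N_{\emm}(\mathcal A)''=\emm$ and the finite-index hypothesis holds trivially (index one). The relevant intertwining theorem then yields $\mathcal A\prec_{\emm}L^{\infty}(X)$. Since $\mathcal A$ and $L^{\infty}(X)$ are \emph{both} Cartan subalgebras of $\emm$ and $\mathcal A\prec_{\emm}L^{\infty}(X)$, I would invoke the standard upgrade from intertwining of Cartan subalgebras to unitary conjugacy (Popa) to produce a unitary $u\in \mathscr U(\emm)$ with $u\mathcal A u^{*}=L^{\infty}(X)$. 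As $\mathcal A$ was arbitrary, this gives uniqueness of the Cartan subalgebra up to unitary conjugacy.

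For part (2), I would instead take $\mathcal P=\mathbb C$ with the trivial action, so that $\emm=\El(\G)$, which is a $\mathrm{II}_1$ factor since $\G$ is icc in both families. Suppose, for contradiction, that $\mathcal A\subseteq \El(\G)$ were a Cartan subalgebra; exactly as above $\mathscr N_{\El(\G)}(\mathcal A)''=\El(\G)$ has finite index, so the intertwining theorem gives $\mathcal A\prec_{\El(\G)}\mathbb C 1$. However, a Cartan subalgebra, being a masa in a $\mathrm{II}_1$ factor, is diffuse, and a diffuse subalgebra can never embed a corner into $\mathbb C 1$: such an intertwining would produce a normal character on the diffuse abelian algebra $p\mathcal A p$, forcing an atom, which is absurd. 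This contradiction shows that $\El(\G)$ has no Cartan subalgebra.

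The deduction is short precisely because all the analytic content sits in the preceding two theorems; the only genuinely external input I rely on is the conjugacy upgrade for intertwining Cartan subalgebras, so the main (but already completed) obstacle is the establishment of the intertwining statements themselves rather than their present application. The single point requiring a word of care is the icc-ness of $\G$ in part (2), which guarantees factoriality of $\El(\G)$ and hence the diffuseness of any candidate Cartan masa; this holds for class $\mathscr S_{m,n}$ by Notation~\ref{semidirectt} and for the relatively hyperbolic products under the standing icc assumption.
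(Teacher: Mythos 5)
Your proposal is correct and follows essentially the same route as the paper: apply the intertwining theorems with $\mathcal P=L^\infty(X)$ (resp.\ $\mathcal P=\mathbb C$), then upgrade $\mathcal A\prec_{\emm}L^\infty(X)$ to unitary conjugacy via Popa's theorem on intertwining Cartan subalgebras for part (1), and derive a contradiction with diffuseness from $\mathcal A\prec \mathbb C 1$ for part (2). The only difference is that you spell out the routine details (triviality of the finite-index hypothesis for a Cartan normalizer, icc-ness ensuring factoriality) that the paper leaves implicit.
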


\begin{proof} 1. Let $\mathcal A \subset L^\infty(X)\rtimes \Gamma=:\mathcal M$ be a Cartan subalgebra. By Theorem~\ref{uniquecartan1} and Theorem \ref{uniquecartantheorem} we have that $\mathcal A \prec_{\mathcal M}L^\infty(X)$ and since $L^\infty(X)\subseteq \mathcal M$ is Cartan then \cite[Theorem]{Po01}  gives the conclusion.
	2. If $\mathcal A \subset \El(\Gamma)$ is a Cartan subalgebra then Theorem \ref{uniquecartantheorem} implies that $\mathcal A \prec \mathbb C 1$ which contradicts that $\mathcal A$ is diffuse.\end{proof}

\begin{remark}
	By combining Theorem~\ref{uniquecartan1} and Theorem~\ref{uniquecartantheorem} one can show that the Corollary~\ref{cor:nocartan} holds for products of groups that are considered in the Corollary. The proof is essentially same and hence we leave it to the reader. 
\end{remark}

\section*{Acknowledgments}
The authors would like to thank Brent Nelson for the comment to consider more than 2 acting groups in Rips construction while the second author was giving a talk at MSU. The authors are grateful to Ionut Chifan for helpful comments and suggestions.

\noindent
\textsc{Department of Mathematics, University of California Riverside, Skye Hall 219, Riverside, CA 92521, U.S.A.}\\
\email {sdas@ucr.edu} \\
\textsc{Department of Mathematics, The University of Iowa, 14 MacLean Hall, Iowa City, IA 52242, U.S.A.}\\
\email {krishnendu-khan@uiowa.edu} \\

\end{document}